	\newtheorem{definition}{Definition}
	\newtheorem{theorem}{Theorem}
	\newtheorem{lemma}{Lemma}
	\newtheorem{cor}{Corollary}
    \newtheorem{example}{Example}
	\newtheorem{prop}{Proposition}
\newcommand{\fillBox}{\hfill$\Box$}
\setlist[enumerate,1]{leftmargin=1cm}
\setlist[itemize,1]{leftmargin=1cm}
\title{Inferentialist Public Announcement Logic}
\author*[1]{\fnm{Timo} \sur{Eckhardt}}\email{t.eckhardt@ucl.ac.uk}
\author*[2,1]{\fnm{David} \sur{Pym}}\email{david.pym@sas.ac.uk, d.pym@ucl.ac.uk}
\affil[1]{\orgdiv{University College London}, 
\orgname{University of London}, \orgaddress{\street{Gower Street}, \city{London}, \postcode{WC1E 6BT}, \country{United Kingdom}}}
\affil[2]{\orgdiv{Institute of Philosophy, School of Advanced Study}, \orgname{University of London}, \orgaddress{\street{Malet St}, \city{London}, \postcode{WC1E 7HU}, \country{United Kingdom}}}
\begin{document}

\maketitle

\begin{abstract}
.Proof-theoretic semantics, and base-extension semantics in particular, can be seen as a logical realization of inferentialism, in which the meaning of expressions is understood through their use. We present a base-extension semantics for public announcement logic, building on earlier 
work giving a base-extension semantics for the modal logic $S5$, which 
in turn builds on earlier such work for $K$, $KT$, $K4$, and $S4$. 
These analyses rely on a notion of `modal relation' on bases.  
The main difficulty in extending the existing B-eS for $S5$ to public announcement logic is to account announcements of the form $[\psi]\phi$, 
which, in this setting, update the modal relations on bases. 
We provide a detailed analysis of two classical examples, namely the three-player card game and the muddy children puzzle. These examples 
illustrate how the inferentialist perspective requires fully explicit 
information about the state of the participating agents. 
\end{abstract}

\medskip 

\noindent Keywords: public announcement logic, modal logic, proof-theoretic semantics, base-extension semantics 

\section{Introduction} \label{sec:introduction}


Proof-theoretic semantics (P-tS) is a 
logical realization of inferentialism \cite{brandom2009}, within which the meaning of expressions is understood through their use. It is an approach to semantics based on \emph{proof} --- as opposed to truth --- where `proof' means `valid argument' --- as opposed to derivation in a fixed system.  This includes both the semantics \emph{of} proofs (i.e., validity conditions on `arguments'; e.g., \cite{Dummett1991,Prawitz1971,Prawitz2006,SchroederHeister2006,Schroeder2007modelvsproof} among much more) and semantics \emph{in terms of} proofs (i.e., the meaning of logical constants in terms of consequential relationships; e.g., \cite{Sandqvist2005,Sandqvist2009,Sandqvist2015,piecha2017definitional} among much more). We call the first \emph{proof-theoretic validity} (P-tV) and the second \emph{base-extension semantics} (Be-S). This nomenclature follows from traditions in the literature, but both branches concern \emph{validity} and make use of \emph{base-extensions} in their set-up. 

Here we are concerned with B-eS, which establishes a concept of validity for formulae 
that is grounded in atomic inference. That is, in the simplest case, we take a \emph{base} $\mathscr{B}$ that is a (countable) set of atomic rules written in the form 
\[
    \frac{p_1 \ldots p_n}{p} \quad \mbox{or} \quad 
    p_1, \dots, p_n \Rightarrow p
\]
where $p$ and each $p_i$ is an atom (and $n$ may be $0$). We then define a notion of validity relative to 
$\mathscr{B}$ for formulae $\phi$, $\Vdash_{\mathscr{B}} \phi$, by an inductive definition in which the base case is of the form 
\[
   \begin{array}{rcl}
        \Vdash_{\mathscr{B}} p & \mbox{iff} & \mbox{$\vdash_{\mathscr{B}} p$} 
   \end{array}
\]
where $\vdash_{\mathscr{B}}$ denotes provability using the rules of $\mathscr{B}$. 
The meaning of complex formulae is then defined inductively in a familiar way; for example, 
\[
   \begin{array}{rcl}
        \Vdash_{\mathscr{B}} \phi \wedge \psi & \mbox{iff} & \mbox{$\Vdash_{\mathscr{B}} \phi$ and $\Vdash_{\mathscr{B}} \psi$} 
   \end{array}
\]
See, for example,  \cite{Sandqvist2005,Sandqvist2009,Sandqvist2015,Piecha2019,Piecha2016} for more about the basics of B-eS.

Specifically, this paper is concerned with B-eS for public announcement logic (PAL) (see \cite{Baltag1998,Ditmarsch2007}). It builds directly on two previous papers. 
In the first \cite{Eckhardt2024}, a base-extension semantics for the family of logics $K$, $KT$, $K4$, and $S4$ was presented by adding a modal accessibility relation between bases that is used to define the validity of modal formulae similarly to Kripke semantics. A set of conditions for modal relations 
that ensure soundness and completeness is given for the logics above. It is also shown that the conditions given in this paper allow for a Euclidean relation on which the corresponding axiom fails. In the second \cite{Eckhardt2024}, this work was extended to multi-agent $S5$, by adapting the conditions and allowing for multiple modal relations. 


The language of PAL, in the absence of common knowledge, is that of classical propositional logic together with epistemic modalities 
$K_a$ and announcements  $[\psi]$. The modality 
$K_a$ is an $S5$ modality corresponding to an agent $a$ and a formula of the form $K_a \phi$ is read as `$a$ knows $\phi$'. Public announcements are truthful, in the sense that the announced formula has to be true at the time of announcing, and public, in the sense that every agent is aware of the announcement taking place. Announcements (and similar epistemic actions) are generally treated {\it dynamically} as an update operation that takes us from a model that represents the initial situation to one that a model of the situation after the announcement. An announcement formula $[\psi]\phi$ is read as $\phi$ holds after the announcement of $\psi$.  

In the present paper, we extend the base-extension analysis of $S5$ to PAL, the modal logic of knowledge, belief, and 
public communication --- and perhaps the most basic dynamic epistemic 
logic (DEL). This represents, we suggest, a step towards considering the 
value of P-tS in the setting of logics motivated by, and deployed in, 
the analysis of knowledge and public communication. We do so by adding dynamic components to the inferentialist account of $S5$.

The main difficulty in extending the existing B-eS for $S5$ is to account for PAL's announcements of the form $[\psi]\phi$. As already mentioned, in \cite{EckhardtPymS5}, a multi-agent $S5$ that defines the validity of epistemic formula (like $K_a \phi$) in reference to relations between bases in much the same way as modalities are handled in Kripke semantics has been defined, however with further restrictions on our relations (beyond the frame conditions) to ensure that these relations behave appropriately given that we also have the subset relation. Such relations are called {\it modal relations}. Announcements are usually treated by an update of the model. In Kripke semantics, for example, an announcement of $\phi$ simply causes all $\neg \phi$ worlds to be discarded from the model. In the Be-S case, the set of bases cannot be changed so simply and we have to restrict our updates to the relations between bases. This will require a more sophisticated update semantics given that we have to ensure that the updated relations are still modal relations. As we will see below, this will cause public announcements to no longer be general in the sense that the same announcement can produce different modal relations if it is updated at different bases (more on that later).


We use this B-eS for PAL to model two established and  illustrative examples from the DEL literature: a three-player card game and the `muddy children' puzzle. We contrast how these are these modelled in the inferentialist setting provided by B-eS with how these examples are treated in the established Kripke semantics for PAL. 

The B-eS modelling allows for a more intensional approach to these examples in which bases are not chosen just because of the formulae that are supported by them line up with the situation we want to model, but rather they are chosen such that their rules correspond directly, in a rather strong sense that we discuss in Section~\ref{sec:ExamplesRevisited}, to the example at hand. This correspondence manifests in two ways in the examples: first, it allows the rules of a game to be implemented  directly as base rules; second, it allows the information required for the execution of the model to be identified by looking at the minimal set-ups necessary for the agents in the examples to be able to carry out the expected reasoning. We use this observation as a point of departure for considering briefly, in Section~\ref{sec:conclusion}, some preliminary ideas on the connection between inferentionalism, intensionality, and information. The development of these ideas would be a substantial further project.   

In Section~\ref{sec:Background}, we introduce $S5$ modal logic and PAL together with their Kripke semantics. In Section~\ref{sec:Examples} we discuss some classic examples of public announcements: the three-player card game and the muddy children puzzle. 
In Section~\ref{sec:B-eS-S5}, we review the B-eS developed for $S5$ in \cite{EckhardtPymS5}. This set-up forms the starting point for the B-eS of PAL, which we develop in Section~\ref{sec:B-eS-PAL}. In Section~\ref{sec:ExamplesRevisited}, we revisit the examples of the B-eS of PAL. These examples reveal how the inferentialist perspective, as 
formulated in terms of P-tS, expose the need for an 
explicit representation of all of the required assumptions. Finally, in Section~\ref{sec:conclusion}, 
we discuss possible future work on (other) dynamic 
epistemic logics and the informational content of B-eS 
for such logics. 

\section{Background} 
\label{sec:Background}

\subsection{$S5$ and its Kripke Semantics}
\label{sec:Kripke-intro}


We define a Kripke semantics for multi-agent $S5$ modal logic (an efficient, yet thorough, presentation of $S5$ in the context of epistemic logic may be found in \cite{Meyer_Hoek_1995}). We also give a corresponding axiomatic proof system. As we interpret the modal operator epistemically, we write $K_a$ to denote the $\square_a$-operator corresponding to an agent $a$. Accordingly, a formula $K_a \phi$ is read as `agent $a$ knows that $\phi$'.

A formula $K_a \phi$ holds at a world $w$ iff $\phi$ holds at all worlds $v$ such that $R_a wv$, where $R_a$ is a (reflexive, transitive, and Euclidean) relation between worlds. The natural way of interpreting this relation is as indistinguishability between agents for the agent $a$, that is, agent $a$ at world $w$ cannot tell whether world $w$ or $v$ is the actual world. Therefore, $K_a\phi$ holds at a world $w$ iff at all worlds agent $a$ knows are possible $\phi$ holds.

\medskip 

\begin{definition} 
\label{def:KripkeLanguage}
For a set of atomic formulae $P$, atomic formulae $p\in P$ and agents $a$, the language  for modal logic $S5$ is generated by the following grammar:
\[
    \phi ::= p \mid \bot \mid \neg \phi \mid \phi\to\phi \mid K_a \phi 
\] 
\fillBox 
\end{definition} \medskip

In Kripke semantics, validity is defined with reference to specific worlds and models at which the formula is evaluated. We define models in the usual manner. 

\medskip 

\begin{definition}
    \label{def:model}
    A frame is a pair $F = \langle W, R_A\rangle$ in which $W$ is a set of possible worlds and $R_A$ a set of binary relations on $W$, one for each agent $a\in A$. A model $M= \langle F,V\rangle$ is a pair of a frame $F$ and a valuation function $V: P \rightarrow \mathcal{P}(W)$ giving for every propositional $p$ the set of worlds in which $p$ is true:  $V(p) \subseteq W$ for every $p$.
    The relation $R$ is called an $S5$-relation if it is an equivalence relation (i.e., reflexive, transitive, and Euclidean). An $S5$-frame is a frame $F = \langle W, R_A\rangle$ in which all $R_a \in R_A$ are $S5$-relations and an $S5$-model is a model $M=\langle F,V\rangle$ in which $F$ is an $S5$-frame. 
    \fillBox
\end{definition} \medskip

Given this, we can give the usual truth conditions for formulae at a world. The validity of a formula is then defined by its being true at all worlds in every model. As we are talking about $S5$ modal logic, we restrict this to $S5$-models.

\medskip 

\begin{definition}
    \label{KripkeValidity}
    Let $F = \langle W, R\rangle$ be an $S5$-frame, $M=\langle F,V\rangle$ be an $S5$-model, and $w\in W$ be a world. That a formula $\phi$ is true at $(M,w)$ --- denoted $M,w\vDash \phi$ --- is defined as follows:

\[
\begin{array}{l@{\quad}c@{\quad}l}
M,w\vDash p   & \mbox{iff} & w\in V(p) \\
M,w\vDash \phi\to\psi & \mbox{iff} & \mbox{if $M,w\vDash \phi$, then $M,w\vDash \psi$} \\ 
M,w\vDash \neg \phi & \mbox{iff} & \mbox{not $M,w\vDash \phi$} \\ 
M,w\vDash \bot & \mbox{iff} & \mbox{never} \\
M,w\vDash K_a \phi & \mbox{iff} & \mbox{for all $v$ s.t. $R_a wv$, $M,v\vDash \phi$}\\
\end{array} 
\]

    







\noindent If a formula $\phi$ is true at all worlds in a model $M$, we say $\phi$ is true in $M$. A formula is valid iff it is true in all models.
\fillBox
\end{definition} \medskip

This concludes the Kripke semantics for the epistemic logic underlying PAL. We give an axiomatic proof system that is sound and complete with respect to this semantics. Note  that the axioms $T$, $4$, and $5$ correspond to the relation being reflexive, transitive, and Euclidean, respectively. Axiomatic systems for other modal logics can be obtained by only using the corresponding axioms for that relation. 

\medskip 

\begin{definition} \label{def:S5Axioms}
    The proof system $\vdash^{S5}$ for the modal logic $S5$, with $a \in A$ is given by the following axioms and rules:

    \[
    \begin{array}{l@{\quad}l}
    \phi\to(\psi\to\phi) & \mbox{\emph{(1)}} \\\relax
    (\phi\to(\psi\to\chi))\to((\phi\to\psi)\to(\phi\to\chi)) & \mbox{\emph{(2)}} \\\relax
    (\neg\phi\to\neg\psi)\to (\psi\to\phi)  & \mbox{\emph{(3)}} \\\relax
    K_a(\phi\to\psi)\to(K_a\phi\to K_a\psi)  & \mbox{\emph{(K)}} \\\relax   
    K_a \phi\to\phi  & \mbox{\emph{(T)}} \\\relax   
    K_a \phi \to K_a K_a\phi  & \mbox{\emph{(4)}} \\\relax   
    \neg K_a \phi\to K_a\neg K_a\phi  & \mbox{\emph{(5)}} \\

& \\
    
    \mbox{If $\phi$ and $\phi\to\psi$, then $\psi$} & \mbox{Modus Ponens \emph{(MP)}}\\\relax
    \mbox{If $\psi$, then $K_a \psi$} & \mbox{Necessitation \emph{(NEC)}}
    \end{array}\]

    \noindent A formula is provable in S5, written $\vdash^{S5} \phi$, if it follows from these axioms and rules. Just the axioms \emph{(1)--(3)} together with the rule \emph{(MP)} constitute a proof system for classical logic. Proof systems for the modal logics other than $S5$ are obtained by taking only the corresponding axioms (e.g., modal logic $K4$ results from adding \emph{(K)}, \emph{(4)}, and \emph{(NEC)} to the classical proof system). \fillBox 
\end{definition} \medskip

\subsection{Public Announcement Logic}
\label{sec:PAL-intro}

To obtain the language for PAL, the language of multi-agent $S5$ modal logic is enriched with announcement operators of the form $[\phi]\psi$, which are read as `after a (truthful and public) announcement of $\phi$, $\psi$ holds'. Note that $[\phi]$ is a box-like operator and so it should read `after {\it every} announcement of $\phi$'; however, these two readings are equivalent as announcements are partial functions (i.e., there is at most one announcement of $\phi$)\footnote{This will no longer be the case in our base-extension semantics (see Section \ref{sec:B-eS-PAL}}.

\medskip 

\begin{definition}
    For a set of atomic formulae $P$, atomic formulae $p \in P$ and agents $a$, the language of $PAL$ is generated by the following grammar:
    \[
    \phi := p \mid \bot \mid  \neg \phi \mid \phi\to\phi \mid K_a \phi \mid [\phi]\phi
    \]
\fillBox
\end{definition} \medskip

The Kripke semantics of PAL is obtained by enriching that of multi-agent $S5$ with an update operation that restricts the worlds to only those in which the announced formula is true. Announcing a formula is therefore seen as a 
model-transformation operation that results in a new Kripke model representing the state of knowledge of the agents after the announcement takes places.

Following that idea, we write $M|\phi$ for the model corresponding to $M$ restricted to worlds at which $\phi$ holds.

\medskip 

\begin{definition}
    For any model $M= \langle F, V\rangle$ with $F=\langle W,R\rangle$ and formula $\phi$, we define $M|\phi = \langle F', V'\rangle$ with $F'=\langle W', R'\rangle$ in the following way:
\[    \begin{array}{l@{\quad}c@{\quad}l}   
W' & = & \{w \mid w\in W \mbox{ and } M,w\vDash \phi \}\\
R' & = & R \cap (W' \times W')\\
V(p)' & = & V(p) \cap W'\\
    \end{array}
\]
\fillBox
\end{definition} \medskip

Given this, we can define validity, with a formula of the form $[\phi]\psi$ being true at a world if $\psi$ is true at that world after the announcement of $\phi$. We add this condition to out truth conditions in Definition \ref{KripkeValidity}.

\medskip 

\begin{definition} \label{KripkePALValidity}
    Let $F = \langle W, R\rangle$ be an $S5$-frame, $M=\langle F,V\rangle$ an $S5$-model and $w\in W$ a world. That a formula $\phi$ is true at $(M,w)$ --- denoted $M,w\vDash \phi$ --- is defined as follows:
\[
\begin{array}{l@{\quad}c@{\quad}l}
M,w\vDash p   & \mbox{iff} & w\in V(p) \\
M,w\vDash \phi\to\psi & \mbox{iff} & \mbox{if $M,w\vDash \phi$, then $M,w\vDash \psi$} \\ 
M,w\vDash \neg \phi & \mbox{iff} & \mbox{not $M,w\vDash \phi$} \\ 
M,w\vDash \bot & \mbox{iff} & \mbox{never} \\
M,w\vDash K_a \phi & \mbox{iff} & \mbox{for all $v$ s.t. $R_a wv$, $M,v\vDash \phi$}\\
M,w\vDash [\phi]\psi & \mbox{iff} & \mbox{$M,w\vDash \phi$ implies $M|\phi,w\vDash\psi$}
\end{array} 
\]

    







\noindent If a formula $\phi$ is true at all worlds in a model $M$, we say $\phi$ is true in $M$. A formula is valid iff it is true in all models.
\fillBox 
\end{definition} \medskip

A sound and complete proof-system for PAL is given by the following axiomatic system: 

\medskip 

\begin{definition}
We add the following axioms to the axiomatic system for $S5$ modal logic in Definition \ref{def:S5Axioms}:

    \[
    \begin{array}{l@{\quad}l}
    [\phi]p \leftrightarrow \phi\to p & \mbox{atomic permanence} \\\relax
    [\phi]\bot\leftrightarrow \phi\to\bot & \mbox{announcement and bot} \\\relax
    [\phi](\psi\to\chi)\leftrightarrow [\phi]\psi \to [\phi]\chi  & \mbox{announcement and implication} \\\relax
    [\phi]K_a \psi\leftrightarrow \phi\to K_a[\phi]\psi  & \mbox{announcement and knowledge} \\\relax   
    [\phi][\psi]\chi\leftrightarrow [\phi\wedge[\phi]\psi]\chi  & \mbox{announcement composition} \\

    \mbox{If $\psi$, then $[\phi]\psi$} & \mbox{necessitation of announcements}
    \end{array}\]
    A formula is provable in PAL, written $\vdash \phi$, if it follows from these axioms and rules. 
    \fillBox
\end{definition} \medskip

\section{Examples of Public Announcements} \label{sec:Examples}

It is useful here to discuss some examples of how the reasoning of PAL can be applied. This shows how the standard semantics for PAL works and will allow us to highlight, when 
we return to the examples in Section~\ref{sec:ExamplesRevisited}, the differences to the base-extension semantics treatment of PAL developed later in this paper.

The examples discussed in this paper and their treatment in the standard semantics is taken from \cite{Ditmarsch2007}.

Our first example will be a simple card game between three agents and our second example, the most famous puzzle concerning PAL, is the muddy-children puzzle. We have chosen these examples as they highlight different aspects of how informational content is encoded in our semantics. 

\subsection{Three-player Card Game}
\label{subsec:standard3pcg}
\begin{example}[Card game]
    \label{ex:cardgame1}

    Three players, Anne, Bob, and Cath, each draw a card from a deck of three cards, 0, 1, and 2. This is known among the agents. Anne draws 0, Bob 1, and Cath 2. Only the players themselves know which card they have drawn. Anne announces `I do not have card 1'.
\end{example}

Before it is decided who draws which card, there are six possible distributions of the card draw. We write, for example, 012 for the case in which Anne draws 0, Bob draws 1, and Cath 2 (i.e., the actual case in our example).

However, before looking at the thought processes of Anne, Bob, and Cath, we need to analyse the framework in which the game takes place. There are some important rules that must be obeyed in interpretation of the example. Every agent draws exactly one card. By the rules of the game, no agent can draw two cards or no card at all. Conversely, every card is drawn by exactly one agent. There are no cards left out of the deck and there are no multiples of any card. Lastly, every agent can see their own card but not the cards of the other agents.

Given these rules, we can start drawing some conclusions. For every agent, there are certain draws between which they cannot decide. For example, Anne cannot decide between 012 and 021, as they can only say that they themselves have 0 and that Bob and Cath each have one of the other two cards. For the same reasons, Cath cannot decide between 012 and 102. In fact, these are the only draws she considers possible at 012.

Now we go forward to Anne's announcement `I do not have card 1'. This announcement contradicts 102 and so Cath no longer considers it. This leaves 012 as the only possible draw Cath considers and they now know exactly who has which card.

In Kripke semantics, this example can be modelled in a straightforward way: we simply take a model with six worlds, one for each of the possible draws, and define relations for each of the agents that fit the idea that the agents only know their own card. Let 012 be the world that corresponds to that specific draw and $R_a$ Anne's relation, $R_b$ Bob's, etc.. So we want our initial model to have $R_a 012\; 021$ and so on. Given this, we can draw a model $M$ that encompasses all possible draws (Figure \ref{KripkeCardgame1}).

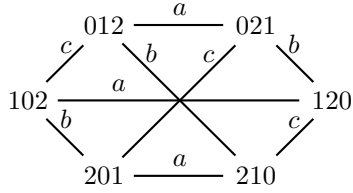
\begin{figure}[ht]
\begin{center}

\begin{tikzpicture}[-,shorten >=1pt,node distance =2cm, thick, baseline={([yshift=-1.8ex]current bounding box.center)}] 

\node (A) at (0,1) {$102$};
\node (B) at (1,0) {$201$};
\node (C) at (1,2) {$012$};
\node (D) at (4,1) {$120$};
\node (E) at (3,0) {$210$};
\node (F) at (3,2) {$021$};

\draw[-,near start, above] (A) to node {$a$} (D);
\draw[-,above] (B) to node {$a$} (E);
\draw[-,above] (C) to node {$a$} (F);

\draw[-,above] (A) to node {$b$} (B);
\draw[-,near start, above] (C) to node {$b$} (E);
\draw[-,above] (D) to node {$b$} (F);

\draw[-,above] (A) to node {$c$} (C);
\draw[-,near end, above] (B) to node {$c$} (F);
\draw[-,above] (D) to node {$c$} (E);

\end{tikzpicture}

\caption{\label{KripkeCardgame1} The initial Kripke model $M$ for the card game example.}
\end{center}
\end{figure}

Following \cite{Ditmarsch2007}, we use $N_i$ for the atomic sentences so that `agent $i$ has card $N$', for example at world $012$, we have $0_a, 1_b,$ and $2_c$. Note that the way our model is set-up guarantees that the rules of the game, as discussed above, hold at all the worlds. 

Anne's announcement of `I do not have card 1' corresponds to $\neg 1_a$. So the announcement, as expected, will restrict our model to those worlds at which Anne does not have card $1$, meaning we delete the worlds $102$ and $120$. Similarly, the relation is restricted to only the remaining worlds. This results in the model $M'=M|\neg1_a$ of Figure \ref{KripkeCardgame2}.

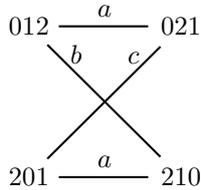
\begin{figure}[ht]
\begin{center}

\begin{tikzpicture}[-,shorten >=1pt,node distance =2cm, thick, baseline={([yshift=-1.8ex]current bounding box.center)}] 

\node (B) at (1,0) {$201$};
\node (C) at (1,2) {$012$};
\node (E) at (3,0) {$210$};
\node (F) at (3,2) {$021$};

\draw[-,above] (B) to node {$a$} (E);
\draw[-,above] (C) to node {$a$} (F);

\draw[-,near start, above] (C) to node {$b$} (E);

\draw[-,near end, above] (B) to node {$c$} (F);

\end{tikzpicture}

\caption{\label{KripkeCardgame2} The Kripke model $M'$ for the card game example after Anne's announcement `I do not have card 1'.}
\end{center}
\end{figure}

Given this we can now show that after Anne's announcement Cath has enough information to identify the draw that took place. We can see that $M',012\models K_c (0_a \wedge 1_b\wedge 2_c)$ and so $M,012 \models [\neg 1_a] K_c (0_a \wedge 1_b\wedge 2_c)$.

As we can see, Anne's announcement leads to a model in which Cath can determine 
who has which card, just as we expected.

\subsection{The Muddy Children Puzzle}
\label{subsec:standardmcp}

Again, like with the card game example, we start by informally describing the muddy children puzzle and then show how this puzzle can be solved using Kripke models.

\medskip 

\begin{example}[Muddy Children]
\label{ex:muddychildren}
    A group of children playing outside are called by their father. Some of them have become dirty from playing outside and they have mud on their foreheads. The children cannot see their own forehead but the foreheads of all the other children. The children are, of course, perfect logicians.

    The father proclaims: `At least one of you has mud on their forehead. Will those with mud on their forehead come forward to get cleaned?'. The children do not like being cleaned and will avoid so if possible (i.e., they do not have muddy foreheads). If no child steps forward the father will repeat this request until the muddy children step forward.

    If $m$ of the $n$ children are muddy, the children will step forward after the father has made his request $m$ times.

\end{example}

    We are going to focus on the case in which there are three children, two of which are muddy. Let us call the children Anne, Bob, and Cath as in the card game example above and let Anne and Bob be muddy. This gives us a specific example to reason through and the general case follows the exact same reasoning. 

    So how are the children capable of finding out whether they are muddy or not without seeing their own foreheads? Let us take Anne's point of view (Bob's, of course, will be analogous). Anne sees that Bob is muddy and that Cath is not muddy. The father now states that at least one of them is muddy. So Anne considers that Bob can also see that Cath is not muddy and so if Anne was not muddy, Bob would realize that they were the only muddy one. When the father repeats his request without Bob having stepped forward, Anne realizes that Bob has to see another muddy person. That can only be Anne herself though and so Anne (and Bob) step forward as they now know that they are the muddy ones.

    Similarly to the card game example, we can represent this using Kripke models and by treating the actions of the father and the children as public announcements. The atomic formulae are $m_a, m_b$, and $m_c$ for Anne, Bob and Cath are muddy, respectively. Given that we have three atomic formulae we can represent the different worlds in a cube (see Figure \ref{KripkeMC1}).

    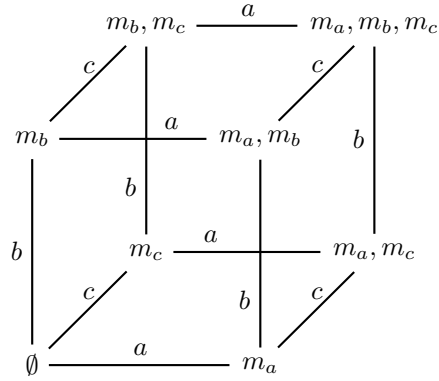
\begin{figure}[ht]
\begin{center}

\begin{tikzpicture}[-,shorten >=1pt,node distance =2cm, thick, baseline={([yshift=-1.8ex]current bounding box.center)}] 

\node (A) at (0,0) {$\emptyset$};

\node (C) at (0,3) {$m_b$};
\node (D) at (1.5,1.5) {$m_c$};
\node (B) at (3,0) {$m_a$};

\node (E) at (3,3) {$m_a,m_b$};
\node (F) at (4.5,1.5) {$m_a,m_c$};
\node (G) at (1.5,4.5) {$m_b,m_c$};

\node (H) at (4.5,4.5) {$m_a,m_b,m_c$};

\draw[-,above] (A) to node {$a$} (B);
\draw[-,left] (A) to node {$b$} (C);
\draw[-,above] (A) to node {$c$} (D);

\draw[-, near start,left] (B) to node {$b$} (E);
\draw[-,above] (B) to node {$c$} (F);

\draw[-,near end, above] (C) to node {$a$} (E);
\draw[-,above] (C) to node {$c$} (G);

\draw[-,near start, above] (D) to node {$a$} (F);
\draw[-,near start, left] (D) to node {$b$} (G);

\draw[-,above] (E) to node {$c$} (H);
\draw[-,left] (F) to node {$b$} (H);
\draw[-,above] (G) to node {$a$} (H);

\end{tikzpicture}

\caption{\label{KripkeMC1} The initial Kripke model for the muddy children puzzle ahead of the father's announcements.}
\end{center}
\end{figure}

We can model the father's announcement that some of the children are muddy by an announcement of $m_a\vee m_b \vee m_c$. For readability, let $\phi= m_a\vee m_b\vee m_c$. This removes the world in which no child is muddy and all relations between it and the other worlds (see Figure \ref{KripkeMC2}).

    \begin{figure}[ht]
\begin{center}

\begin{tikzpicture}[-,shorten >=1pt,node distance =2cm, thick, baseline={([yshift=-1.8ex]current bounding box.center)}] 


\node (C) at (0,3) {$m_b$};
\node (D) at (1.5,1.5) {$m_c$};
\node (B) at (3,0) {$m_a$};

\node (E) at (3,3) {$m_a,m_b$};
\node (F) at (4.5,1.5) {$m_a,m_c$};
\node (G) at (1.5,4.5) {$m_b,m_c$};

\node (H) at (4.5,4.5) {$m_a,m_b,m_c$};


\draw[-, near start,left] (B) to node {$b$} (E);
\draw[-,above] (B) to node {$c$} (F);

\draw[-,near end, above] (C) to node {$a$} (E);
\draw[-,above] (C) to node {$c$} (G);

\draw[-,near start, above] (D) to node {$a$} (F);
\draw[-,near start, left] (D) to node {$b$} (G);

\draw[-,above] (E) to node {$c$} (H);
\draw[-,left] (F) to node {$b$} (H);
\draw[-,above] (G) to node {$a$} (H);

\end{tikzpicture}

\caption{\label{KripkeMC2} The Kripke model for the muddy children puzzle after the announcement of $m_a\vee m_b\vee m_c$ (i.e., that some children are muddy).}
\end{center}
\end{figure}
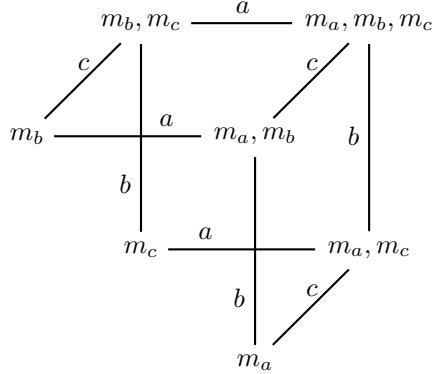

Now the father repeats his request for the children to get cleaned. Above we treated the children not stepping forward to get cleaned as an announcement that they do not know whether they are muddy or not. This corresponds to the announcement of the formula $\psi =\neg (K_a m_a\vee K_a \neg m_a) \wedge \neg (K_b m_b\vee K_b \neg m_b) \wedge \neg (K_c m_c\vee K_c \neg m_c)$. So, when the children fail to step up after the first request by the father, we update our model of Figure \ref{KripkeMC2} with that announcement. This results in Figure \ref{KripkeMC3}.

    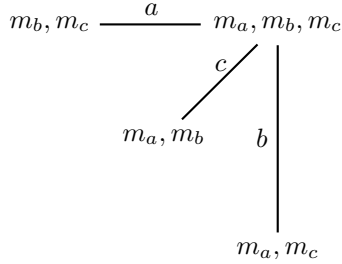
\begin{figure}[ht]
\begin{center}

\begin{tikzpicture}[-,shorten >=1pt,node distance =2cm, thick, baseline={([yshift=-1.8ex]current bounding box.center)}] 



\node (E) at (3,3) {$m_a,m_b$};
\node (F) at (4.5,1.5) {$m_a,m_c$};
\node (G) at (1.5,4.5) {$m_b,m_c$};

\node (H) at (4.5,4.5) {$m_a,m_b,m_c$};





\draw[-,above] (E) to node {$c$} (H);
\draw[-,left] (F) to node {$b$} (H);
\draw[-,above] (G) to node {$a$} (H);

\end{tikzpicture}

\caption{\label{KripkeMC3} The Kripke model for the muddy children puzzle after the father has to repeat his request (modelled by the announcement of $\neg (K_a m_a\vee K_a \neg m_a) \wedge \neg (K_b m_b\vee K_b \neg m_b) \wedge \neg (K_c m_c\vee K_c \neg m_c)$)}
\end{center}
\end{figure}

In the model of Figure \ref{KripkeMC3}, we can see that at the world with $m_a, m_b$ Anne and Bob do not have any other worlds they consider and so $K_a m_a$ and $K_b m_b$. So from our initial model $[\phi][\psi] K_a m_a \wedge K_b m_b$ holds. Cath, of course, still believes it is possible that they themselves are muddy as well, as they still think all of them might be muddy (at least until Anne and Bob step forward).

Note that this reasoning is not depending on the number of total children. Regardless of $n$, this reasoning allows the $m$ actually muddy children to realize that they are muddy in $m$ steps, as every repetition of the father's request will remove those worlds with the least muddy children until there are no more alternatives to the actual world.

\section{Base-extension Semantics for $S5$} \label{sec:B-eS-S5}


We now give a base-extension semantics for multi-agent $S5$, as presented in \cite{EckhardtPymS5}. This approach builds on that for the systems $K$, $KT$, $K4$, and $S4$, as presented in \cite{Eckhardt2024}. The correctness of base-extension semantics for $S5$ is established relative to its Kripke semantics \cite{EckhardtPymS5}. This is the starting point from which we develop the base-extension semantics for PAL.

\medskip 

\begin{definition}
    For atomic formulae $p$ and agents $a$, the language for the epistemic modal logic is generated by the following grammar:

    \[
    \phi := p \mid \bot \mid \phi\to\phi \mid K_a \phi
    \]
\fillBox 
\end{definition} \medskip

Base rules and bases build the foundation of validity: we must to define them before moving to validity conditions. 

\medskip 

\begin{definition} \label{ClassicalRule}
A \textit{base rule} for a countably infinite set of propositional atoms $P$ is a pair $(L_j, p)$, where $L_j  = \{p_1, \dots, p_n \}$, is a finite (possibly empty) set of atomic formulae and $p$ is also an atomic formula in $P$. Generally, a base rule will be written as $p_1, \dots, p_n \Rightarrow p$ ($\Rightarrow p$ for axioms). A \emph{base} $\mathscr{B}$ is any countable collection of base rules. Given a set of atoms $P$, we call the set of all bases $\Omega_P$. $\overline{\mathscr{B}}$ is the closure of the empty set under the rules in $\mathscr{B}$.
A base is called \emph{inconsistent} for a set of atoms $P$, if $p\in \overline{\mathscr{B}}$ for all $p\in P$. All other bases are \textit{consistent}.
To avoid clutter, we generally omit stating $P$. \fillBox 
\end{definition} \medskip

Before we define validity, we define the notion of \emph{modal relation}. We impose four conditions on modal relations. Conditions (a) and (b) separate consistent and inconsistent bases. Condition (c) guarantees that if there is a relation from a base $\mathscr{B}$ to base $\mathscr{C}$, then for every consistent superset base of $\mathscr{B}$, there is a superset base of $\mathscr{C}$ between which the relation holds. Analogously, condition (d) guarantees that if there is a relation from a base $\mathscr{B}$ to base $\mathscr{C}$, then for every subset base of $\mathscr{B}$, there is a subset base of $\mathscr{C}$ between which the relation holds. So (c) and (d) can be thought of as fixing the interaction between the modal relations and the subset relation. The conditions are illustrated in Figures \ref{11(ab)}, \ref{11(c)}, and \ref{11(d)}. See \cite{EckhardtPymS5} for a more detailed discussion of these conditions.

\medskip 

\begin{definition} \label{modalRelation}
A relation $\mathfrak{R}_a$ on the set of bases $\Omega_P$ is called a  $S5$-modal relation iff, for all $\mathscr{B}$,


\begin{enumerate}[label=(\alph*)]
    \item if $\mathscr{B}$ is inconsistent, then there is a $\mathscr{C}$ s.t. $\mathfrak{R}_a\mathscr{B}\mathscr{C}$ and $\mathscr{C}$ is inconsistent and, for all $\mathscr{D}$, $\mathfrak{R}_a\mathscr{B}\mathscr{D}$ implies that $\mathscr{D}$ is also inconsistent. 

    \item if $\mathscr{B}$ is consistent, then for all $\mathscr{C}$, s.t. $\mathfrak{R}_a \mathscr{B}\mathscr{C}$, $\mathscr{C}$ is also consistent

    \item for all $\mathscr{C}$, if $\mathfrak{R}_a\mathscr{B}\mathscr{C}$, then, for all consistent $\mathscr{D}\supseteq \mathscr{B}$, there is an $\mathscr{E}\supseteq\mathscr{C}$ s.t. $\mathfrak{R}_a\mathscr{D}\mathscr{E}$

    \item for all consistent $\mathscr{C}$, if $\mathfrak{R}_a\mathscr{B}\mathscr{C}$, then, for all $\mathscr{D}\subseteq\mathscr{B}$, there are $\mathscr{E}\subseteq\mathscr{C}$ s.t. $\mathfrak{R}_a\mathscr{D}\mathscr{E}$.


\end{enumerate}

\noindent A modal relation $\mathfrak{R}$ is called a $S5$-modal relation iff $\mathfrak{R}$ is reflexive, transitive, and Euclidean. \fillBox 
\end{definition} \medskip

We define validity at a base given a set of model relations. As mentioned in Section \ref{sec:introduction}, validity is defined inductively with the base case for atoms is defined in terms of provability from the rules in the base. The modal relations are used for formulae of the form $K_a \phi$ in a manner analogous to Kripke semantics. General validity is defined as being valid at all bases given any modal relation.

\medskip 

\begin{definition} \label{EXTValidity}

For the epistemic modal logic, we define validity at a base $\mathscr{B}$ given a set of agents $A$, a set of atomic formulae $P$ and a set of $S5$-modal relations $\mathfrak{R}_a$, one for every agent $a \in A$, denoted by $\mathfrak{R}_A$ as follows:
\[
\begin{array}{l@{\quad}c@{\quad}l}
\Vdash_{\mathscr{B},\mathfrak{R}_A} p   & \mbox{iff} & \mbox{$p$ is in every set of basic sentences closed under $\mathscr{B}$} \\  
        & & \mbox{(i.e., iff $p\in \overline{\mathscr{B}}$)} \\
       
\Vdash_{\mathscr{B},\mathfrak{R}_A} \phi\to\psi & \mbox{iff} & \mbox{$\phi\Vdash_{\mathscr{B},\mathfrak{R}_A} \psi$} \\ 
\Vdash_{\mathscr{B},\mathfrak{R}_A} \bot & \mbox{iff} & \mbox{$\Vdash_{\mathscr{B},\mathfrak{R}_A} p$ for every basic sentence $p\in P$} \\
\Vdash_{\mathscr{B},\mathfrak{R}_A} K_a \phi & \mbox{iff} & \mbox{for all $\mathscr{C}$ s.t. $\mathfrak{R}_a\mathscr{B}\mathscr{C}$, $\Vdash_{\mathscr{C},\mathfrak{R}_A} \phi$}\\
& & \\
 \mbox{For non empty $\Gamma$:} & &\\
\Gamma\Vdash_{\mathscr{B},\mathfrak{R}_A} \phi & \mbox{iff} & \mbox{for all $\mathscr{C}\supseteq \mathscr{B}$, if $\Vdash_{\mathscr{C},\mathfrak{R}_A} \psi$ for all $\psi \in \Gamma$, then $\Vdash_{\mathscr{C},\mathfrak{R}_A} \phi$}\\ 
\end{array} 
\]








\noindent Given a set of agents $A$ and a set of atomic formulae $P$, a formula $\phi$ is \textit{valid}, written as $\Vdash^{A} \phi$, iff $\Vdash_{\mathscr{B}, \mathfrak{R}_A} \phi$ for all modal bases $\mathscr{B}\in\Omega_P$ and set of modal relations $\mathfrak{R}_A$. 
\fillBox
\end{definition} \medskip


\begin{figure}[ht]
\begin{center}
\begin{tabular}{cc}

\begin{tikzpicture}[-,shorten >=1pt,node distance =1cm, thick] 
			\node[label=below:{$\bot$}]  (A) at (0,1) {$\mathscr{B}$};
			\node[label=below:{$\bot$}] (C) at (2,2) {$\mathscr{C}$};
			\node[label=below:{$\cancel{\bot}$}] (D) at (2,0) {$\mathscr{D}$};
			\path

(A) edge[->] node[above] {at least one\qquad\:\;\;\:\;\;\;\;\:\;\:\:\;\;\;     } (C)
;		
\draw[->] (A) to node {$\xcancel{\:\;}$} (D);

		\end{tikzpicture}

&

\begin{tikzpicture}[-,shorten >=1pt,node distance =2cm, thick] 

\node[label=below:{$\;$}] (Z) at (0,0) {$\;$};
\node[label=below:{$\cancel{\bot}$}] (A) at (2,1) {$\mathscr{B}$};
\node[label=below:{$\bot$}] (B) [right of=A] {$\mathscr{C}$};

\draw[->] (A) to node {$\xcancel{\:\;}$} (B);

\end{tikzpicture}

\end{tabular}
\end{center}
\caption{\label{11(ab)} Illustration of Definition \ref{modalRelation} (a) on the left and (b) on the right}
\end{figure}
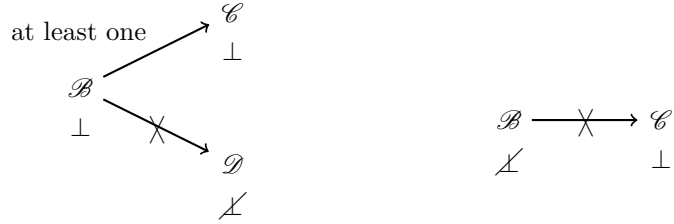

\begin{figure}[ht]
\begin{center}

\begin{tabular}{c p{.7cm} c}

\begin{tikzpicture}[-,shorten >=1pt,node distance =2cm, thick, baseline={([yshift=-1.8ex]current bounding box.center)}] 

\node (A) {$\mathscr{B}$};
\node (B) [right of=A] {$\mathscr{C}$};
\node[label=right:{$\cancel{\bot}$}] (C) [above of=A] {$\mathscr{D}$};

\draw[->] (A) to node {} (B);
\draw[dotted, right] (A) to node {} (C);

\end{tikzpicture}

&

$\Longrightarrow$

&

\begin{tikzpicture}[-,shorten >=1pt,node distance =2cm, thick, baseline={([yshift=-1.8ex]current bounding box.center)}] 

\node (A) {$\mathscr{B}$};
\node (B) [right of=A] {$\mathscr{C}$};
\node (C) [above of=A] {$\mathscr{D}$};
\node (D) [right of=C] {$\mathscr{E}$};

\draw[->] (A) to node {} (B);
\draw[dotted, right] (A) to node {} (C);
\draw[->, right] (C) to node {} (D);
\draw[dotted, right] (B) to node {} (D);

\end{tikzpicture}

\end{tabular}
\end{center}
\caption{\label{11(c)} Illustration of Definition \ref{modalRelation} (c) with dotted lines representing the subset relation}

\end{figure}

\begin{figure}[ht]
\begin{center}
\begin{tabular}{c p{.7cm} c}

\begin{tikzpicture}[-,shorten >=1pt,node distance =2cm, thick, baseline={([yshift=-1.8ex]current bounding box.center)}] 

\node (A) {$\mathscr{D}$};
\node (C) [above of=A] {$\mathscr{B}$};
\node (D) [right of=C] {$\mathscr{C}$};

\draw[->] (C) to node {} (D);
\draw[dotted, right] (A) to node {} (C);

\end{tikzpicture}

& 

$\Longrightarrow$

&

\begin{tikzpicture}[-,shorten >=1pt,node distance =2cm, thick, baseline={([yshift=-1.8ex]current bounding box.center)}] 

\node (A) {$\mathscr{D}$};
\node (B) [right of=A] {$\mathscr{E}$};
\node (C) [above of=A] {$\mathscr{B}$};
\node (D) [right of=C] {$\mathscr{C}$};

\draw[->] (A) to node {} (B);
\draw[dotted, right] (A) to node {} (C);
\draw[->, right] (C) to node {} (D);
\draw[dotted, right] (B) to node {} (D);

\end{tikzpicture}

\end{tabular}

\caption{\label{11(d)} Illustration of Definition \ref{modalRelation} (d) with dotted lines representing the subset relation}
\end{center}
\end{figure}

The following soundness and completeness theorem for multi-agent $S5$'s base-extension semantics relative to its Kripke semantics has been established in \cite{EckhardtPymS5}: 

\medskip 

\begin{theorem} \label{def:S5completeness}
    The base-extension semantics for multi-agent modal logic $S5$ is sound and complete with respect to the multi-agent modal logic $S5$.
\end{theorem}

 Even though the semantics established here is sound and complete for the epistemic modal logic underlying PAL, we will see below, that it will cause certain updates to not be possible. We further restrict our set of modal relations to avoid those cases. This differs from the conditions $(a)-(c)$ of Definition \ref{modalRelation} in that it is not a condition on the single modal relations but rather on the set of them.  First we define the idea of reachability given a set of relations.

\medskip 

\begin{definition} \label{def:reachable}
    A base $\mathscr{C}$ is considered reachable from a base $\mathscr{B}$ by a set of relations $\mathfrak{R}_A^\mathscr{B}$ iff there are bases $\mathscr{C}_1, \dots, \mathscr{C}_n$ and agents $a_0, \dots a_n \in A$ s.t. $\mathfrak{R}_{a_0}\mathscr{B}\mathscr{C}_1$, $\mathfrak{R}_{a_1} \mathscr{C}_1\mathscr{C}_2$, $\dots$, $\mathfrak{R}_{a_n}\mathscr{C}_n\mathscr{C}$ (i.e., there is a way to get from $\mathscr{B}$ to $\mathscr{C}$ using some combination of the relations in $\mathfrak{R}_A^\mathscr{B}$).

    Similarly, a base $\mathscr{C}$ is reachable from a base $\mathscr{B}$ using the sub/superset relation iff there are bases $\mathscr{C}_1, \dots, \mathscr{C}_n$ s.t. $\mathscr{B}\bullet_1\mathscr{C}_1,\mathscr{C}_1\bullet_2\mathscr{C}_2, \dots, \mathscr{C_n}\bullet_n\mathscr{C}$ where $\bullet_i\in\{\subseteq,\supseteq\}$. 
    \fillBox 
\end{definition} \medskip

Besides dealing with modal relations, we put two further restrictions on update sets. If we have two bases that are both reachable using the set of relations and the sub/superset relation, we require them to have access to the same bases and any consistent base that the bases that are reachable from can be divided up into a finite amount of sets s.t. all members of one of these sets has access to the same bases. We show in Section \ref{sec:B-eS-PAL} that this guarantees that the set of modal relations can be updated according to an announcement as long as the announced formula holds at that base and we give an example of how this might fail otherwise. 

\medskip 

\begin{definition}
    \label{def:UpdateRelations}

    Given a set of atoms $P$, a set of relation $\mathfrak{R}_A$ is called an \text{update} set iff

    \begin{itemize}
        \item[—] all $\mathfrak{R}_a \in \mathfrak{R}_A$ are $S5$-modal relations, and
        \item[—] for all consistent bases $\mathscr{B}$, there is a finite set $\pi_{\mathscr{B}} = \{\pi_1,\dots, \pi_n\}$ s.t
        \begin{itemize}
            \item[—] for all $i$, $\pi_i\subseteq \Omega_P$
            \item[—] for all $i$, and $\mathscr{C},\mathscr{D}\in \pi_i$, $\mathscr{D}$ is reachable from $\mathscr{C}$ using the sub/superset relation
            \item[—] for all $\mathscr{C}$, $\mathscr{C}$ is reachable from $\mathscr{B}$ using $\mathfrak{R}_A$ iff there is a $\pi_i$ s.t. $\mathscr{C}\in \pi_i$,
        \end{itemize}
        \item[—] for all consistent $\mathscr{B}$ and $\mathscr{B}'$ s.t. $\mathscr{B}'$ is reachable from $\mathscr{B}$ and $\mathscr{B}'\subseteq\mathscr{B}$, and all $\mathfrak{R}_a\in\mathfrak{R}_A$ and $\mathscr{C}$, \[\mathfrak{R}_a\mathscr{B}\mathscr{C} \text{ iff }\mathfrak{R}_a\mathscr{B}'\mathscr{C} \]
    \end{itemize}
    \fillBox
\end{definition} \medskip

We can now restate validity in terms of update sets simply by adapting Definition \ref{EXTValidity}.

\medskip 

\begin{definition}
 \label{UpdateValidity}

For the epistemic modal logic, we define validity at a base $\mathscr{B}$ given a set of agents $A$, a set of atomic formulae $P$, and an update set $\mathfrak{R}_A$ consisting of $S5$-modal relations $\mathfrak{R}_a$, one for every agent $a \in A$, as follows:
\[
\begin{array}{l@{\quad}c@{\quad}l}
\Vdash_{\mathscr{B},\mathfrak{R}_A} p   & \mbox{iff} & \mbox{$p$ is in every set of basic sentences closed under $\mathscr{B}$} \\  
        & & \mbox{(i.e., iff $p\in \overline{\mathscr{B}}$)} \\
       
\Vdash_{\mathscr{B},\mathfrak{R}_A} \phi\to\psi & \mbox{iff} & \mbox{$\phi\Vdash_{\mathscr{B},\mathfrak{R}_A} \psi$} \\ 
\Vdash_{\mathscr{B},\mathfrak{R}_A} \bot & \mbox{iff} & \mbox{$\Vdash_{\mathscr{B},\mathfrak{R}_A} p$ for every basic sentence $p\in P$} \\
\Vdash_{\mathscr{B},\mathfrak{R}_A} K_a \phi & \mbox{iff} & \mbox{for all $\mathscr{C}$ s.t. $\mathfrak{R}_a\mathscr{B}\mathscr{C}$, $\Vdash_{\mathscr{C},\mathfrak{R}_A} \phi$}\\
& & \\
 \mbox{For non empty $\Gamma$:} & &\\
\Gamma\Vdash_{\mathscr{B},\mathfrak{R}_A} \phi & \mbox{iff} & \mbox{for all $\mathscr{C}\supseteq \mathscr{B}$, if $\Vdash_{\mathscr{C},\mathfrak{R}_A} \psi$ for all $\psi \in \Gamma$, then $\Vdash_{\mathscr{C},\mathfrak{R}_A} \phi$}\\ 
\end{array} 
\]

\noindent Given a set of agents $A$ and set of atomic formulae $P$, a formula $\phi$ is \textit{valid}, written as $\Vdash^{A} \phi$, iff $\Vdash_{\mathscr{B}, \mathfrak{R}_A} \phi$ for all modal bases $\mathscr{B}\in\Omega_P$ and update sets $\mathfrak{R}_A$. \fillBox 
\end{definition} \medskip

\medskip 

\begin{theorem}
    \label{theo:UpdateCompleteness}

    The base-extension semantics for multi-agent modal logic $S5$ based on update sets is sound and complete with respect to the multi-agent modal logic $S5$.
    
\end{theorem}

\begin{proof}
    We establish that any formula that is valid in the Kripke semantics for multi-agent $S5$ is also valid in the base-extension semantics for multi-agent $S5$ without the update condition requirement in \cite{EckhardtPymS5}. This will obviously still hold if we add additional conditions.
    
    For the other direction, we show in \cite{EckhardtPymS5} that any formula that is false on a Kripke modal can be made false in some base given some set of relations. As it turns out, this construction can be adapted to update sets and so used to proof the same given some update set. The adapted construction and proof that it results in an update set is in Appendix \ref{App1}.
\end{proof}

We prove some lemmas important in establishing the soundness and completeness results, as they will be generalized to the semantics for PAL in Section \ref{sec:B-eS-PAL}. More details on how these are used in the proof of soundness and completeness can be found in \cite{EckhardtPymS5} as they remain fundamentally unchanged.

\medskip 

\begin{lemma} \label{ModalMonotonicity}
If $\Gamma\Vdash_{\mathscr{B},\mathfrak{R}_A} \phi$ and $\mathscr{B}\subseteq\mathscr{C}$, then $\Gamma\Vdash_{\mathscr{C},\mathfrak{R}_A} \phi$.
    
\end{lemma}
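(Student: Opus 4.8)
The plan is to reduce the statement to the case of an empty antecedent set and then argue by induction on the structure of $\phi$. First note that for non-empty $\Gamma$ the claim is immediate from Definition \ref{EXTValidity}: $\Gamma\Vdash_{\mathscr{B},\mathfrak{R}_A}\phi$ unfolds to a universal quantification over all $\mathscr{D}\supseteq\mathscr{B}$, and since $\mathscr{B}\subseteq\mathscr{C}$ every $\mathscr{D}\supseteq\mathscr{C}$ is in particular a superset of $\mathscr{B}$; so the defining condition transfers verbatim to give $\Gamma\Vdash_{\mathscr{C},\mathfrak{R}_A}\phi$. Hence the real content is monotonicity of plain validity $\Vdash_{\mathscr{B},\mathfrak{R}_A}\phi$ in the base $\mathscr{B}$, which I would establish by induction on $\phi$.

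For the atomic case, $\Vdash_{\mathscr{B},\mathfrak{R}_A}p$ means $p\in\overline{\mathscr{B}}$, and since $\mathscr{B}\subseteq\mathscr{C}$ implies $\overline{\mathscr{B}}\subseteq\overline{\mathscr{C}}$ (the closure operator is monotone: any derivation from the rules of $\mathscr{B}$ is also one from the rules of $\mathscr{C}$), we get $p\in\overline{\mathscr{C}}$. The case $\bot$ reduces to the atomic case applied to every atom, and the case $\psi\to\chi$ is again immediate, since $\psi\Vdash_{\mathscr{B},\mathfrak{R}_A}\chi$ is itself a quantification over supersets of $\mathscr{B}$, which includes all supersets of $\mathscr{C}$.

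The only case requiring real work is $K_a\psi$. Assume $\Vdash_{\mathscr{B},\mathfrak{R}_A}K_a\psi$ and $\mathscr{B}\subseteq\mathscr{C}$; I must show $\Vdash_{\mathscr{E},\mathfrak{R}_A}\psi$ for every $\mathscr{E}$ with $\mathfrak{R}_a\mathscr{C}\mathscr{E}$. If $\mathscr{E}$ is consistent, condition (d) of Definition \ref{modalRelation} — applied to $\mathfrak{R}_a\mathscr{C}\mathscr{E}$ and the subset $\mathscr{B}\subseteq\mathscr{C}$ — yields a base $\mathscr{E}'\subseteq\mathscr{E}$ with $\mathfrak{R}_a\mathscr{B}\mathscr{E}'$; the hypothesis $\Vdash_{\mathscr{B},\mathfrak{R}_A}K_a\psi$ then gives $\Vdash_{\mathscr{E}',\mathfrak{R}_A}\psi$, and the induction hypothesis for the subformula $\psi$ (with $\mathscr{E}'\subseteq\mathscr{E}$) upgrades this to $\Vdash_{\mathscr{E},\mathfrak{R}_A}\psi$. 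If instead $\mathscr{E}$ is inconsistent, then $\Vdash_{\mathscr{E},\mathfrak{R}_A}\psi$ holds because every formula is valid at an inconsistent base — a fact that is itself a routine induction on the formula using condition (a) together with the atomic and $\bot$ clauses, or may be cited from \cite{EckhardtPymS5}. Since this covers all $\mathfrak{R}_a$-successors of $\mathscr{C}$, we conclude $\Vdash_{\mathscr{C},\mathfrak{R}_A}K_a\psi$.

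I expect the $K_a$ case to be the main obstacle, and more precisely the bookkeeping around consistency: condition (d) is only available when the target base is consistent, so the inconsistent successors of $\mathscr{C}$ must be dispatched separately, which is exactly why the auxiliary observation that inconsistent bases validate everything is needed. All other cases are either a direct appeal to the relevant clause of Definition \ref{EXTValidity} or the monotonicity of $\overline{\mathscr{B}}$ in $\mathscr{B}$.
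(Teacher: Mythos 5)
Your proof is correct and takes essentially the same route as the paper: the non-empty-$\Gamma$ case is dismissed directly from the definition, and for empty $\Gamma$ the only new case, $K_a\psi$, is handled exactly as in the paper by using condition (d) of Definition \ref{modalRelation} to pull each $\mathfrak{R}_a$-successor of $\mathscr{C}$ down to a subset successor of $\mathscr{B}$ and then applying the induction hypothesis for $\psi$. If anything you are slightly more careful than the paper, which applies condition (d) without separating out inconsistent successors, where (d) is unavailable and one needs, as you note, the fact that inconsistent bases support every formula (Lemma \ref{EFQ}, whose proof does not depend on monotonicity, so no circularity arises).
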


\begin{proof}
    This proof goes by induction on the complexity of $\phi$. If $\Gamma$ is not empty, this follows immediately from the validity conditions. 
    
    For empty $\Gamma$, the propositional cases remain unchanged from the proof for classical propositional logic in \cite{Sandqvist2009}. The new case is the case in which $\phi=K_a\psi$. In order for $\Vdash_{\mathscr{B},\mathfrak{R}_A} K_a\psi$, at all $\mathscr{D}$ s.t. $\mathfrak{R}_a\mathscr{B}\mathscr{D}$, $\Vdash_{\mathscr{D},\mathfrak{R}_A} \psi$.
    By condition (d) of Definition \ref{modalRelation}, we know that for every $\mathscr{E}$ s.t. $\mathfrak{R}_a\mathscr{C}\mathscr{E}$, there is such a $\mathscr{D}$ with $\mathscr{D}\subseteq\mathscr{E}$. By induction hypothesis $\Vdash_{\mathscr{E},\mathfrak{R}_A} \psi$ and, since this holds for all $\mathscr{E}$ s.t. $\mathfrak{R}_a\mathscr{C}\mathscr{E}$, we conclude $\Vdash_{\mathscr{C},\mathfrak{R}_A} K_a\psi$.
\end{proof}

Additionally, the conditions of Definition \ref{modalRelation} guarantees {\it ex falso quod libet} in the sense that any formula $\phi$ follows from $\bot$.

\medskip 

\begin{lemma}
    \label{EFQ}
    For all inconsistent bases $\mathscr{B}$ and $\mathfrak{R}_A$, $\Vdash_{\mathscr{B},\mathfrak{R}_A}\phi$, for all $\phi$.
    
\end{lemma}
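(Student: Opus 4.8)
The plan is to argue by induction on the structure of $\phi$, the engine being that inconsistency propagates upwards both along the subset relation on bases and along the modal relations. The latter is exactly what condition (a) of Definition \ref{modalRelation} secures, and the former is immediate: if $\mathscr{B}$ is inconsistent and $\mathscr{B}\subseteq\mathscr{C}$, then every atom lies in $\overline{\mathscr{B}}\subseteq\overline{\mathscr{C}}$, so $\mathscr{C}$ is inconsistent too. I would state this monotonicity-of-inconsistency remark first, since it is used in the implication case.

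For the induction, the atomic case is just the definition of inconsistency: $p\in\overline{\mathscr{B}}$ for every $p$, hence $\Vdash_{\mathscr{B},\mathfrak{R}_A}p$. The case $\phi=\bot$ follows from the atomic case together with the validity clause for $\bot$ (which asks exactly that $\Vdash_{\mathscr{B},\mathfrak{R}_A}p$ hold for all atoms $p$). For $\phi=\psi\to\chi$, unfolding the validity clause we must show $\psi\Vdash_{\mathscr{B},\mathfrak{R}_A}\chi$, i.e.\ that for every $\mathscr{C}\supseteq\mathscr{B}$ with $\Vdash_{\mathscr{C},\mathfrak{R}_A}\psi$ we have $\Vdash_{\mathscr{C},\mathfrak{R}_A}\chi$; but any such $\mathscr{C}$ is inconsistent by the opening remark, so the induction hypothesis applied to $\chi$ at $\mathscr{C}$ gives $\Vdash_{\mathscr{C},\mathfrak{R}_A}\chi$ outright (the hypothesis $\Vdash_{\mathscr{C},\mathfrak{R}_A}\psi$ is not even needed). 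Finally, for $\phi=K_a\psi$, condition (a) of Definition \ref{modalRelation} tells us that since $\mathscr{B}$ is inconsistent, every $\mathscr{C}$ with $\mathfrak{R}_a\mathscr{B}\mathscr{C}$ is inconsistent; by the induction hypothesis $\Vdash_{\mathscr{C},\mathfrak{R}_A}\psi$ for each such $\mathscr{C}$, and therefore $\Vdash_{\mathscr{B},\mathfrak{R}_A}K_a\psi$.

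There is no real obstacle here; the only point that needs care is the modal case, where the result would genuinely fail without the guarantee that \emph{all} $\mathfrak{R}_a$-successors of an inconsistent base are inconsistent — so the proof is really a demonstration that condition (a) was formulated with precisely this \emph{ex falso} behaviour in mind. (Note the statement only concerns the empty-$\Gamma$ judgement $\Vdash_{\mathscr{B},\mathfrak{R}_A}\phi$, so no separate treatment of non-empty $\Gamma$ is required.)
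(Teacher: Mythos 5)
Your proof is correct and follows essentially the same route as the paper's: induction on the complexity of $\phi$, with the propositional cases handled by the validity clauses (plus the observation that inconsistency persists to supersets, which you usefully make explicit for the implication case) and the $K_a$ case handled by condition (a) of Definition \ref{modalRelation} together with the induction hypothesis. The paper's own proof is just a terser version of this same argument.
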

\begin{proof}
    This proof goes by induction on the complexity of the formula $\phi$. For propositional $\phi$ this follows immediately from the validity conditions in Definition \ref{UpdateValidity}. For the case in which $\phi=K_a\psi$, the condition (a) ensures that any inconsistent base can only have access to other inconsistent bases and, by induction hypothesis, $\psi$ holds at these bases.
\end{proof}

 We define maximally-consistent bases. They behave interestingly.
 
\medskip 

\begin{definition}
    \label{MaxConDef}
    A base $\mathscr{B}$ is maximally-consistent iff it is consistent and for every base rule $\delta$, either $\delta \in \mathscr{B}$ or $\mathscr{B}\cup\{\delta\}$ is inconsistent. \fillBox
\end{definition} \medskip

We can show that Kripke-like validity conditions hold for maximally-consistent bases as they do not have consider extensions.

\medskip 

\begin{lemma}
\label{ModalBehaviour}
For any update set $\mathfrak{R}_A$ and maximally-consistent base $\mathscr{B}$, the following hold:

\begin{itemize}
    \item[—] $\nVdash_{\mathscr{B},\mathfrak{R}_A} \bot$, and
    \item[—] $\Vdash_{\mathscr{B},\mathfrak{R}_A} \phi \to \psi$ iff $\nVdash_{\mathscr{B},\mathfrak{R}_A} \phi$ or $\Vdash_{\mathscr{B},\mathfrak{R}_A} \psi$.
\end{itemize}

\end{lemma}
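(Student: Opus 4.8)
The plan is to prove the two bullets for a fixed maximally-consistent base $\mathscr{B}$ and a fixed set of $S5$-modal relations $\mathfrak{R}_A$, using the key fact that a maximally-consistent base has no proper consistent extensions: if $\mathscr{C} \supseteq \mathscr{B}$ is consistent, then $\mathscr{C} = \mathscr{B}$ (otherwise some $\delta \in \mathscr{C} \setminus \mathscr{B}$ would give $\mathscr{B} \cup \{\delta\}$ inconsistent, hence $\mathscr{C}$ inconsistent by monotonicity of $\vdash_{\cdot}$). This collapses the quantification over extensions $\mathscr{C} \supseteq \mathscr{B}$ that appears in the validity clauses for implication and for nonempty $\Gamma$, which is exactly what makes the semantics behave classically at such bases.

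For the first bullet, suppose for contradiction $\Vdash_{\mathscr{B},\mathfrak{R}_A} \bot$. By Definition \ref{EXTValidity} this means $\Vdash_{\mathscr{B},\mathfrak{R}_A} p$ for every atom $p$, i.e.\ $p \in \overline{\mathscr{B}}$ for every $p$, which says precisely that $\mathscr{B}$ is inconsistent, contradicting maximal-consistency. For the second bullet, I would argue both directions. The right-to-left direction does not need maximal-consistency: if $\Vdash_{\mathscr{B},\mathfrak{R}_A} \psi$ then by Lemma \ref{ModalMonotonicity} $\psi$ holds at every extension, so $\phi \Vdash_{\mathscr{B},\mathfrak{R}_A} \psi$ and hence $\Vdash_{\mathscr{B},\mathfrak{R}_A} \phi \to \psi$; if $\nVdash_{\mathscr{B},\mathfrak{R}_A} \phi$, then I would still need to rule out the possibility that $\phi$ becomes valid at a proper extension, and here maximal-consistency is used: the only extension of $\mathscr{B}$ at which we must check $\phi$ is $\mathscr{B}$ itself (inconsistent extensions satisfy everything by Lemma \ref{EFQ}, in particular $\psi$, so they cause no problem), so $\phi \Vdash_{\mathscr{B},\mathfrak{R}_A} \psi$ holds vacuously. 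For left-to-right, assume $\Vdash_{\mathscr{B},\mathfrak{R}_A} \phi \to \psi$, i.e.\ $\phi \Vdash_{\mathscr{B},\mathfrak{R}_A} \psi$, and suppose $\Vdash_{\mathscr{B},\mathfrak{R}_A} \phi$; applying the definition of $\phi \Vdash_{\mathscr{B},\mathfrak{R}_A} \psi$ at the extension $\mathscr{C} = \mathscr{B}$ itself yields $\Vdash_{\mathscr{B},\mathfrak{R}_A} \psi$.

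The only mildly delicate point — and the step I would flag as the main obstacle — is the treatment of inconsistent extensions of $\mathscr{B}$ in the right-to-left direction of the second bullet and in the unwinding of $\Gamma\Vdash$: a maximally-consistent $\mathscr{B}$ has no proper \emph{consistent} extensions, but it may well have proper \emph{inconsistent} ones, and the clause for $\phi\Vdash_{\mathscr{B},\mathfrak{R}_A}\psi$ quantifies over \emph{all} $\mathscr{C}\supseteq\mathscr{B}$. The resolution is Lemma \ref{EFQ}: at any inconsistent $\mathscr{C}$ we have $\Vdash_{\mathscr{C},\mathfrak{R}_A}\psi$ automatically, so such $\mathscr{C}$ never witness a failure of $\phi\Vdash_{\mathscr{B},\mathfrak{R}_A}\psi$, and the condition reduces to its instance at $\mathscr{C}=\mathscr{B}$. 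Once this observation is in place, both bullets follow by the short arguments above, and I would present it as a single preliminary remark before splitting into cases.
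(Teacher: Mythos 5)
Your proof is correct, but it takes a different route from the paper: the paper dispenses with this lemma in one line, observing that the support clauses for $\to$ and $\bot$ make no reference to $\mathfrak{R}_A$ and therefore the argument carries over verbatim from the purely classical setting of Makinson's earlier work, which it simply cites. You instead give a self-contained argument, and its ingredients are exactly the ones that citation hides: the observation that a maximally-consistent base has no proper \emph{consistent} extensions (via monotonicity of atomic derivability), Lemma \ref{EFQ} to neutralise the proper \emph{inconsistent} extensions over which the clause for $\phi\Vdash_{\mathscr{B},\mathfrak{R}_A}\psi$ still quantifies, and Lemma \ref{ModalMonotonicity} for the case $\Vdash_{\mathscr{B},\mathfrak{R}_A}\psi$. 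Your handling of the one delicate point --- that at an inconsistent extension the conditional holds because the consequent does, not vacuously at $\mathscr{B}$ --- is exactly right, and is the step a reader of the paper has to reconstruct for themselves; the only cost of your version is length, and the only (very minor) looseness is the phrase ``holds vacuously'' for the reduction, since at inconsistent extensions it is the consequent, not the failure of the antecedent, that does the work, as your own parenthetical already makes clear. Both approaches are sound; yours buys self-containedness and makes explicit that the modal apparatus genuinely plays no role, while the paper's buys brevity by outsourcing the argument to the classical case.
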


\begin{proof}
    As the validity conditions for $\to$ and $\bot$ do not depend on $\mathfrak{R}_A$  the proof remains unchanged from \cite{Makinson2014}.
\end{proof}

Additionally, for any formula $\phi$, at a maximally-consistent base either it or its negation will hold.

\medskip 

\begin{lemma}
    \label{lem:excludedmiddle}
    For every formula $\phi$, maximally-consistent base $\mathscr{B}$, and update set $\mathfrak{R}_A$, either $\Vdash_{\mathscr{B},\mathfrak{R}_A} \phi$ or $\Vdash_{\mathscr{B},\mathfrak{R}_A} \phi\to\bot$.
\end{lemma}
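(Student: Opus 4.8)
The statement to prove is Lemma~\ref{lem:excludedmiddle}: for every formula $\phi$, maximally-consistent base $\mathscr{B}$, and set of $S5$-modal relations $\mathfrak{R}_A$, either $\Vdash_{\mathscr{B},\mathfrak{R}_A}\phi$ or $\Vdash_{\mathscr{B},\mathfrak{R}_A}\phi\to\bot$.

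The plan is to proceed by induction on the complexity of $\phi$, using Lemma~\ref{ModalBehaviour} to handle the propositional connectives in the "Kripke-like" way that is available at maximally-consistent bases. For the atomic base case $\phi = p$: since $\mathscr{B}$ is maximally-consistent, either $p \in \overline{\mathscr{B}}$ — in which case $\Vdash_{\mathscr{B},\mathfrak{R}_A} p$ directly — or $p \notin \overline{\mathscr{B}}$; in the latter case I would argue that $\Vdash_{\mathscr{B},\mathfrak{R}_A} p \to \bot$, i.e. $p \Vdash_{\mathscr{B},\mathfrak{R}_A} \bot$, which unwinds (using monotonicity, Lemma~\ref{ModalMonotonicity}, and the fact that $\mathscr{B}$ admits no consistent proper extension of its rule set) to: for every $\mathscr{C}\supseteq\mathscr{B}$ with $p\in\overline{\mathscr{C}}$, $\mathscr{C}$ is inconsistent. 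This should follow from maximal consistency: adding the axiom $\Rightarrow p$ to $\mathscr{B}$ yields an inconsistent base, and any $\mathscr{C}\supseteq\mathscr{B}$ with $p\in\overline{\mathscr{C}}$ derives everything $\mathscr{B}\cup\{\Rightarrow p\}$ does. The $\bot$ case is immediate from Lemma~\ref{ModalBehaviour} (we never have $\Vdash_{\mathscr{B},\mathfrak{R}_A}\bot$, and $\bot\to\bot$ holds trivially). For $\phi = \psi\to\chi$, apply the induction hypothesis to $\psi$ and $\chi$ and then use the biconditional in Lemma~\ref{ModalBehaviour} to do the case analysis exactly as in classical logic.

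The genuinely new and hardest case is $\phi = K_a\psi$. Here I would argue: either $\Vdash_{\mathscr{B},\mathfrak{R}_A} K_a\psi$, i.e. $\Vdash_{\mathscr{C},\mathfrak{R}_A}\psi$ for all $\mathscr{C}$ with $\mathfrak{R}_a\mathscr{B}\mathscr{C}$, or there exists some $\mathscr{C}$ with $\mathfrak{R}_a\mathscr{B}\mathscr{C}$ and $\nVdash_{\mathscr{C},\mathfrak{R}_A}\psi$. In the second case I need $\Vdash_{\mathscr{B},\mathfrak{R}_A}(K_a\psi)\to\bot$, i.e. $K_a\psi\Vdash_{\mathscr{B},\mathfrak{R}_A}\bot$, i.e. for every $\mathscr{D}\supseteq\mathscr{B}$, if $\Vdash_{\mathscr{D},\mathfrak{R}_A}K_a\psi$ then $\mathscr{D}$ is inconsistent. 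Since $\mathscr{B}$ is maximally-consistent, the only consistent $\mathscr{D}\supseteq\mathscr{B}$ is $\mathscr{B}$ itself (up to having the same closure), so it suffices to show $\nVdash_{\mathscr{B},\mathfrak{R}_A}K_a\psi$ — which is exactly what the case hypothesis gives, witnessed by $\mathscr{C}$. The subtlety is making precise that "maximally-consistent $\mathscr{B}$ has no consistent proper superset of relevance": if $\mathscr{D}\supseteq\mathscr{B}$ is consistent then every base rule in $\mathscr{D}$ is either in $\mathscr{B}$ or makes $\mathscr{B}$ inconsistent, hence $\mathscr{D} = \mathscr{B}$; and if $\mathscr{D}$ is inconsistent the required implication is vacuous. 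This reduces everything cleanly, so the "obstacle" is really just careful bookkeeping with the definition of validity of $K_a\psi\to\bot$ at a maximally-consistent base rather than any deep argument.

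One point I would double-check is whether the case hypothesis needs the modal-relation conditions (a)--(d) at all: condition~(a) guarantees that inconsistent bases only access inconsistent bases, which is what makes Lemma~\ref{EFQ} available, but for the positive direction of this lemma at a \emph{consistent} (indeed maximally-consistent) $\mathscr{B}$, conditions (c) and (d) are not obviously needed — the argument is essentially "law of excluded middle pushed through the clauses." I expect the author's proof to be a short induction invoking Lemma~\ref{ModalBehaviour} for the connectives and a direct unwinding of the $K_a$-clause together with the observation about proper extensions of maximally-consistent bases.
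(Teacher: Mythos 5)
Your proposal is correct, but it is considerably more elaborate than the paper's argument, and the comparison is instructive. The paper proves the lemma in two lines, uniformly in $\phi$ and with no induction: assume $\nVdash_{\mathscr{B},\mathfrak{R}_A}\phi$; by Definition~\ref{MaxConDef} every proper superset of a maximally-consistent $\mathscr{B}$ is inconsistent, hence supports every atom and so supports $\bot$; therefore any $\mathscr{C}\supseteq\mathscr{B}$ with $\Vdash_{\mathscr{C},\mathfrak{R}_A}\phi$ (necessarily a proper superset, since $\phi$ fails at $\mathscr{B}$) satisfies $\Vdash_{\mathscr{C},\mathfrak{R}_A}\bot$, which is exactly $\Vdash_{\mathscr{B},\mathfrak{R}_A}\phi\to\bot$. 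Notice that this is precisely the argument you give inside your $K_a\psi$ case (and, in slightly disguised form, in your atomic case): the observation that $\mathscr{B}$ has no consistent proper extensions does all the work, and it does so for \emph{arbitrary} $\phi$ without any appeal to the shape of the formula. Consequently your induction on complexity, the use of Lemma~\ref{ModalBehaviour} for the connectives, and the separate handling of atoms are all dispensable --- indeed your cases never genuinely use the induction hypothesis, which is a sign the induction is redundant. Your closing suspicion that conditions (a)--(d) play no role here is confirmed by the paper's proof. One small slip to correct: in the modal case you say the implication at an inconsistent $\mathscr{D}\supseteq\mathscr{B}$ holds ``vacuously''; it does not --- by Lemma~\ref{lem:PALEFQ}-style reasoning the antecedent $\Vdash_{\mathscr{D},\mathfrak{R}_A}K_a\psi$ actually holds at inconsistent bases --- but the implication still holds because the consequent $\Vdash_{\mathscr{D},\mathfrak{R}_A}\bot$ holds there by the definition of inconsistency, which is the justification the paper uses.
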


\begin{proof}
    Consider the case in which $\nVdash_{\mathscr{B},\mathfrak{R}_A} \phi$. Any base $\mathscr{C}$ s.t. $\mathscr{C}\supset\mathscr{B}$ is inconsistent and so $\Vdash_{\mathscr{C},\mathfrak{R}_A} \bot$ and so $\Vdash_{\mathscr{B},\mathfrak{R}_A} \phi\to\bot$. 
\end{proof}

\begin{lemma}
\label{lem: ModalMaxCon} 
For every formula $\phi$, if there is a base $\mathscr{B}$ s.t. $\nVdash_{\mathscr{B},\mathfrak{R}_A} \phi$, then there is a maximally-consistent base $\mathscr{C}\supseteq \mathscr{B}$ with $\nVdash_{\mathscr{C},\mathfrak{R}_A} \phi$.
\end{lemma}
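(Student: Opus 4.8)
The plan is to prove the statement by induction on the structure of $\phi$, using throughout that $\nVdash_{\mathscr{B},\mathfrak{R}_A}\phi$ already forces $\mathscr{B}$ to be consistent (by Lemma~\ref{EFQ}, an inconsistent base validates everything). The fixed family $\mathfrak{R}_A$ is suppressed from the notation below. The idea is to settle the propositional cases by a Lindenbaum-style saturation keeping one chosen atom underivable, and to reduce the cases $\phi=\psi\to\chi$ and $\phi=K_a\psi$ to the induction hypothesis applied to a subformula at a possibly larger base.

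For $\phi=p$: $\nVdash_{\mathscr{B}}p$ says $p\notin\overline{\mathscr{B}}$, and since derivations are finite the property ``does not derive $p$'' is preserved under unions of $\subseteq$-chains, so Zorn's lemma (or an enumeration of the base rules) yields a base $\mathscr{C}\supseteq\mathscr{B}$ maximal among those not deriving $p$. Such a $\mathscr{C}$ is maximally consistent in the sense of Definition~\ref{MaxConDef}: it is consistent, and for any rule $\delta\notin\mathscr{C}$ maximality forces $p\in\overline{\mathscr{C}\cup\{\delta\}}$, while every rule $p\Rightarrow r$ already lies in $\mathscr{C}$ (adding it can never be what first makes $p$ derivable), so $\overline{\mathscr{C}\cup\{\delta\}}$ contains every atom and $\mathscr{C}\cup\{\delta\}$ is inconsistent; and $\nVdash_{\mathscr{C}}p$ holds by construction. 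The case $\phi=\bot$ reduces to this, since $\nVdash_{\mathscr{B}}\bot$ supplies an atom $p\notin\overline{\mathscr{B}}$ and a maximally-consistent $\mathscr{C}\supseteq\mathscr{B}$ with $\nVdash_{\mathscr{C}}p$ also has $\nVdash_{\mathscr{C}}\bot$. For $\phi=\psi\to\chi$: from $\nVdash_{\mathscr{B}}\psi\to\chi$, i.e.\ $\psi\nVdash_{\mathscr{B}}\chi$, one gets $\mathscr{D}\supseteq\mathscr{B}$ with $\Vdash_{\mathscr{D}}\psi$ and $\nVdash_{\mathscr{D}}\chi$; applying the induction hypothesis to $\chi$ at $\mathscr{D}$ gives a maximally-consistent $\mathscr{C}\supseteq\mathscr{D}\supseteq\mathscr{B}$ with $\nVdash_{\mathscr{C}}\chi$, and then $\Vdash_{\mathscr{C}}\psi$ by Lemma~\ref{ModalMonotonicity} and hence $\nVdash_{\mathscr{C}}\psi\to\chi$ by Lemma~\ref{ModalBehaviour}.

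The main obstacle is the modal case $\phi=K_a\psi$. From $\nVdash_{\mathscr{B}}K_a\psi$ we obtain $\mathscr{D}$ with $\mathfrak{R}_a\mathscr{B}\mathscr{D}$ and $\nVdash_{\mathscr{D}}\psi$; $\mathscr{D}$ is consistent by Definition~\ref{modalRelation}(b), and the induction hypothesis applied to $\psi$ at $\mathscr{D}$ gives a maximally-consistent $\mathscr{D}^{*}\supseteq\mathscr{D}$ with $\nVdash_{\mathscr{D}^{*}}\psi$. What remains is to saturate $\mathscr{B}$ to a maximally-consistent $\mathscr{C}^{*}$ that retains an $\mathfrak{R}_a$-successor on which $\psi$ fails. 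The line of attack is to saturate $\mathscr{B}$ while carrying a companion base $\mathscr{D}_n$ obeying $\mathscr{D}\subseteq\mathscr{D}_n\subseteq\mathscr{D}^{*}$ and $\mathfrak{R}_a\mathscr{C}_n\mathscr{D}_n$: condition (c) of Definition~\ref{modalRelation} lifts the modal edge along the left-hand chain, the maximal consistency of $\mathscr{D}^{*}$ (which has no proper consistent extension, so any consistent base above $\mathscr{D}$ compatible with $\mathscr{D}^{*}$ already lies inside it) together with conditions (b) and (d) keeps the companion under the ceiling $\mathscr{D}^{*}$, and the contrapositive of Lemma~\ref{ModalMonotonicity} then keeps $\psi$ false all along the companion chain; if this can be pushed through the limit one obtains $\nVdash_{\mathscr{C}^{*}}K_a\psi$. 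I expect this to be the hard part, for two reasons: (i) each saturation step must add a base rule on the left while keeping \emph{both} the eventual maximal consistency of $\mathscr{C}^{*}$ \emph{and} the companion's confinement to $\mathscr{D}^{*}$, so conditions (c) and (d) must be used in concert, and one has to verify that a rule which cannot be added without ejecting the companion from $\mathscr{D}^{*}$ could not have been added consistently in the first place (otherwise $\mathscr{C}^{*}$ would fail to be maximal); and (ii) the limit is not free, since — in contrast with underivability of an atom — neither ``$\mathfrak{R}_a\mathscr{C}\mathscr{D}$'' nor ``$\nVdash\psi$'' is preserved under unions of chains, so one needs a chain-closure property for modal relations, presumably squeezed out of conditions (c) and (d), to recover an $\mathfrak{R}_a$-edge from $\mathscr{C}^{*}$ into $\mathscr{D}^{*}$ in the limit.
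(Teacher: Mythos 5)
Your propositional cases are fine and essentially the paper's (a Makinson-style Lindenbaum saturation for the atomic case, then $\bot$ and $\to$ as you describe). The modal case, however, is where the real content of the lemma lies, and there your proposal stops at a sketch whose two flagged difficulties are genuine obstructions, not technicalities to be filled in later. First, the companion-chain confinement fails: when you extend $\mathscr{C}_n$ to $\mathscr{C}_{n+1}$, condition (c) of Definition \ref{modalRelation} only provides \emph{some} successor $\mathscr{E}\supseteq\mathscr{D}_n$ with $\mathfrak{R}_a\mathscr{C}_{n+1}\mathscr{E}$; nothing in (b), (c), or (d) forces $\mathscr{E}\subseteq\mathscr{D}^{*}$, and $\mathscr{E}$ may consistently contain rules outside $\mathscr{D}^{*}$, at which point the failure of $\psi$ is no longer inherited (the contrapositive of Lemma \ref{ModalMonotonicity} passes failure only downwards, to subsets). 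Your parenthetical claim that any consistent base above $\mathscr{D}$ ``compatible with $\mathscr{D}^{*}$'' already lies inside $\mathscr{D}^{*}$ assumes exactly the confinement that needs to be proved. Second, as you yourself note, the limit step requires a chain-closure property for modal relations that Definition \ref{modalRelation} does not supply, so even if every finite stage succeeded you would have no edge out of $\mathscr{C}^{*}=\bigcup_n \mathscr{C}_n$.

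The paper's proof avoids both problems by saturating the \emph{successor} side first and then exploiting the symmetry of the $S5$ relation. From $\mathfrak{R}_a\mathscr{B}\mathscr{C}$ and $\nVdash_{\mathscr{C},\mathfrak{R}_A}\psi$, the induction hypothesis gives a maximally-consistent $\mathscr{C}^{*}\supseteq\mathscr{C}$ with $\nVdash_{\mathscr{C}^{*},\mathfrak{R}_A}\psi$. Symmetry yields $\mathfrak{R}_a\mathscr{C}\mathscr{B}$, so condition (c), applied to the consistent extension $\mathscr{C}^{*}\supseteq\mathscr{C}$, produces $\mathscr{B}'\supseteq\mathscr{B}$ with $\mathfrak{R}_a\mathscr{C}^{*}\mathscr{B}'$, hence by symmetry $\mathfrak{R}_a\mathscr{B}'\mathscr{C}^{*}$. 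Now take any maximally-consistent $\mathscr{B}^{*}\supseteq\mathscr{B}'$ and apply (c) once more to the edge $\mathfrak{R}_a\mathscr{B}'\mathscr{C}^{*}$: the guaranteed successor is a superset of $\mathscr{C}^{*}$, and since $\mathscr{C}^{*}$ has no proper consistent supersets while (b) forbids a consistent base from accessing an inconsistent one, that successor must be $\mathscr{C}^{*}$ itself. Thus $\mathfrak{R}_a\mathscr{B}^{*}\mathscr{C}^{*}$ and $\nVdash_{\mathscr{B}^{*},\mathfrak{R}_A}K_a\psi$, with no step-by-step construction and no limit to take. The paper's footnote makes the point explicitly: the argument hinges on symmetry, which is precisely the resource your forward saturation never invokes and would need in order to close the gap.
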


\begin{proof}
    The proof goes by induction on the complexity of the formula $\phi$. The proof for the classical cases follows the proof of Lemma 3.3 and 3.4 in \cite{Makinson2014}.

    We consider the case in which $\phi=K_a\psi$. Since $\nVdash_{\mathscr{B},\mathfrak{R}_A} K_a\psi$, there is a $\mathscr{C}$ s.t. $\mathfrak{R}_a\mathscr{B}\mathscr{C}$ and $\nVdash_{\mathscr{C},\mathfrak{R}_A} \psi$. By our induction hypothesis, we know that there is a maximally-consistent $\mathscr{C}^*\supseteq\mathscr{C}$. By symmetry, $\mathfrak{R}_a\mathscr{C}\mathscr{B}$ and so, by condition (c) of Defintition \ref{modalRelation}, there is a $\mathscr{B}'\supseteq\mathscr{B}$ s.t. $\mathfrak{R}_a\mathscr{C}^*\mathscr{B}'$. Again by symmetry, we have $\mathfrak{R}_a\mathscr{B}'\mathscr{C}^*$. If $\mathscr{B}'$ is maximally-consistent, let $\mathscr{B}'$ be our $\mathscr{B}^*$. Otherwise, note that, for any maximally-consistent $\mathscr{B}^*\supseteq\mathscr{B}'$, we have $\mathfrak{R}_a\mathscr{B}^*\mathscr{C}^*$ because of condition (c) again and because $\mathscr{C}^*$ is maximally-consistent. Finally, since $\nVdash_{\mathscr{C}^*,\mathfrak{R}_A} \psi$, we have $\nVdash_{\mathscr{B}^*,\mathfrak{R}_A} K_a\psi$.\footnote{It might be of interest to note here that this proof relies on $\mathfrak{R}_a$ being symmetric. This suggests that it cannot be used to show that this property holds for arbitrary modal relations. More conditions might be required for this lemma to hold for non-symmetric modal relations.}
\end{proof}

\iffalse{
\begin{cor}
    Any formula $\phi$ that is valid at all maximally-consistent bases for all $\mathfrak{R}_A$, is valid.
\end{cor} \medskip 
}\else

\section{Base-extension Semantics for PAL} \label{sec:B-eS-PAL}

In order to develop a base-extension for PAL, we must first give a way of representing the model-update of PAL's Kripke semantics in base-extension semantics. The set of bases, however, is given by the propositional letters and the shape of our base rules. So, we look at updating the modal relations $\mathfrak{R}_a$ instead.

Given a relation $\mathfrak{R}_a$ and an announcement of a formula $\phi$, we want to create a new relation $\mathfrak{R}_a |\phi$ that corresponds to $\mathfrak{R}_a$ after the announcement of $\phi$ much like $M|\phi$ corresponds to $M$.

An obvious candidate for such an $\mathfrak{R}_a|\phi$ might be that subset of $\mathfrak{R}_a$ where for all $\mathscr{B}$, $\mathscr{C}$ s.t. $\mathfrak{R}_a|\phi \mathscr{B}\mathscr{C}$, $\Vdash_{\mathscr{B}, \mathfrak{R}_A} \phi$ iff $\Vdash_{\mathscr{C}, \mathfrak{R}_A} \phi$. This is, however, not guaranteed to be a modal relation. 

For example, take some $\mathscr{B}'\subset \mathscr{B}$ and $\mathscr{C}'\subset\mathscr{C}$ s.t. $\mathscr{B}'\not\subseteq \mathscr{C}'$, $\mathfrak{R}_a\mathscr{B}'\mathscr{C}'$ and where $\Vdash_{\mathscr{B}', \mathfrak{R}_A} \phi$, but $\nVdash_{\mathscr{C}', \mathfrak{R}_A} \phi$. Let $\mathscr{C}'$ be the only base s.t. $\mathfrak{R}_a\mathscr{B}'\mathscr{C}'$ other than $\mathscr{B}'$ itself. So we will not have $\mathfrak{R}_a|\phi \mathscr{B}'\mathscr{C}'$.  In this case $(d)$ of Definition \ref{modalRelation} does not hold and $\mathfrak{R}_a|\phi$ is not a modal relation (see Figure \ref{SimpleDisconnect}).

\begin{figure}[ht]
\begin{center}
\begin{tabular}{c p{.7cm} c}

\begin{tikzpicture}[-,shorten >=1pt,node distance =2cm, thick, baseline={([yshift=-1.8ex]current bounding box.center)}] 

\node[label=left:{$\phi$}] (A) {$\mathscr{B}'$};
\node[label=right:{$\cancel{\phi}$}] (B) [right of=A] {$\mathscr{C}'$};
\node[label=left:{$\phi$}] (C) [above of=A] {$\mathscr{B}$};
\node[label=right:{$\phi$}] (D) [right of=C] {$\mathscr{C}$};

\draw[-,below] (C) to node {$a$} (D);
\draw[-,above] (A) to node {$a$} (B);
\draw[dotted, right] (A) to node {} (C);
\draw[dotted, right] (B) to node {} (D);

\end{tikzpicture}

& 

$\Longrightarrow$

&

\begin{tikzpicture}[-,shorten >=1pt,node distance =2cm, thick, baseline={([yshift=-1.8ex]current bounding box.center)}] 

\node[label=left:{$\phi$}] (A) {$\mathscr{B}'$};
\node[label=right:{$\cancel{\phi}$}] (B) [right of=A] {$\mathscr{C}'$};
\node[label=left:{$\phi$}] (C) [above of=A] {$\mathscr{B}$};
\node[label=right:{$\phi$}] (D) [right of=C] {$\mathscr{C}$};

\draw[dotted, right] (A) to node {} (C);
\draw[-, below] (C) to node {$a$} (D);
\draw[dotted, right] (B) to node {} (D);

\end{tikzpicture}

\end{tabular}

\caption{\label{SimpleDisconnect} Updating $\mathfrak{R}_a$ (left) by simply disconnecting $\phi$ and non-$\phi$ bases need not result in a $\mathfrak{R}_a|\phi$ that is a modal relation (right)! }
\end{center}
\end{figure}
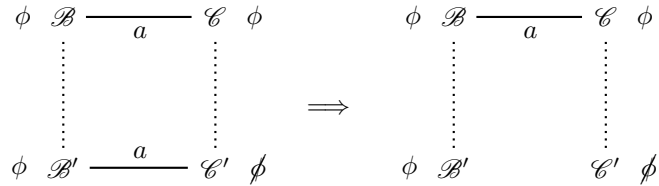

There is no clear way of forcing a relation like $\mathfrak{R}_a|\phi$ to be a modal relation without adding relations between bases that disagree on $\phi$ or by terminating relations between bases that agree on $\phi$. So, our solution to this problem is to consider different $\mathfrak{R}_a|\phi$ depending on at which base we are evaluating the announcement. Given the example above, we want a relation $\mathfrak{R}_a|\phi^\mathscr{B}$ with $\mathfrak{R}_a|\phi^\mathscr{B} \mathscr{B}\mathscr{C}$, because $\phi$ holds at both of these bases, and $\mathfrak{R}_a|\phi^\mathscr{B} \mathscr{B}'\mathscr{C}'$, so that $(d)$ holds, and a relation $\mathfrak{R}_a|\phi^{\mathscr{B}'}$ without $\mathfrak{R}_a|\phi^{\mathscr{B}'} \mathscr{B}'\mathscr{C}'$, because they disagree on $\phi$, and $\mathfrak{R}_a|\phi^{\mathscr{B}'} \mathscr{B}\mathscr{C}$, so that $(c)$, the converse of $(d)$, holds (see Figure \ref{TargettedUpdated}).

\begin{figure}[ht]
\begin{center}
\begin{tabular}{c p{.7cm} c}

\begin{tikzpicture}[-,shorten >=1pt,node distance =2cm, thick, baseline={([yshift=-1.8ex]current bounding box.center)}] 

\node[label=left:{$\phi$}] (A) {$\mathscr{B}'$};
\node[label=right:{$\not{\phi}$}] (B) [right of=A] {$\mathscr{C}'$};
\node[label=left:{$\phi$}] (C) [above of=A] {$\mathscr{B}$};
\node[label=right:{$\phi$}] (D) [right of=C] {$\mathscr{C}$};

\draw[-,below] (C) to node {$a$} (D);
\draw[-,above] (A) to node {$a$} (B);
\draw[dotted, right] (A) to node {} (C);
\draw[dotted, right] (B) to node {} (D);

\end{tikzpicture}

&

&

\begin{tikzpicture}[-,shorten >=1pt,node distance =2cm, thick, baseline={([yshift=-1.8ex]current bounding box.center)}] 

\node[label=left:{$\phi$}] (A) {$\mathscr{B}'$};
\node[label=right:{$\not{\phi}$}] (B) [right of=A] {$\mathscr{C}'$};
\node[label=left:{$\phi$}] (C) [above of=A] {$\mathscr{B}$};
\node[label=right:{$\phi$}] (D) [right of=C] {$\mathscr{C}$};

\draw[dotted, right] (A) to node {} (C);
\draw[dotted, right] (B) to node {} (D);

\end{tikzpicture}

\end{tabular}

\caption{\label{TargettedUpdated} Updating $\mathfrak{R}_a$ from Figure \ref{SimpleDisconnect} by the announcement of $\phi$ results in different modal relations: $\mathfrak{R}_a|\phi^\mathscr{B}$ (left) and $\mathfrak{R}_a|\phi^\mathscr{B'}$ (right). }
\end{center}
\end{figure}
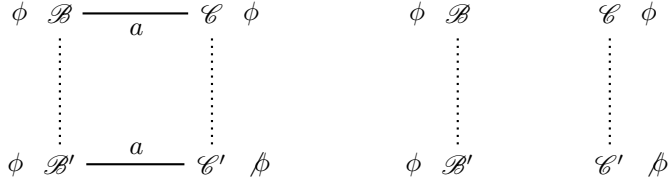

As it turns out, multiple modal relations can fit the bill and work as the $\mathfrak{R}_A|\phi$s we require and we do not have any reason to prefer one over the others. For that reason we treat announcement as a box-like operator that ranges over any such update. Note that it is not uncommon to treat announcements as box-operators, in Kripke semantics, however, there is exactly one modal update that corresponds to each announcement. This is a difference between the two semantic treatments of announcements. Updates in our base-extension semantics are no longer partial functions.

We call the relations we want to consider, together with an extension of our language by propositional atoms, \emph{effective updates} (a formal definition follows in Definition \ref{def:effective}), not to be confused with the notion of successful announcements used in PAL literature.\footnote{A successful announcement is one after which the announced formula is true.} Informally, an effective update at a base is an update such that we cannot get from that base to a base at which the announced formula did not originally hold. Additionally, we add propositional atoms to our bases according to $\pi_\mathscr{B}$ (see Definition \ref{def:UpdateRelations}) that guarantee --- that for any combination of bases reachable from the initial base, bases exist that are only their super- or subsets; that is, for bases $\mathscr{C}$ and $\mathscr{D}$ there exists a base that is the super- or subset of them, but no other base reachable from the initial base. This guarantees that there exists a set of relations that is an update set and corresponds to the update (see Definition \ref{def:EffectiveUpdateRelation} and Lemmas \ref{PALrelationIsModal} and \ref{lem:effective}).



 

We simultaneously define effective updates (Definition \ref{def:effective}) and the support relation  for PAL (Definition \ref{PALValidity}). However, we give the definitions separately to make them easier to reference. Note that, although the definition of effective updates makes use of the support relation (and vice-versa), the effective updates invoked in the definition of the support relation are on formulae of lesser height and so do not cause problems with the definitions.

As might be expected, the case for formulae of the form $[\phi]\psi$ depends on whether $\psi$ holds given effective updates by $\phi$. Accordingly, we define the support relation $\Vdash^\phi_{\mathscr{B},\mathfrak{R}_A}$ for after an announcement of $\phi$. We use $\Delta$ for a sequence of announcements. Maybe more surprisingly, the $K_a\phi$ case is the most changed. If we are evaluating $K_a\phi$, the condition remains unchanged from the modal condition in Definition \ref{UpdateValidity}. However, if we are evaluating $K_a \phi$ after an announcement, then we cannot just consider all modal relations that are effective updates of the initial relation. We also have to check all superset bases of our initial base. This is exactly because we are no longer considering a single modal relation, but rather multiple and so condition (d) of Definition \ref{modalRelation} does not guarantee that all bases reachable by a superset of our initial base also have corresponding subset bases that are reachable from the initial base. Take the example as shown in Figure \ref{TargettedUpdated}, even though for some $\mathfrak{R}_a$ we have $\mathfrak{R}_a\mathscr{B}\mathscr{C}$, we do not necessarily have $\mathfrak{R}'_a\mathscr{B}'\mathscr{C}'$ for some corresponding $\mathfrak{R}'_a$.

Since announcements add propositional atoms, the case for $\bot$ is restricted to our initial language. Otherwise, it and the cases for propositional atoms and implication remain unchanged from their $S5$ versions, given in Definition \ref{UpdateValidity}.






\medskip 

\begin{definition}[Effective Updates]
    \label{def:effective}

    Given a set of atomic formulae $P$, a base $\mathscr{B}\in\Omega_P$, an update set  $\mathfrak{R}_A$ and a formula $\phi$, an update set $\mathfrak{R}'_A$ on $\Omega_{P'}$ is an {\emph effective update} of $\mathfrak{R}_A$ by $\phi$ at $\mathscr{B}$ iff 

    \begin{itemize}

    \item[—] $P' \supseteq P$
    
    \item[—] for every $\mathscr{C}$ reachable from $\mathscr{B}$ there is a $\mathscr{C}^+\supseteq\mathscr{C}$ s.t. $\mathscr{C}^+\in\Omega_{P'}$ and for all classical formulas $\phi$ containing only atomic formulas in $P$ $\Vdash_{C^+,\mathfrak{R}'_A} \phi$ iff $\Vdash_{C,\mathfrak{R}_A} \phi$ and

    \item[—] for all agents $a\in A$ and bases $\mathscr{C}$ and $\mathscr{D}$ reachable by $\mathfrak{R}_A$ from $\mathscr{B}$, 
    \begin{center} 
    $\mathfrak{R}'_a\mathscr{C}^+\mathscr{D}^+$ iff $\mathfrak{R}_a\mathscr{C}\mathscr{D}$, $\Vdash_{\mathscr{C},\mathfrak{R}_A} \phi$, and $\Vdash_{\mathscr{D},\mathfrak{R}_A} \phi$.
    \end{center}
    \end{itemize}

    
    Such a $\mathscr{C}^+$ is called an update of $\mathscr{C}$ for $\mathfrak{R}'_A$.

    
    We let $\mathfrak{R}_A|\phi^\mathscr{B}$ denote the set of effective updates of $\mathfrak{R}_A$ by $\phi$ at $\mathscr{B}$ and $\mathfrak{R}_A|\phi:\psi^\mathscr{B}$ the set obtained by updating the relations in $\mathfrak{R}_A|\phi^\mathscr{B}$ by $\psi$ at the corresponding updates $\mathscr{B}'$ of $\mathscr{B}$. 
    \fillBox 
\end{definition} \medskip

\medskip 

\begin{definition}[Validity]
 \label{PALValidity}
For the logic of public announcements, given a set of agents $A$, a set of atomic formulae $P$, and an update set $\mathfrak{R}_A$ consisting of $S5$-modal relations $\mathfrak{R}_a$, the support relation $\Vdash_{\mathscr{B},\mathfrak{R}_A}^{\Delta}$, where $\Delta$ is a (possibly empty) sequence of formulae, is defined as follows:

\[
\begin{array}{l@{\quad}c@{\quad}l}
\Vdash_{\mathscr{B},\mathfrak{R}_A}^{\Delta} p   & \mbox{iff} & \mbox{$p$ is in every set of basic sentences closed under $\mathscr{B}$} \\  
        & & \mbox{(i.e., iff $p\in \overline{\mathscr{B}}$)} \\
       
\Vdash_{\mathscr{B},\mathfrak{R}_A}^{\Delta} \phi\to\psi & \mbox{iff} & \mbox{$\phi\Vdash_{\mathscr{B},\mathfrak{R}_A}^{\Delta} \psi$} \\ 
\Vdash_{\mathscr{B},\mathfrak{R}_A}^{\Delta} \bot & \mbox{iff} & \mbox{$\Vdash_{\mathscr{B},\mathfrak{R}_A}^{\Delta} p$ for every basic sentence $p\in P$} \\

\Vdash_{\mathscr{B},\mathfrak{R}_A}^{\Delta} K_a \phi & \mbox{iff} & \mbox{-- if $\Delta$ is empty, then for all $\mathscr{B}'$ s.t. $\mathfrak{R}_a\mathscr{B}\mathscr{B}'$, $\Vdash_{\mathscr{B}',\mathfrak{R}_A}^{\Delta} \phi$}\\

& & \mbox{-- if $\Delta$ is non-empty, then for all $\mathscr{C}\supseteq\mathscr{B}$,  
$\mathfrak{R}_A' \in \mathfrak{R}_A|\Delta^\mathscr{C}$,}\\

& & \mbox{\,\; updates $\mathscr{C}^+$ of $\mathscr{C}$ and $\mathscr{C}'$ s.t. $\mathfrak{R}'_a \mathscr{C}^+\mathscr{C'}$, $\Vdash_{\mathscr{C}',\mathfrak{R}_A}^{\Delta} \phi$}\\

\Vdash_{\mathscr{B},\mathfrak{R}_A}^{\Delta} [\phi]\psi & \mbox{iff} &\mbox{for all $\mathscr{C}\supseteq\mathscr{B}$, if $\Vdash_{\mathscr{C},\mathfrak{R}_A}^{\Delta}\phi$, then $\Vdash_{\mathscr{C},\mathfrak{R}_A}^{\Delta , \phi} \psi$}\\

& & \\
\Gamma\Vdash_{\mathscr{B},\mathfrak{R}_A}^{\Delta} \phi & \mbox{iff} & \mbox{for all $\mathscr{C}\supseteq \mathscr{B}$, if $\Vdash_{\mathscr{C},\mathfrak{R}_A}^{\Delta} \psi$ for all $\psi \in \Gamma$, then $\Vdash_{\mathscr{C},\mathfrak{R}_A}^{\Delta} \phi$}\\ 
\end{array} 
\]

\noindent We write $\Vdash_{\mathscr{B}, \mathfrak{R}_A} \phi$ for $\Vdash_{\mathscr{B}, \mathfrak{R}_A}^\Delta \phi$ with empty $\Delta$. Given a set of agents $A$ and a set of atomic formulae $P$, a formula $\phi$ is \textit{valid}, written as $\Vdash \phi$, iff $\Vdash_{\mathscr{B}, \mathfrak{R}_A} \phi$ for all modal bases $\mathscr{B} \in \Omega_P$ and sets of $S5$-modal relations $\mathfrak{R}_A$. 
\fillBox 
\end{definition} \medskip

Here our definition of update sets in Definition \ref{def:UpdateRelations} becomes important. Take the set-up in Figure \ref{fig:FailedUpdate} that can arise using just modal relations rather than full update sets.

If we were to update by $\phi$, there would be no subset of $\mathscr{C}$ to which $\mathscr{B'}$ can reasonably have a relation for agent $a$ to, as a relation between $\mathscr{B}'$ and $\mathscr{C}$ might change what formulae are supported at these bases (and at $\mathscr{B}$ for that matter) in ways that not directly relate to the update. If, however, $\mathscr{B}'$ does not have access to some subset of $\mathscr{C}$, condition (d) no longer holds and no longer have a modal relation.

\begin{figure}[ht]
\begin{center}
\begin{tikzpicture}[-,shorten >=1pt,node distance =2cm, thick, baseline={([yshift=-1.8ex]current bounding box.center)}] 

\node[label=left:{$\phi$}] (A) {$\mathscr{B}'$};
\node[label=right:{$\xcancel{\phi}$}] (B) [right of=A] {$\mathscr{C}'$};
\node[label=left:{$\phi$}] (C) [above of=A] {$\mathscr{B}$};
\node[label=right:{$\phi$}] (D) [right of=C] {$\mathscr{C}$};

\draw[-,below] (C) to node {$a$} (D);
\draw[-,above] (A) to node {$a$} (B);
\draw[-, right] (A) to node {$b$} (C);
\draw[dotted, right] (B) to node {} (D);

\end{tikzpicture}
\end{center}
\label{fig:FailedUpdate}
\end{figure}

 To show that there always is at least one such effective update of $\mathfrak{R}_A$ by $\phi$ at a base $\mathscr{B}$ whenever $\Vdash_{\mathscr{B},\mathfrak{R}_A}\phi$, we define a construction that results in such a relation.

\medskip 

\begin{definition}
    Given an update set $\mathfrak{R}_A$, let $|\mathfrak{R}_A|$ be the set of bases in the domains and ranges of the relations in $\mathfrak{R}_A$; that is,  
    \begin{center}  
    $\mathscr{B}\in |\mathfrak{R}_A|$ iff there exist $\mathfrak{R}_a\in \mathfrak{R}_A$ and $\mathscr{C}$ s.t. $\mathfrak{R}_a\mathscr{B}\mathscr{C}$ or $\mathfrak{R}_a\mathscr{C}\mathscr{B}$.
    \end{center}
    Additionally, let $\Delta(\mathfrak{R}_A)$ be the set of all base rules of the bases in $|\mathfrak{R}_A|$; that is,  
    \[
    \Delta(\mathfrak{R}_A) = \bigcup_{\mathscr{B}\in | \mathfrak{R}_A|}\mathscr{B}
    \]
    \fillBox 
\end{definition} \medskip

We define a construction of a relation after an announcement takes place. We do so by first fixing the relations between bases reachable by our initial base and then fixing the relations between subsets and supersets of those bases in a way that guarantees that the resulting relation is a modal relation.

First, we define relations restricted to the bases reachable from the base at which the effective update is taking place.

\medskip 

\begin{definition}
    \label{def:UpdateS}

Given a set of atomic formulae $P$, an update set $\mathfrak{R}_A$, a consistent base $\mathscr{B}$ and a formula $\phi$ such that  $\Vdash_{\mathscr{B},\mathfrak{R}_A} \phi$, let $S_a$ be $\mathfrak{R}_a$ restricted to bases reachable from $\mathscr{B}$ by $\mathfrak{R}_A$ and let $S_a|\phi \mathscr{C}\mathscr{D}$  iff $S_a \mathscr{C}\mathscr{D}$, $\Vdash_{\mathscr{C},\mathfrak{R}_a} \phi$ and $\Vdash_{\mathscr{D},\mathfrak{R}_a} \phi$. As normally $S_A|\phi$ is the set of $S_a|\phi$ with $a\in A$. Let $S^*_a|\phi$ be $S_a|\phi$ restricted to bases reachable from $\mathscr{B}$ by $S_A|\phi$. Now let $S^*_A|\phi$, again, be the set of all $S^*_a|\phi$ and $\overline{S^*_A|\phi}$ be the set of all bases in the range and domain of the relations in $S^*_A|\phi$. \fillBox 
\end{definition} \medskip \medskip

\noindent This allows us to identify which bases are relevant to the evaluating formulas at our initial base. Next, we define the extension of our language.

\medskip 

\begin{definition}
    \label{def:UpdateP}
Given a set of atomic formulae $P$, an update set of relations $\mathfrak{R}_A$, a consistent base $\mathscr{B}$ and a formula $\phi$ s.t. $\Vdash_{\mathscr{B},\mathfrak{R}_A} \phi$, for some finite $\pi(\mathscr{B})= \{\pi_1,\dots, \pi_n\}$, where 
\begin{itemize}
        \item[—]for all $i$, $\pi_i\subseteq \Omega$
        \item[—]for all $i$, and $\mathscr{C},\mathscr{D}\in \pi_i$, $\mathscr{D}$ is reachable from $\mathscr{C}$ using the sub/superset relation
        \item[—]for all $\mathscr{C}$, $\mathscr{C}$ is reachable from $\mathscr{B}$ using $\mathfrak{R}_A$ iff there is a $\pi_i$ s.t. $\mathscr{C}\in \pi_i$,
\end{itemize}

let $P^+ = P \cup \{p_\emptyset\}\cup\{p_1,\dots,p_n\}$. Note that we know such a $\pi(\mathscr{B})$ exists as $\mathfrak{R}_A$ is an update set. \fillBox
\end{definition} \medskip

We use this extended language to define the bases that are going to be the updates of the initial bases.

\medskip 

\begin{definition}
    \label{def:UpdateBases}

Given a set of atomic formulae $P$, an update set of relations $\mathfrak{R}_A$, a consistent base $\mathscr{B}$, and a formula $\phi$ such that $\Vdash_{\mathscr{B},\mathfrak{R}_A} \phi$, for any base $\mathscr{C}$ reachable from $\mathscr{B}$ by $S^*_A|\phi$, let $\mathscr{C}^+ = \mathscr{C}\cup \{p'_{0},\dots, p'_i\Rightarrow p_\emptyset: i\geq 0 \text{ and for all } j\leq i \text{ and $k$ s.t.  } \mathscr{C} \in \pi_k, p'_j \in \{p_1,\dots,p_n\} \setminus \{p_k\}\} \cup
\{q\Rightarrow p_i: q\in P \text{ and }p_i \in \{p_1,\dots,p_n\} \cup \{p_\emptyset\}\}$.
\fillBox 
\end{definition} \medskip

Using these update bases, we define a function $\delta_{\pi^+}$ that tells us for any base with which of these $\mathscr{B}^+$ they share subsets, similar to our proof of soundness in the appendix. 

\medskip 

\begin{definition}
    \label{def:UpdateDelta}
Given a set of atomic formulae $P$, an update set of relations $\mathfrak{R}_A$, a consistent base $\mathscr{B}$ and a formula $\phi$ s.t. $\Vdash_{\mathscr{B},\mathfrak{R}_A} \phi$,
    
For all $\pi_i \in \pi(\mathscr{B})$, let $\mathscr{\pi}_i^+ = \{\mathscr{C}: \mathscr{C} \in \pi_i \text{ and } \mathscr{C}\in \overline{S^*_a|\phi}\}$ and let $\pi^+= \{\pi^+_i : \pi_i\in \pi \text{  and }\pi^+_i\neq\emptyset\}$.

For a base $\mathscr{C} \in \Omega_{P^+}$, 
        
        \[ 
        \delta_{\pi^+}(\mathscr{C}) =\left\{ \{\pi^+_1,\dots,\pi^+_j\}\middle\vert \begin{array}{l} 
        \{\pi^+_1,\dots,\pi^+_j\}\subseteq \pi^+ \text{ and there is a } \mathscr{C}'\subseteq\mathscr{C}\\ \text{ s.t. for all }\pi^+_i \in \{\pi^+_1,\dots,\pi^+_j\}, \text{there is a} \\  \mathscr{D}\in \pi_i \text{ and } \mathscr{C}'\subseteq\mathscr{D} 
        \end{array}\right\}
        \]
\fillBox 
\end{definition} \medskip

Finally, using the notation established in Definitions \ref{def:UpdateS} - \ref{def:UpdateDelta}, we construct an effective update.

\medskip 

\begin{definition}
    \label{def:EffectiveUpdateRelation}

Given a set of atomic formulae $P$, an update set of relations $\mathfrak{R}_A$, a consistent base $\mathscr{B}$ and a formula $\phi$ s.t. $\Vdash_{\mathscr{B},\mathfrak{R}_A} \phi$, let $\mathfrak{R}_A^+|\phi$ be the set of relations such that, for every agent $a\in A$, $\mathfrak{R}_a^+|\phi \mathscr{B}\mathscr{C}$ only by the following: 

\begin{enumerate}[label=(\arabic{enumi})]
    \item[—]if $S^*_a|\phi\mathscr{B}\mathscr{C}$, then $\mathfrak{R}_a^+|\phi\mathscr{B}^+\mathscr{C}^+$
    \item[—]if $\mathscr{B}$ and $\mathscr{C}$ are inconsistent, then $\mathfrak{R}_a\mathscr{B}\mathscr{C}$
       
    \item[—]If $\mathscr{B}$ and $\mathscr{C}$ \begin{itemize}
        \item[—]are consistent,
        
        \item[—]neither $\mathscr{B}$ nor $\mathscr{C}$ is in the domain or extension of $S^*_A|\phi$
        
        \item[—]there is a $\mathscr{D}^+ \in \overline{S^*_A|\phi}$ s.t. $\mathscr{B}\subseteq\mathscr{D}^+$ iff there is a $\mathscr{E}^+ \in \overline{S^*_A|\phi}$ s.t. $\mathscr{C}\subseteq\mathscr{E}^+$, 

        \item[—]there is a $\mathscr{D}^+ \in \overline{S^*_A|\phi}$ s.t. $\mathscr{B}\supseteq\mathscr{D}^+$ iff there is a $\mathscr{E}^+ \in \overline{S^*_A|\phi}$ s.t. $\mathscr{C}\supseteq\mathscr{E}^+$,

        \item[—]there is a rule $r\in\mathscr{B}$ s.t. $r\notin\mathscr{D}^+$ for all $\mathscr{D}^+ \in \overline{S^*_A|\phi}$ iff there is a rule $r'\in\mathscr{C}$ s.t. $r'\notin\mathscr{D}^+$ for all $\mathscr{D}^+ \in \overline{S^*_A|\phi}$.
        
        \item[—]there are bijective functions $f: \delta_{\pi^+}(\mathscr{B}) \to \delta_{\pi^+}(\mathscr{C})$ and $g: \overline{S^*_A|\phi}\rightarrow \overline{S^*_A|\phi}$ s.t. for every $\delta \in \delta_{\pi^+}(\mathscr{B)}$ and $\mathscr{B}\in \delta$, $g(\mathscr{B})\in f(\delta)$ and $S^*_a|\phi \mathscr{B} g(\mathscr{B})$, 
    
    \end{itemize}
         then $\mathfrak{R}_a\mathscr{B}\mathscr{C}$. 
\end{enumerate}
\fillBox 
\end{definition} \medskip

We now need to show that this is in fact results in $S5$ modal relations.

\medskip 

\begin{lemma}
    \label{PALrelationIsModal}

    For any update set $\mathfrak{R}_A$, $a\in A$, base $\mathscr{B}$ and formula $\phi$ s.t. $\Vdash_{\mathscr{B},\mathfrak{R}_A}\phi$, $R_a|\phi^\mathscr{B}$ is a $S5$-modal relation.   
\end{lemma}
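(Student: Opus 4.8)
The plan is to verify that $R_a|\phi^\mathscr{B}$ satisfies conditions (a)--(d) of Definition~\ref{modalRelation} together with the frame conditions (reflexivity, transitivity, Euclideanness) required for an $S5$-modal relation. The natural strategy is to work through the four-step construction of Definition~\ref{def:UpdateConstruction} layer by layer, tracking which properties survive each step. First I would observe that $S_a^\mathscr{B}$ is simply $\mathfrak{R}_a$ restricted to the reachable bases, so it inherits reflexivity, symmetry (via Euclideanness + reflexivity), transitivity on that set; Step~1's update by $\phi$ preserves these since it only removes pairs where the two bases disagree on $\phi$, and ``$\Vdash\phi$'' is a fixed predicate on bases, so the set of $\phi$-bases is an equivalence-closed condition under an equivalence relation. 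Step~2 restricts to the new reachable set, which again preserves equivalence-relation structure on what remains. The key conceptual point is that after Steps~1--2 we have, on a set of mutually reachable bases all satisfying $\phi$ (or none of them, in degenerate cases), an honest $S5$-relation; Steps~3 and 4 then extend it downward (to subsets) and upward (to supersets, including all inconsistent bases) exactly so as to restore conditions (c) and (d).

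Next I would check conditions (a) and (b): condition (b) (consistent bases relate only to consistent bases) holds because Steps~1--3 only ever relate bases already appearing in $\mathfrak{R}_a$ or consistent subsets/supersets thereof, and Step~4's new clauses are explicitly gated on consistency; the ``or $\mathscr{C}$ and $\mathscr{D}$ are inconsistent'' clause of Step~4 is what secures condition (a), since every inconsistent base is then related to some inconsistent base and to no consistent one. For (c) and (d) I would argue in two regimes. Within $|S^*_A|\phi^\mathscr{B}|$ and its subsets, condition (d) is precisely what the Step~3 clause is designed to enforce: the ``for all $\mathscr{C}'\supset\mathscr{C}$ there exists $\mathscr{D}'\supset\mathscr{D}$ with $S^*_a|\phi^\mathscr{B}\mathscr{C}'\mathscr{D}'$'' half gives (c) when read as going from a base to its supersets, and the converse half gives (d); one checks that when $\mathscr{C},\mathscr{D}$ already lie in $|S^*_A|\phi^\mathscr{B}|$ the original relation witnesses the requirement. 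For the supersets handled in Step~4 the same bookkeeping applies with the roles of subset/superset swapped, and the extra clause $\mathscr{C}\setminus\Delta(T_A|\phi^\mathscr{B}) = \mathscr{D}\setminus\Delta(T_A|\phi^\mathscr{B})$ is needed to keep the relation from collapsing bases that differ on genuinely new rules, which would otherwise break transitivity or Euclideanness.

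The remaining and most delicate task is the frame conditions --- reflexivity, transitivity, and Euclideanness --- for the final relation $R_a|\phi^\mathscr{B}$, because here the ``defined by a quantified condition on subsets/supersets'' clauses of Steps~3 and 4 interact nontrivially. Reflexivity is easy: every base is trivially related to itself by all three clauses (taking $\mathscr{C}' = \mathscr{D}'$). For transitivity and Euclideanness I would do a case analysis on which clause justifies each of the two input pairs $R_a|\phi^\mathscr{B}\mathscr{C}\mathscr{D}$ and $R_a|\phi^\mathscr{B}\mathscr{D}\mathscr{E}$: the ``both in $|S^*_A|\phi^\mathscr{B}|$'' case reduces to the corresponding property of the honest $S5$-relation from Step~2; the ``subset-defined'' and ``superset-defined'' cases chain the witnessing supersets/subsets together, using transitivity of $\subseteq$ and of the underlying $S^*$ or $T$ relation; and the mixed cases need the observation that a base in $|T_A|\phi^\mathscr{B}|$ that is related by the superset clause is actually already related by the core relation. \textbf{I expect the main obstacle} to be precisely this verification of transitivity and Euclideanness across the Step~4 clause, where one must simultaneously juggle the subset witnesses, the inconsistency clause, and the ``same new rules'' side condition; getting the quantifier alternations to line up (an $\exists$ witness for one pair must be reusable, or composable, for the chained pair) is the technical heart of the argument, and is likely where condition (d) of the modal-relation definition gets used as an auxiliary fact about $\mathfrak{R}_a$ itself rather than just proved for the output.
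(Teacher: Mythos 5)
Your plan matches the paper's proof in both structure and substance: the paper likewise dispatches conditions (a)--(b) by noting all inconsistent bases are connected, establishes (c) and (d) by a case split on whether the bases lie in $|T_A|\phi^\mathscr{B}|$ or in the Step-4 extension (using the fact that the original $\mathfrak{R}_a$ is a modal relation as an auxiliary input, exactly as you anticipate), notes that $S^*_a|\phi$ inherits the equivalence-relation structure, checks reflexivity explicitly, and leaves transitivity and Euclideanness to ``analogous reasoning''. So your proposal is essentially the same argument; if anything, you treat the transitivity/Euclidean verification across the Step-4 clause, which you rightly flag as the delicate point, in more detail than the paper does.
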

\begin{proof}
This proof follows the same strategy as the proof of soundness for $S5$ as the construction is largely the same.

 We start by showing that the resulting relations are equivalence relations on an arbitrary $\mathfrak{R}_a$. Note that they trivially hold for all relations added by (2). 
    
    For reflexivity, it suffices to point out that $S_a$, as defined in Definition \ref{def:UpdateS}, is reflexive and so for any $\mathscr{C}^+$, we have $\mathfrak{R}_a \mathscr{C}^+\mathscr{C}^+$. For all other bases note that we can simply take functions $f(\delta)=\delta$ and $g(\mathscr{X})=\mathscr{X}$ and so $\mathfrak{R}_a\mathscr{B}\mathscr{B}$. 
    
    For transitivity, from $\mathfrak{R}_a\mathscr{C}^+\mathscr{D}^+$ and $\mathfrak{R}_a\mathscr{D}^+\mathscr{E}^+$, it follow that $\mathfrak{R}_a\mathscr{C}^+\mathscr{E}^+$, simply by the transitivity of $S_a$. For the other cases note that there are functions $f$ from $\delta_{\pi^+}(\mathscr{B})$ to $\delta_{\pi^+}(\mathscr{C})$ and $f'$ $\delta_{\pi^+}(\mathscr{C})$ to $\delta_{\pi^+}(\mathscr{D})$, so we can take the new function $f'':\delta_{\pi^+}(\mathscr{B)\rightarrow\delta_{\pi^+}(\mathscr{D)}}$ with $f''(\delta)=f'(f(\delta))$ and function $g$ and $g'$,  let $g''(\mathscr{X})=g'(g(\mathscr{X}))$. So, by (3) $\mathfrak{R}_a\mathscr{B}\mathscr{D}$.
    The Euclidean case follows analogously.

    To show this is an update relation we go through the conditions for modal relations in Definition \ref{modalRelation} and update sets in Definition \ref{def:UpdateRelations} and show that these hold for the relations defined above.
    
    As step (2) is the only one concerning inconsistent bases and remains unchanged from our proof in \cite{EckhardtPymS5}, conditions (a) and (b) hold trivially.
    

     We proceed to show (c) and (d). To show (c), take some $\mathfrak{R}_a\mathscr{B}\mathscr{C}$. There are three cases to consider: 

    First, let $\mathscr{B}= \mathscr{B}^+$ and $\mathscr{C}=\mathscr{C}^+$. We know that $\delta_{\pi^+}(\mathscr{B}^+) = \{\delta| \delta\subset \overline{S_A^*|\phi} \text{ and } \mathscr{B}^+\in \delta\}$ and $\delta_{\pi^+}(\mathscr{C}^+) = \{\delta| \delta\subset \overline{S_A^*|\phi} \text{ and } \mathscr{C}^+\in \delta\}$ and let $g(\mathscr{\mathscr{B}^+)}=\mathscr{C}^+$, $g(\mathscr{\mathscr{C}^+)}=\mathscr{B}^+$, and $g(\mathscr{D)=D}$ for all other $\mathscr{D}$. Accordingly $f(\delta)= \{g(\mathscr{D})|\mathscr{D}\in \delta\}$. Let $g'(\mathscr{D})=g(\mathscr{D})$ and let $f'(\delta) = \{g'(\mathscr{D})|\mathscr{D}\in\delta\}$. So, for any $\mathscr{C}'\supseteq\mathscr{C}$ s.t. $\delta_{\pi^+}(\mathscr{C}') = \{f'(\delta)|\delta\in\delta_{\pi^+}(\mathscr{B}')\}$, we have $\mathfrak{R}_a\mathscr{B}'\mathscr{C}'$. We obtain such a $\mathscr{C}'$  by simply adding rules to $\mathscr{C}$ that correspond to those $\delta$ s.t. $\delta\in \delta_w(\mathscr{C}')$ but $\delta\notin \delta_w(\mathscr{C})$.
    
    For the second case, if $\mathscr{B}'= \mathscr{B}^+$, then there is a $\mathscr{C}^+\supseteq\mathscr{C}$ with $\mathfrak{R}_a \mathscr{B}^+\mathscr{C}^+$, as $\mathfrak{R}_a\mathscr{B}\mathscr{C}$ has to hold by (3).

    For all other $\mathscr{B}$, $\mathscr{C}$, and $\mathscr{B}'$, we know that $\mathfrak{R}_a\mathscr{B}\mathscr{C}$ has to hold by (3) and so there are $f$ and $g$. Additionally, $\delta_{\pi^+}(\mathscr{B})\subseteq\delta_{\pi^+}(\mathscr{B}')$. So, this case follows the same as the case in which $\mathscr{B}=\mathscr{B}^+$:  For all $\mathscr{D}$, let $g'(\mathscr{D})=g(\mathscr{D})$ and let $f'(\delta) = \{g'(\mathscr{D})|\mathscr{D}\in\delta\}$. So, for any $\mathscr{C}'\supseteq\mathscr{C}$ s.t. $\delta_{\pi^+}(\mathscr{C}') = \{f'(\delta)|\delta\in\delta_{\pi^+}(\mathscr{B}')\}$, we have $\mathfrak{R}_a\mathscr{B}'\mathscr{C}'$. We obtain such a $\mathscr{C}'$  by simply adding rules to $\mathscr{C}$ that correspond to those $\delta$ s.t. $\delta\in \delta_{\pi^+}(\mathscr{C}')$ but $\delta\notin \delta_{\pi^+}(\mathscr{C})$.

    Condition (d) follows if we straightforwardly modify this to deal with subsets rather than supersets. Note that simply that where we had $\delta_{\pi^+}(\mathscr{B})\subseteq\delta_{\pi^+}(\mathscr{B}')$, we now have $\delta_{\pi^+}(\mathscr{B}')\subseteq\delta_{\pi^+}(\mathscr{B})$ instead and so we remove rules corresponding to the $\delta \in \delta_{\pi^+}(\mathscr{B})$ that are not in $\delta_{\pi^+}(\mathscr{B}')$ rather than adding new rules.

    



    


    To show that this is a update set, we first show that any $\pi^+_i$ only contains bases that are reachable from each other via the sub/superset. Take any $\mathscr{C}$ and $\mathscr{C}'$ in $\pi^+_i$ and let $\mathscr{C}'' = \mathscr{C}\cup\mathscr{C}'$. If $\mathscr{C}= \mathscr{C}'$ this is trivial. There are now two cases to consider: If it is not the case that $\mathscr{C}'' = \mathscr{B}^+$ for some $\mathscr{B}^+$, then simply $\mathscr{C}'' \in \pi^+_i$ and as it is the superset of both $\mathscr{C}$ and $\mathscr{C}'$ we are done. Otherwise, take some rule $r$ that is in $\mathscr{C}$ but not in $\mathscr{C}'$. The base $(\mathscr{C}'' - r)$ is then a superset of $\mathscr{C}'$. Now we remove a rule $r'$ that is in $\mathscr{C}'$ but not in $\mathscr{C}$ and get $((\mathscr{C}''-r)-r')$ adding $r$ back in results in $(\mathscr{C}''-r')$ which is a superset of $\mathscr{C}$. Obviously, $((\mathscr{C}''-r)-r')$ is a subset of both $(\mathscr{C}''-r)$ and $(\mathscr{C}'' -r')$. As we have only manipulated rules of $\mathscr{C}$ and $\mathscr{C}'$, $(\mathscr{C}'' -r)$, $((\mathscr{C}'' - r) - r')$, and $(\mathscr{C}'' -r')$ are members of $\pi^+_i$ and so $\mathscr{C}'$ can be reached from $\mathscr{C}$ through them.
    
    
    So, finally, we show that for $\mathscr{B}'\subseteq\mathscr{B}$ s.t. $\mathscr{B}'$ is reachable from $\mathscr{B}$, $\mathscr{B}$ and $\mathscr{B}'$ share access to the same bases.

    Step (1) only add relations between maximally-consistent sets. Two maximally-consistent sets can only be subsets of each other if they are the same set and so also share the same bases they have access to.

   Step (2) only adds relations between inconsistent sets and our update condition only restrict consistent sets.


    The only thing left is to look at the (3) steps. 
    Note that, for any sub- or superset of a base it either has the same $\delta_{\pi^+}$ or one with a higher or lower length. If the length is different, the super-/subset cannot be reachable from the inital base as $(3)$ requires the length to be equal for two bases to be connected, as it requires a bijective function between them, and if it's the same, then the bases get connected to the same bases by $(3)$. 
\end{proof}

As we have shown, the resulting relations are an update set, however, we also need to show that they  constitute an effective update. 

\medskip 

 \begin{lemma} \label{lem:effective}
    For any update set $\mathfrak{R}_A$ and formulae $\phi$ such that $\Vdash_{\mathscr{B},\mathfrak{R}_A} \phi$, $R_A|\phi^\mathscr{B}$ is an effective update of $\mathfrak{R}_A$ at $\mathscr{B}$ by $\phi$, that is, $R_A|\phi^\mathscr{B} \in \mathfrak{R}_A|\phi^\mathscr{B}$. 
    \end{lemma}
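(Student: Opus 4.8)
The plan is to unfold Definition~\ref{def:effective} and check directly that $R_A|\phi^\mathscr{B}$ meets its defining condition: for every agent $a\in A$ and all bases $\mathscr{C},\mathscr{D}$ reachable from $\mathscr{B}$ by $\mathfrak{R}_A$, we must show that $R_a|\phi^\mathscr{B}\,\mathscr{C}\mathscr{D}$ holds if and only if $\mathfrak{R}_a\mathscr{C}\mathscr{D}$, $\Vdash_{\mathscr{C},\mathfrak{R}_A}\phi$ and $\Vdash_{\mathscr{D},\mathfrak{R}_A}\phi$ all hold. I would fix $a$ and such $\mathscr{C},\mathscr{D}$ and argue the two directions separately, using throughout Lemma~\ref{PALrelationIsModal} (so that $R_a|\phi^\mathscr{B}$ is already known to be an equivalence relation) and the hypothesis $\Vdash_{\mathscr{B},\mathfrak{R}_A}\phi$. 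One preliminary observation carries much of the weight: since Step~1 of Definition~\ref{def:UpdateConstruction} keeps an $S_A^\mathscr{B}$-edge only when its two endpoints agree on $\phi$, the truth value of ``$\Vdash_{(\cdot),\mathfrak{R}_A}\phi$'' is constant along every $S_A|\phi^\mathscr{B}$-path; as it is true at $\mathscr{B}$, it is true at every base in $|S^*_A|\phi^\mathscr{B}|$. I would also dispose at once of the degenerate case in which $\mathscr{B}$ is inconsistent: by (a) of Definition~\ref{modalRelation} every $\mathfrak{R}_A$-reachable base is then inconsistent, by Lemma~\ref{EFQ} each supports $\phi$, and the claim for inconsistent bases can be read off the construction directly; so from now on take $\mathscr{B}$, hence (by (b)) every base reachable from it, to be consistent.

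For the right-to-left direction, assume $\mathfrak{R}_a\mathscr{C}\mathscr{D}$ and that $\phi$ is supported at both $\mathscr{C}$ and $\mathscr{D}$. It suffices to establish $S^*_a|\phi^\mathscr{B}\,\mathscr{C}\mathscr{D}$, since $S^*_a|\phi^\mathscr{B}\subseteq T_a|\phi^\mathscr{B}\subseteq R_a|\phi^\mathscr{B}$. The pair $\mathscr{C},\mathscr{D}$ lies in $S_a^\mathscr{B}$ (both bases are $\mathfrak{R}_A$-reachable from $\mathscr{B}$ and they are $\mathfrak{R}_a$-related) and the edge survives Step~1 (they agree on $\phi$), so the only thing left to check is that it survives Step~2, i.e.\ that $\mathscr{C}$ and $\mathscr{D}$ remain reachable from $\mathscr{B}$ by $S_A|\phi^\mathscr{B}$. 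Here I would take a witnessing $\mathfrak{R}_A$-path out of $\mathscr{B}$, collapse maximal single-agent runs by transitivity so that consecutive steps use distinct agents, and then, working from $\Vdash_{\mathscr{B},\mathfrak{R}_A}\phi$, use conditions (c) and (d) of Definition~\ref{modalRelation} to produce a parallel path whose intermediate bases all support $\phi$; such a path lies in $S_A|\phi^\mathscr{B}$. With $S^*_a|\phi^\mathscr{B}\,\mathscr{C}\mathscr{D}$ in hand, $R_a|\phi^\mathscr{B}\,\mathscr{C}\mathscr{D}$ is immediate.

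For the left-to-right direction, assume $R_a|\phi^\mathscr{B}\,\mathscr{C}\mathscr{D}$ with $\mathscr{C},\mathscr{D}$ reachable from $\mathscr{B}$. I would trace this edge backwards through the four steps of Definition~\ref{def:UpdateConstruction}. The Step~4 clauses relate only bases that are inconsistent or that lie outside $|T_A|\phi^\mathscr{B}|$, and the Step~3 clause contributes, beyond $S^*_a|\phi^\mathscr{B}$, only edges between bases outside $|S^*_A|\phi^\mathscr{B}|$; using reflexivity of $S^*_a|\phi^\mathscr{B}$ and $T_a|\phi^\mathscr{B}$ at $\phi$-bases to locate $\mathscr{C},\mathscr{D}$ inside the relevant domains, one concludes that on the reachable-from-$\mathscr{B}$ core these extra clauses add nothing, so the edge already belongs to $S^*_a|\phi^\mathscr{B}$. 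Unwinding the definitions of $S^*_a|\phi^\mathscr{B}$, $S_a|\phi^\mathscr{B}$ and $S_a^\mathscr{B}$ then yields $\mathfrak{R}_a\mathscr{C}\mathscr{D}$ together with the facts that $\mathscr{C}$ and $\mathscr{D}$ agree on $\phi$ and are reachable from $\mathscr{B}$ by $S_A|\phi^\mathscr{B}$; the preliminary observation upgrades ``agree on $\phi$'' to ``both support $\phi$'', which completes the equivalence.

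The step I expect to be the main obstacle is the Step~2 bookkeeping in the right-to-left direction: showing that an $\mathfrak{R}_A$-path between two $\phi$-satisfying bases can be rerouted so as to stay within the $\phi$-satisfying bases. The corresponding statement is false for arbitrary Kripke models, so the argument must genuinely exploit the fact that modal relations are not arbitrary but are forced to be ``fat'' by conditions (c) and (d) of Definition~\ref{modalRelation}, together with $\Vdash_{\mathscr{B},\mathfrak{R}_A}\phi$ --- indeed this is precisely why the update is performed in the staged manner of Definition~\ref{def:UpdateConstruction}. Dually, the delicate point in the left-to-right direction is verifying rigorously that the Step~3 and Step~4 clauses introduce no edge between two bases both reachable from $\mathscr{B}$ that was not already present in $S^*_a|\phi^\mathscr{B}$.
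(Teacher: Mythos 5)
Your overall strategy --- unfold Definition \ref{def:effective} and verify the biconditional on the $\mathfrak{R}_A$-reachable bases by tracing the four steps of Definition \ref{def:UpdateConstruction} --- is the same strategy the paper takes, only spelled out: the paper's proof consists of the remark that $S^*_A|\phi^\mathscr{B}$ already ``fulfills the necessary criteria'' and that Steps 3 and 4 only alter proper sub- and supersets of the bases in $|S^*_A|\phi^\mathscr{B}|$. The problem is that your right-to-left direction is not actually closed. After Step 2, an $\mathfrak{R}_a$-edge between two $\phi$-supporting, $\mathfrak{R}_A$-reachable bases $\mathscr{C},\mathscr{D}$ survives only if $\mathscr{C}$ and $\mathscr{D}$ are reachable from $\mathscr{B}$ by $S_A|\phi^\mathscr{B}$, i.e.\ (since $\Vdash_{\mathscr{B},\mathfrak{R}_A}\phi$) by a path every base of which supports $\phi$. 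You correctly single this rerouting claim out as the crux, but the justification you sketch --- collapse single-agent runs by transitivity and then invoke conditions (c) and (d) of Definition \ref{modalRelation} --- does not deliver it: (c) and (d) quantify over subset and superset bases of the endpoints of a single edge and say nothing about whether $\phi$ is supported at the intermediate bases of any alternative path; they give ``vertical'' closure under base extension and restriction, not ``horizontal'' rerouting through the $\phi$-supporting region. Nothing in (a)--(d) or in the equivalence-relation requirements excludes a configuration in which $\mathscr{C}$ is reachable from $\mathscr{B}$ only through a base at which $\phi$ fails, and at that point your argument (and with it the verification that $S^*_a|\phi^\mathscr{B}$, hence $R_a|\phi^\mathscr{B}$, retains the edge $\mathscr{C}\mathscr{D}$) has no way to proceed. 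So the proposal is incomplete at exactly the step you flag as the main obstacle.

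For comparison: the paper does not supply this argument either --- it treats the adequacy of $S^*_A|\phi^\mathscr{B}$ on the reachable core as immediate --- so you have exposed what its proof leaves implicit rather than diverged from it; but as a standalone proof your attempt does not establish the lemma. Two smaller remarks: your left-to-right direction relies on reading Step 3 as the surrounding prose intends (the second disjunct applying only to bases outside $|S^*_A|\phi^\mathscr{B}|$); taken literally, its superset clauses can be satisfied vacuously by bases of $|S^*_A|\phi^\mathscr{B}|$ with no proper supersets there (e.g.\ maximally-consistent ones), which would create edges your back-tracing does not account for. And citing Lemma \ref{PALrelationIsModal} only gives that $R_a|\phi^\mathscr{B}$ is an $S5$-modal relation; it contributes nothing to the effectiveness biconditional itself, so it cannot be used to paper over either point.
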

    \begin{proof}
    As shown in Lemma \ref{PALrelationIsModal}, $\mathfrak{R}_A|\phi^\mathscr{B}$ is an update set.
    To show that the update was effective we have to to show that the two conditions in Definition \ref{def:effective} are fulfilled. 
    Obviously, $P^+ \supseteq P$.
    
    The second condition is that for every $\mathscr{C}$ reachable from $\mathscr{B}$ there is a $\mathscr{C}'\supseteq\mathscr{C}$ s.t. $\mathscr{C}'\in\Omega_{P'}$ and for all classical formulas $\phi$ containing only atomic formulas in $P$ $\Vdash_{C',\mathfrak{R}'_A} \phi$ iff $\Vdash_{C,\mathfrak{R}_A} \phi$. It is easy to see that for any $\mathscr{C}$ there is such a $\mathscr{C}^+$. For formulae involving $\bot$, note that since $(q\Rightarrow p_i)$ for all the $p_i$ added, in any circumstance in which $\bot$ held at $\mathscr{C}$, it will still hold at $\mathscr{C}^+$. 
        
    The third condition is that for all agents $a\in A$ and bases $\mathscr{C}$ and $\mathscr{D}$ reachable by $\mathfrak{R}_A$ from $\mathscr{B}$, $\mathfrak{R}'_a\mathscr{C}\mathscr{D}$ iff $\mathfrak{R}_a\mathscr{C}\mathscr{D}$, $\Vdash_{\mathscr{C},\mathfrak{R}_A} \phi$, and $\Vdash_{\mathscr{D},\mathfrak{R}_A} \phi$.
    It suffices to point out that $S^*_A|\phi^\mathscr{B}$ fulfills this criteria and the construction $\mathfrak{R}_A|\phi^\mathscr{B}$ only alter proper sub- and supersets of the bases in $|S^*_A|\phi^\mathscr{B}|$, but not those bases themselves. 
    \end{proof}



\medskip 

\begin{cor}
    For any $\mathfrak{R}_A$, $\mathscr{B}$, and $\phi$ s.t. $\Vdash_{\mathscr{B},\mathscr{R}_A} \phi$, there is an effective update of $\mathfrak{R}_A$ by $\phi$ at $\mathscr{B}$.
\end{cor}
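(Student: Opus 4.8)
The plan is to observe that this corollary is an immediate consequence of the two preceding lemmas, so essentially no new work is required. Concretely, Definition~\ref{def:UpdateConstruction} provides an \emph{explicit} candidate relation $R_A|\phi^\mathscr{B}$, built in four steps from the reachable part of $\mathfrak{R}_A$ at $\mathscr{B}$; Lemma~\ref{PALrelationIsModal} shows each $R_a|\phi^\mathscr{B}$ is an $S5$-modal relation; and Lemma~\ref{lem:effective} shows that, under the hypothesis $\Vdash_{\mathscr{B},\mathfrak{R}_A}\phi$, this relation is an effective update of $\mathfrak{R}_A$ by $\phi$ at $\mathscr{B}$, i.e. $R_A|\phi^\mathscr{B}\in\mathfrak{R}_A|\phi^\mathscr{B}$. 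So I would simply record that the set $\mathfrak{R}_A|\phi^\mathscr{B}$ is witnessed to be nonempty by $R_A|\phi^\mathscr{B}$.

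In more detail, the first step I would take is to invoke Lemma~\ref{lem:effective} with the given $\mathfrak{R}_A$, $\mathscr{B}$, and $\phi$; the hypothesis $\Vdash_{\mathscr{B},\mathscr{R}_A}\phi$ is exactly the side condition that lemma requires. The second (and only other) step is to note that ``$R_A|\phi^\mathscr{B}\in\mathfrak{R}_A|\phi^\mathscr{B}$'' is, by Definition~\ref{def:effective}, precisely the assertion that $R_A|\phi^\mathscr{B}$ is an effective update of $\mathfrak{R}_A$ by $\phi$ at $\mathscr{B}$, which is what the corollary claims exists. Hence the corollary holds.

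Since the corollary is of the ``existence'' form, there is nothing genuinely hard left here: the real content --- verifying that the four-step construction never breaks conditions (a)--(d) of Definition~\ref{modalRelation} and preserves reflexivity, transitivity, and the Euclidean property --- has already been discharged in Lemma~\ref{PALrelationIsModal}, and the check that the reachable core $S^*_A|\phi^\mathscr{B}$ (hence the whole construction on the relevant bases) respects the ``agrees on $\phi$'' requirement of Definition~\ref{def:effective} has been discharged in Lemma~\ref{lem:effective}. If I were to anticipate any subtlety at all, it would only be to make sure the hypothesis of the corollary ($\Vdash_{\mathscr{B},\mathfrak{R}_A}\phi$, with the empty announcement sequence) matches the hypothesis of Lemma~\ref{lem:effective} verbatim, which it does. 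Thus the proof is a direct appeal to Lemma~\ref{lem:effective}.
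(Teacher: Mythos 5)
Your proof is correct and matches the paper's reasoning: the corollary is stated immediately after Lemma~\ref{lem:effective} precisely because the constructed relation $R_A|\phi^\mathscr{B}$ of Definition~\ref{def:UpdateConstruction}, shown to be an $S5$-modal relation in Lemma~\ref{PALrelationIsModal} and an effective update in Lemma~\ref{lem:effective}, witnesses the existence claim. No further argument is needed.
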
 \medskip

    Another important insight about effective updates is that they are equivalent when it comes to their judgements about formulae restricted to our initial P. This means that we only ever have to consider one effective update when checking whether an announcement formula is supported at a base.

\medskip 

    \begin{lemma}
        \label{lem:EquivalenceOfEffectiveUpdates}

        For any sequence of formulae $\Delta$, base $\mathscr{B}$, update relation $\mathfrak{R}_A$, and effective updates of $\mathfrak{R}_A$ by $\Delta$ at $\mathscr{B}$ $\mathfrak{R}^1$ and $\mathfrak{R}^2$ with $\mathscr{B^1}$ and $\mathscr{B}^2$ the updates of $\mathscr{B}$, respectively, for any formula $\chi$ restricted to $P$, 

        \[
        \Vdash_{\mathscr{B}^1,\mathfrak{R}^1}\chi \text{ iff } \Vdash_{\mathscr{B}^2,\mathfrak{R}^2}\chi    
        \]
        
    \end{lemma}

    \begin{proof}

    We show this by induction on $\chi$. Note that, for atomic $\chi$ and $\chi =\bot$ this follows directly from the definition of effective updates in Definition \ref{def:effective}.

    For $\chi=(\chi_1\to\chi_2)$, it suffices to point out that, by induction hypothesis, $\Vdash_{\mathscr{B}^1,\mathfrak{R}^1}\chi_1$ iff $\Vdash_{\mathscr{B}^2,\mathfrak{R}^2}\chi_1$ and the same for $\chi_2$.

    For $\chi= K_a\chi_1$, note that for all $\mathscr{C}^1$ s.t. $\mathfrak{R}^1_A \mathscr{B}^1\mathscr{C}^1$, by Definition \ref{def:effective}, there is a $\mathscr{C}$ s.t. $\mathfrak{R}_A \mathscr{B}\mathscr{C}$ and so there is also going to be a $\mathscr{C}^2$ s.t. $\mathfrak{R}^2_A \mathscr{B}^2 \mathscr{C}^2$ and vice versa. By induction hypothesis, $\Vdash_{\mathscr{C}^1,\mathfrak{R}^1}\chi_1$ iff $\Vdash_{\mathscr{C}^2,\mathfrak{R}^2}\chi_1$.

    For $\chi=[\chi_1]\chi_2$, $\nVdash_{\mathscr{B}^1,\mathfrak{R}^1}\chi_1$ iff $\nVdash_{\mathscr{B}^2,\mathfrak{R}^2}\chi_1$. However, if $\Vdash_{\mathscr{B}^1,\mathfrak{R}^1}\chi_1$ and $\Vdash_{\mathscr{B}^2,\mathfrak{R}^2}\chi_1$, then note that an effective update of $\mathfrak{R}_A^1$ at $\mathscr{B}^1$ by $\chi_1$ is an effective update of $\mathscr{R}_A$ at $\mathscr{B}$ by $\Delta;\chi_1$ and the same goes for effective updates of $\mathfrak{R}_A^2$ at $\mathscr{B}^2$ by $\chi_1$. So, by induction hypothesis, $\Vdash^{\chi_1}_{\mathscr{B}^1,\mathfrak{R}^1}\chi_2$ iff $\Vdash^{\chi_1}_{\mathscr{B}^2,\mathfrak{R}^2}\chi_2$.
    \end{proof}

 We now move to show soundness and completeness for our system. We start by showing that the axioms and rules of $S5$ modal logic still hold as they did before adding announcements.

For this we need to adept some lemmas from the modal case. First we show that, even with our new updated relations, a formula that holds at a base will also hold at its supersets. 

\medskip 

 \begin{lemma}
    \label{PALMonotonicity}
     If $\Gamma\Vdash_{\mathscr{B},\mathfrak{R}_A}^{\Delta} \phi$ and $\mathscr{B}\subseteq\mathscr{C}$, then $\Gamma\Vdash_{\mathscr{C},\mathfrak{R}_A}^{\Delta} \phi$.
        
\end{lemma}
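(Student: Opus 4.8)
The plan is to prove this by induction on the height of the formula $\phi$, with a simultaneous induction on the length of the announcement sequence $\Delta$, mirroring the structure of Lemma \ref{ModalMonotonicity} but now having to account for the announcement superscript. For non-empty $\Gamma$ the statement is immediate from the validity clause for $\Gamma \Vdash^\Delta$, since that clause already quantifies over all supersets of the base. So assume $\Gamma$ is empty. The atomic case follows because $p \in \overline{\mathscr{B}}$ and $\mathscr{B} \subseteq \mathscr{C}$ gives $p \in \overline{\mathscr{C}}$; the cases for $\bot$ and $\phi \to \psi$ are unchanged from the classical/$S5$ treatment since those clauses do not mention $\mathfrak{R}_A$ and the implication clause again quantifies over supersets.

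The two genuinely new cases are $\phi = K_a \psi$ and $\phi = [\chi]\psi$. For $[\chi]\psi$: suppose $\Vdash^\Delta_{\mathscr{B},\mathfrak{R}_A} [\chi]\psi$, i.e.\ for all $\mathscr{D} \supseteq \mathscr{B}$ with $\Vdash^\Delta_{\mathscr{D},\mathfrak{R}_A}\chi$ we have $\Vdash^{\Delta,\chi}_{\mathscr{D},\mathfrak{R}_A}\psi$. Given $\mathscr{C} \supseteq \mathscr{B}$, any $\mathscr{D} \supseteq \mathscr{C}$ is also a superset of $\mathscr{B}$, so the required implication transfers directly; no induction hypothesis is even needed here, only transitivity of $\subseteq$. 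For $K_a\psi$ with $\Delta$ empty, the argument is exactly that of Lemma \ref{ModalMonotonicity}: condition (d) of Definition \ref{modalRelation} provides, for each $\mathscr{E}$ with $\mathfrak{R}_a\mathscr{C}\mathscr{E}$, some $\mathscr{D} \subseteq \mathscr{E}$ with $\mathfrak{R}_a\mathscr{B}\mathscr{D}$, and then the induction hypothesis for $\psi$ (a formula of smaller height, with the same empty $\Delta$) upgrades $\Vdash^\Delta_{\mathscr{D}}\psi$ to $\Vdash^\Delta_{\mathscr{E}}\psi$.

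The main obstacle is the case $\phi = K_a\psi$ with $\Delta$ \emph{non-empty}, because here the validity clause already quantifies over all supersets $\mathscr{C}' \supseteq \mathscr{B}$ and over all effective updates $\mathfrak{R}'_A \in \mathfrak{R}_A|\Delta^{\mathscr{C}'}$. One might hope this makes monotonicity trivial, and indeed: if $\Vdash^\Delta_{\mathscr{B},\mathfrak{R}_A} K_a\psi$ holds then for every $\mathscr{D} \supseteq \mathscr{B}$, every $\mathfrak{R}'_A \in \mathfrak{R}_A|\Delta^{\mathscr{D}}$, and every $\mathscr{D}'$ with $\mathfrak{R}'_a\mathscr{D}\mathscr{D}'$ we have $\Vdash^\Delta_{\mathscr{D}',\mathfrak{R}_A}\psi$; since any $\mathscr{C} \supseteq \mathscr{B}$ has every one of its supersets among the supersets of $\mathscr{B}$, the same universally quantified statement holds with $\mathscr{B}$ replaced by $\mathscr{C}$. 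The subtlety to check carefully — and the point I would spell out — is that the set of effective updates is indexed by the evaluation base, so one must confirm that ranging over $\{(\mathscr{D}, \mathfrak{R}'_A) : \mathscr{D} \supseteq \mathscr{C},\ \mathfrak{R}'_A \in \mathfrak{R}_A|\Delta^{\mathscr{D}}\}$ is a sub-collection of the corresponding collection for $\mathscr{B}$; this is exactly why the clause was designed to quantify over supersets in the first place, so it goes through, but it is the step that deserves explicit justification rather than being waved through. Finally, the induction on $\Delta$ is well-founded because, as noted in the text preceding Definition \ref{PALValidity}, the announcements invoked inside effective updates act on formulae of strictly smaller height, so the simultaneous induction does not circle.
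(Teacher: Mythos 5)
Your proof is correct and follows essentially the same route as the paper's own (much terser) argument: the paper likewise reduces everything to Lemma \ref{ModalMonotonicity} and observes that the $[\chi]\psi$ case and the non-empty-$\Delta$ $K_a$ case are immediate from Definition \ref{PALValidity}, precisely because those clauses already quantify over all supersets of the evaluation base. Your explicit check that the superset-indexed collection of effective updates for $\mathscr{C}$ is a sub-collection of that for $\mathscr{B}$ is exactly the point the paper leaves implicit.
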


\begin{proof}
    For empty $\Delta$, it suffices to point out that the only new case compared to Lemma \ref{ModalMonotonicity} is the case that $\phi=[\psi]\chi$ which follows immediately from Definition \ref{PALValidity}.

    The cases for non-empty $\Delta$ follow in exactly the same way for all cases except $\phi=K_a\psi$. Note that this case follows immediately from Definition \ref{PALValidity} as well. 
    \end{proof}
     




We also show that every formula holds at inconsistent bases.

\medskip 

\begin{lemma}
    \label{lem:PALEFQ}
    For all update sets $\mathfrak{R}_A$, $\Delta$, and bases $\mathscr{B}$, if $\mathscr{B}$ is inconsistent, then $\Vdash_{\mathscr{B},\mathfrak{R}_A}^{\Delta}\phi$ for all $\phi$ s.t. $\phi$ only contains atoms of $P$.

    
\end{lemma}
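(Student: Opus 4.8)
The plan is to prove Lemma~\ref{lem:PALEFQ} by induction on the complexity of $\phi$, generalising the argument of Lemma~\ref{EFQ} to account for the sequence $\Delta$ of announcements and the new clause for announcement formulae. The complexity measure should be the one implicit in Definition~\ref{def:effective} and Definition~\ref{PALValidity}: announcements in $\Delta$ reduce the height of the formulae we need to reason about, so we may freely induct on the combined height of $\phi$ together with the formulae in $\Delta$. Throughout, $\mathscr{B}$ is inconsistent, i.e.\ $p \in \overline{\mathscr{B}}$ for every atom $p$.

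First I would dispatch the propositional cases. For $\phi = p$ an atom, $\Vdash_{\mathscr{B},\mathfrak{R}_A}^{\Delta} p$ holds immediately since $p \in \overline{\mathscr{B}}$ by inconsistency, and the atomic clause does not depend on $\Delta$ or $\mathfrak{R}_A$. For $\phi = \bot$, the clause requires $\Vdash_{\mathscr{B},\mathfrak{R}_A}^{\Delta} p$ for every atom $p$, which is exactly what we just established. For $\phi = \psi \to \chi$, I would use the clause $\phi \Vdash_{\mathscr{B},\mathfrak{R}_A}^{\Delta} \chi$, i.e.\ that for all $\mathscr{C} \supseteq \mathscr{B}$ with $\Vdash_{\mathscr{C},\mathfrak{R}_A}^{\Delta} \psi$ we have $\Vdash_{\mathscr{C},\mathfrak{R}_A}^{\Delta} \chi$; since any $\mathscr{C} \supseteq \mathscr{B}$ is also inconsistent, the induction hypothesis applied to $\chi$ (of smaller height, same $\Delta$) gives $\Vdash_{\mathscr{C},\mathfrak{R}_A}^{\Delta} \chi$ directly, so the implication holds vacuously in the antecedent and outright in the consequent.

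Next the modal case $\phi = K_a\psi$. If $\Delta$ is empty, the clause reduces to the $S5$ condition, and as in Lemma~\ref{EFQ} we invoke condition~(a) of Definition~\ref{modalRelation}: every base $\mathscr{B}'$ with $\mathfrak{R}_a\mathscr{B}\mathscr{B}'$ is inconsistent, so by the induction hypothesis $\Vdash_{\mathscr{B}',\mathfrak{R}_A}^{\Delta}\psi$. If $\Delta$ is non-empty, the clause quantifies over $\mathscr{C} \supseteq \mathscr{B}$, effective updates $\mathfrak{R}_A' \in \mathfrak{R}_A|\Delta^{\mathscr{C}}$, and $\mathscr{C}'$ with $\mathfrak{R}_a'\mathscr{C}\mathscr{C}'$; here I would appeal to Lemma~\ref{PALrelationIsModal}, which guarantees each $\mathfrak{R}_a'$ is an $S5$-modal relation, hence satisfies condition~(a), so $\mathscr{C}'$ is inconsistent (as $\mathscr{C} \supseteq \mathscr{B}$ is), and the induction hypothesis on $\psi$ yields $\Vdash_{\mathscr{C}',\mathfrak{R}_A}^{\Delta}\psi$. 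Finally the announcement case $\phi = [\psi]\chi$: the clause says for all $\mathscr{C} \supseteq \mathscr{B}$, if $\Vdash_{\mathscr{C},\mathfrak{R}_A}^{\Delta}\psi$ then $\Vdash_{\mathscr{C},\mathfrak{R}_A}^{\Delta,\psi}\chi$; since $\mathscr{C}$ is inconsistent and $\chi$ paired with $\Delta,\psi$ is lower in the chosen measure than $[\psi]\chi$ with $\Delta$, the induction hypothesis gives the consequent outright, independently of whether the antecedent holds.

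The main obstacle, and the only place requiring real care, is the non-empty-$\Delta$ modal case: one must be sure that the bases $\mathscr{C}'$ reached by an effective update $\mathfrak{R}_a'$ from an inconsistent $\mathscr{C}$ are themselves inconsistent. This is not immediate from the definition of effective update alone (which only pins down the relation on bases reachable by $\mathfrak{R}_A$ from $\mathscr{C}$), but it follows because $\mathfrak{R}_a' \in \mathfrak{R}_A|\Delta^{\mathscr{C}}$ is by Lemma~\ref{PALrelationIsModal} a genuine $S5$-modal relation and hence inherits condition~(a) of Definition~\ref{modalRelation}. Once that observation is in hand, every case collapses to a routine application of the induction hypothesis, exactly as in the proof of Lemma~\ref{EFQ}.
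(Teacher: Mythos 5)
Your proposal is correct and follows essentially the same route as the paper: induction on the complexity of $\phi$, with the atomic, $\bot$, and implication cases handled via the inconsistency of all supersets, the $K_a$ case split on empty versus non-empty $\Delta$ using condition (a) of Definition~\ref{modalRelation}, and the announcement case discharged by applying the induction hypothesis to $\chi$ under the extended sequence $\Delta,\psi$. The only small caveat is your citation of Lemma~\ref{PALrelationIsModal} in the non-empty-$\Delta$ modal case: that lemma only asserts that the particular constructed relation $R_a|\phi^{\mathscr{B}}$ is an $S5$-modal relation, not that every member of $\mathfrak{R}_A|\Delta^{\mathscr{C}}$ is; the paper instead gets inconsistency of the accessible bases by transferring $\mathfrak{R}'_a\mathscr{C}\mathscr{D}$ back to $\mathfrak{R}_a\mathscr{C}\mathscr{D}$ via the definition of effective update and then applying condition (a) to the original relation, which is the cleaner justification (though the intended reading that effective updates are themselves modal relations also suffices).
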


\begin{proof}
    We prove this by induction on the complexity of $\phi$. For $\phi = p$ and $\phi = \bot$ this follows immediately from the definition of inconsistent bases and the validity conditions in Definition \ref{PALValidity}, since $p\in P$. For $\phi= \psi\to\chi$, we have $\Vdash_{\mathscr{B},\mathfrak{R}_A}^{\Delta} \psi\to\chi$ iff $\phi\Vdash_{\mathscr{B},\mathfrak{R}_A}^{\Delta} \psi$. Since all $\mathscr{C}\supseteq\mathscr{B}$ s.t. $\Vdash_{\mathscr{C},\mathfrak{R}_A}^{\Delta} \psi$ will also be inconsistent by Lemma \ref{PALMonotonicity}, we know that $\Vdash_{\mathscr{B},\mathfrak{R}_A}^{\Delta} \chi$ by the induction hypothesis. 
    
    
    For the next case let $\phi = K_a \psi$. By Lemma \ref{PALMonotonicity}, all $\mathscr{C}\supseteq\mathscr{B}$ are also inconsistent and, for the case in which $\Delta$ is non-empty, so are all $\mathscr{D}$ for which there is a $\mathfrak{R}' \in \mathfrak{R}_A|\Delta^\mathscr{C}$ with $\mathfrak{R}_a\mathscr{C}\mathscr{D}$ and we have $\Vdash_{\mathscr{D},\mathfrak{R}_A}^{\Delta} \psi$ by the induction hypothesis. For the case in which $\Delta$ is empty, it suffices to point out that, due to condition $(a)$, all bases $\mathscr{X}$ s.t. $\mathfrak{R}_a \mathscr{B}\mathscr{X}$ are also inconsistent and $\Vdash_{\mathscr{D},\mathfrak{R}_A}^{\Delta} \psi$ by the induction hypothesis.

    Finally, let $\phi = [\psi]\chi$. For any $\mathscr{C}\supseteq\mathscr{B}$, if we have $\Vdash_{\mathscr{C},\mathfrak{R}_A}^{\Delta} \bot$, then we also have $\Vdash_{\mathscr{C},\mathfrak{R}_A}^{\Delta,\phi} \bot$ by the validity conditions of $\bot$. So, by the induction hypothesis, $\Vdash_{\mathscr{C},\mathfrak{R}_A}^{\Delta,\phi} \chi$ and $\Vdash_{\mathscr{C},\mathfrak{R}_A}^{\Delta} [\psi]\chi$. 
    %
\end{proof}

Given this, we can now look at maximally-consistent bases which will turn out to be helpful for proving the completeness of our semantics.

We show that whether a formula holds at a maximally-consistent base can be established without referencing any of its supersets. 

\medskip 

\begin{lemma}
\label{PALBehaviour}
For any update set $\mathfrak{R}_A$, $\Delta$, and maximally-consistent base $\mathscr{B}$, the following hold for formulae limited to atoms in $P$:

\[
\begin{array}{l@{\quad}c@{\quad}l}

    \nVdash_{\mathscr{B},\mathfrak{R}_A}^{\Delta} \bot & &\\
    
    \Vdash_{\mathscr{B},\mathfrak{R}_A}^{\Delta} \phi \to \psi & \mbox{iff} & \nVdash_{\mathscr{B},\mathfrak{R}_A}^{\Delta} \phi \mbox{ or } \Vdash_{\mathscr{B},\mathfrak{R}_A}^{\Delta} \psi\\
    
    \Vdash_{\mathscr{B},\mathfrak{R}_A}^{\Delta} K_a \phi & \mbox{iff} & \mbox{for all $ \mathscr{C}$ s.t. $\mathfrak{R}_a \mathscr{B}\mathscr{C}, \Vdash_{\mathscr{C},\mathfrak{R}_A}^{\Delta} \phi$}\\
    
    \Vdash_{\mathscr{B},\mathfrak{R}_A}^{\Delta} [\phi]\psi & \mbox{iff} & \mbox{  $\Vdash_{\mathscr{B},\mathfrak{R}_A}^{\Delta} \phi$ implies $\Vdash_{\mathscr{B},\mathfrak{R}_A}^{\Delta,\phi} \psi$}\\



    
    
    
    
\end{array}\]

\end{lemma}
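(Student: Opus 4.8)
The plan is to prove each of the four claims by induction on $\Delta$ (the sequence of announcements) and, within that, to reduce each case to the corresponding fact about maximally-consistent bases already available in the $S5$ setting, namely Lemmas~\ref{ModalBehaviour} and \ref{lem:excludedmiddle}. The underlying observation is that a maximally-consistent base $\mathscr{B}$ has no \emph{consistent} proper superset, so every clause in Definition~\ref{PALValidity} that quantifies over $\mathscr{C}\supseteq\mathscr{B}$ collapses: either $\mathscr{C}=\mathscr{B}$ or $\mathscr{C}$ is inconsistent, and in the latter case Lemma~\ref{lem:PALEFQ} tells us every formula is supported there. So the quantification over supersets contributes nothing beyond the point $\mathscr{C}=\mathscr{B}$ itself.

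First I would dispatch the $\bot$ clause: $\nVdash_{\mathscr{B},\mathfrak{R}_A}^{\Delta}\bot$ because by Definition~\ref{ClassicalRule} a consistent base does not prove every atom, and the $\bot$-clause of Definition~\ref{PALValidity} does not depend on $\mathfrak{R}_A$ or $\Delta$ at all — this is exactly the argument of Lemma~\ref{ModalBehaviour}. For the implication clause, the right-to-left direction is immediate from monotonicity (Lemma~\ref{PALMonotonicity}) applied to the only relevant superset, $\mathscr{B}$ itself, together with the EFQ fact for inconsistent supersets (Lemma~\ref{lem:PALEFQ}); for left-to-right, assuming $\Vdash_{\mathscr{B},\mathfrak{R}_A}^{\Delta}\phi$ we get $\Vdash_{\mathscr{B},\mathfrak{R}_A}^{\Delta}\psi$ directly by instantiating the $\to$-clause at $\mathscr{C}=\mathscr{B}$. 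The key auxiliary needed here is the analogue of Lemma~\ref{lem:excludedmiddle} for the PAL support relation — that at a maximally-consistent base either $\Vdash^{\Delta}\phi$ or $\Vdash^{\Delta}\phi\to\bot$ — which I would either prove first as a separate lemma (the proof of Lemma~\ref{lem:excludedmiddle} goes through verbatim, using that a proper superset of $\mathscr{B}$ is inconsistent hence supports $\bot$) or fold into the induction.

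For the $K_a\phi$ clause: when $\Delta$ is empty there is nothing to prove, since the PAL clause literally is the $S5$ clause. When $\Delta$ is non-empty, the PAL clause quantifies over all $\mathscr{C}\supseteq\mathscr{B}$, all $\mathfrak{R}'_A\in\mathfrak{R}_A|\Delta^{\mathscr{C}}$, and all $\mathscr{C}'$ with $\mathfrak{R}'_a\mathscr{C}\mathscr{C}'$. The reduction has two halves. Because $\mathscr{B}$ is maximally-consistent, the only consistent choice of $\mathscr{C}$ is $\mathscr{B}$ itself, and any inconsistent $\mathscr{C}$ contributes only inconsistent reachable bases (condition (a) of Definition~\ref{modalRelation}, preserved by the update construction via Lemma~\ref{PALrelationIsModal}), on which $\phi$ holds by Lemma~\ref{lem:PALEFQ}; so only $\mathscr{C}=\mathscr{B}$ matters. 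Then I must show that ranging over $\mathscr{C}'$ with $\mathfrak{R}'_a\mathscr{B}\mathscr{C}'$ for $\mathfrak{R}'_A$ an effective update is equivalent, as far as support of $\phi$ is concerned, to ranging over $\mathscr{C}'$ with $\mathfrak{R}_a\mathscr{B}\mathscr{C}'$: the $\subseteq$ direction is immediate since effective updates are subrelations of $\mathfrak{R}_a$; for $\supseteq$, observe that if $\mathfrak{R}_a\mathscr{B}\mathscr{C}'$ and the announced formula (the last entry of $\Delta$) holds at both $\mathscr{B}$ and $\mathscr{C}'$ then $\mathfrak{R}'_a\mathscr{B}\mathscr{C}'$ holds in the canonical effective update, while if it fails at $\mathscr{C}'$ then $\phi$ is vacuously not required there — and here is where I would need to be careful to invoke the induction hypothesis correctly, since $\phi$ is being evaluated under $\Delta$, which is one announcement shorter than $\Delta$ appended with the new formula.

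The $[\phi]\psi$ clause is the cleanest: the left-to-right direction just instantiates the clause at $\mathscr{C}=\mathscr{B}$; the right-to-left direction uses that for any \emph{consistent} $\mathscr{C}\supseteq\mathscr{B}$ we have $\mathscr{C}=\mathscr{B}$, and for inconsistent $\mathscr{C}$ the conclusion $\Vdash_{\mathscr{C},\mathfrak{R}_A}^{\Delta,\phi}\psi$ holds by Lemma~\ref{lem:PALEFQ}, so the hypothesis $\Vdash_{\mathscr{B},\mathfrak{R}_A}^{\Delta}\phi\Rightarrow\Vdash_{\mathscr{B},\mathfrak{R}_A}^{\Delta,\phi}\psi$ is exactly what the clause demands. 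The main obstacle I anticipate is the $\supseteq$ direction of the $K_a$ case under non-empty $\Delta$ — precisely controlling the interaction between the superset quantifier, the choice of effective update, and the fact that effective updates at $\mathscr{B}$ versus at a superset of $\mathscr{B}$ can differ (the very phenomenon Figure~\ref{TargettedUpdated} illustrates). Since $\mathscr{B}$ is maximally-consistent the superset issue evaporates, but one still has to argue that \emph{every} effective update at $\mathscr{B}$, not just the canonical one of Definition~\ref{def:UpdateConstruction}, connects $\mathscr{B}$ to exactly the $\mathscr{C}'$ on which the announced formula holds — this follows directly from Definition~\ref{def:effective} since $\mathscr{B}$ is reachable from itself, so the "iff" in that definition pins down $\mathfrak{R}'_a\mathscr{B}\mathscr{C}'$ completely.
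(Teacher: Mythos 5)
Your core observation is exactly the paper's proof: since $\mathscr{B}$ is maximally-consistent, every proper superset is inconsistent, so by Lemma~\ref{lem:PALEFQ} every formula is supported there under any $\mathfrak{R}_A$ or any update $\mathfrak{R}_A|\Delta^{\mathscr{C}}$, and hence all the superset quantifiers in Definition~\ref{PALValidity} collapse to $\mathscr{B}$ itself; the $\bot$ clause is immediate from consistency. The paper's proof consists of precisely these two remarks, so everything else in your plan is additional structure. Two of your additions are unnecessary: no induction on $\Delta$ is needed, and the implication clause does not require an excluded-middle auxiliary --- indeed in the paper Lemma~\ref{lem:MaxConOr} is derived \emph{from} the present lemma rather than used for it (your plan to prove it separately first avoids circularity, but Lemma~\ref{PALMonotonicity} plus the collapse already settles both directions of the $\to$ clause). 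Your detailed treatment of the $K_a$ clause for non-empty $\Delta$ goes beyond what the paper offers (the paper subsumes it under the same collapse remark), and it is the one place your reasoning is imprecise: Definition~\ref{def:effective} constrains an effective update to agree with the restriction of $\mathfrak{R}_a$ only on bases reachable from the evaluation base, so ``effective updates are subrelations of $\mathfrak{R}_a$'' is not literally immediate, and your phrase ``vacuously not required there'' tacitly reads the right-hand side of the $K_a$ clause as ranging only over successors at which the announced formula holds, which is not what the stated clause says. Since the paper's own proof does not supply this step either, your attempt is, relative to the paper, the more explicit one about where the real work in the $K_a$ case lies, and its core is the same argument.
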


\begin{proof}


    The $\bot$-cases follow from directly from consistency and our definition of effective updates.

    For the other cases it suffices to point out that any $\mathscr{B}'\supset\mathscr{B}$ is inconsistent and so by Lemma \ref{lem:PALEFQ} any formula limited to $P$ is supported at them given any $\mathfrak{R}_A$ or $\mathfrak{R}_A|\Delta^{\mathscr{B}'}$.
\end{proof} 

From this, it follows that, for formulae limited to $P$, the {\it law of excluded middle} holds on maximally-consistent bases: for every formula $\phi$, either $\phi$ or $\phi\to\bot$.

\medskip 

\begin{lemma}
    \label{lem:MaxConOr}
For every formula $\phi$ limited to $P$, maximally-consistent base $\mathscr{B}$, $S5$-modal relations $\mathfrak{R}_A$, and $\Delta$, either $\Vdash_{\mathscr{B},\mathfrak{R}_A}^{\Delta} \phi$ or $\Vdash_{\mathscr{B},\mathfrak{R}_A}^{\Delta} \phi\to\bot$. 

\end{lemma}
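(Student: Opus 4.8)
The plan is to reduce Lemma~\ref{lem:MaxConOr} to Lemma~\ref{PALBehaviour}, exactly as Lemma~\ref{lem:excludedmiddle} was obtained from Lemma~\ref{ModalBehaviour} in the pure $S5$ case. Observe that $\phi\to\bot$ is an implication, so its support at a maximally-consistent base $\mathscr{B}$ is governed by the second clause of Lemma~\ref{PALBehaviour}: $\Vdash_{\mathscr{B},\mathfrak{R}_A}^{\Delta}\phi\to\bot$ iff $\nVdash_{\mathscr{B},\mathfrak{R}_A}^{\Delta}\phi$ or $\Vdash_{\mathscr{B},\mathfrak{R}_A}^{\Delta}\bot$. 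By the first clause of Lemma~\ref{PALBehaviour} (or directly by consistency of $\mathscr{B}$), we have $\nVdash_{\mathscr{B},\mathfrak{R}_A}^{\Delta}\bot$, so the right disjunct is false and thus $\Vdash_{\mathscr{B},\mathfrak{R}_A}^{\Delta}\phi\to\bot$ iff $\nVdash_{\mathscr{B},\mathfrak{R}_A}^{\Delta}\phi$.

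Given this equivalence, the dichotomy is immediate: for any fixed $\phi$, $\mathscr{B}$, $\mathfrak{R}_A$, and $\Delta$, either $\Vdash_{\mathscr{B},\mathfrak{R}_A}^{\Delta}\phi$ holds or it does not. In the first case the left disjunct of the conclusion holds. In the second case, $\nVdash_{\mathscr{B},\mathfrak{R}_A}^{\Delta}\phi$, so by the equivalence just established $\Vdash_{\mathscr{B},\mathfrak{R}_A}^{\Delta}\phi\to\bot$, giving the right disjunct. Since support at a base is a definite (bivalent, metatheoretic) property, these two cases are exhaustive, and the lemma follows.

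There is essentially no obstacle here: the real work has already been discharged in Lemma~\ref{PALBehaviour}, whose proof in turn rests on Lemma~\ref{lem:PALEFQ} (every formula is supported at inconsistent bases) together with the fact that every proper superset of a maximally-consistent base is inconsistent. The only point deserving a word of care is that the announcement sequence $\Delta$ plays no active role: the implication clause of the support relation in Definition~\ref{PALValidity} quantifies over supersets $\mathscr{C}\supseteq\mathscr{B}$ carrying the \emph{same} $\Delta$, and since all proper supersets of $\mathscr{B}$ are inconsistent, Lemma~\ref{lem:PALEFQ} applies to them regardless of $\Delta$. Hence the argument is uniform in $\Delta$ and no separate treatment of empty versus non-empty $\Delta$ is needed. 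I would therefore write the proof in two short sentences, citing Lemma~\ref{PALBehaviour} for the characterization of $\to$ and $\bot$ at maximally-consistent bases and then splitting on whether $\Vdash_{\mathscr{B},\mathfrak{R}_A}^{\Delta}\phi$.
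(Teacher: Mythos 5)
Your proof is correct and follows essentially the same route as the paper: both reduce the claim to the characterization of $\to$ (and $\bot$) at maximally-consistent bases given in Lemma~\ref{PALBehaviour}, and then observe that if $\nVdash_{\mathscr{B},\mathfrak{R}_A}^{\Delta}\phi$ the implication $\phi\to\bot$ is supported vacuously. Your extra remark that $\Delta$ plays no role is a harmless elaboration of what the paper leaves implicit.
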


\begin{proof}

    We assume $\nVdash_{\mathscr{B},\mathfrak{R}_A}^{\Delta} \phi$ and show $\Vdash_{\mathscr{B},\mathfrak{R}_A}^{\Delta} \phi\to\bot$. 
    
    By Lemma \ref{PALBehaviour}, $\Vdash_{\mathscr{B},\mathfrak{R}_A}^{\Delta} \phi\to\bot$ iff if $\Vdash_{\mathscr{B},\mathfrak{R}_A}^{\Delta} \phi$, then $\Vdash_{\mathscr{B},\mathfrak{R}_A}^{\Delta} \bot$. So, since we assumed $\nVdash_{\mathscr{B},\mathfrak{R}_A}^{\Delta} \phi$, the antecedent is false and so $\Vdash_{\mathscr{B},\mathfrak{R}_A}^{\Delta} \phi\to\bot$.
    %
\end{proof}

There is one more important property of maximally-consistent bases that we require for the soundness proof. If a formula does not hold at a base, then that base can be extended to a maximally-consistent base on which the formula does not hold.


To show this we need two more results about the compositionality of announcements.

\medskip 

\begin{lemma}
\label{CompR}
   For any formulae $\phi$ and $\psi$, base $\mathscr{B}$, and update set $\mathfrak{R}_A$, 
   \begin{enumerate}
       \item[—]for any $\mathfrak{R}^{\phi\wedge[\phi]\psi}_A \in \mathfrak{R}_A|(\phi\wedge[\phi]\psi)^\mathscr{B}$ there is a $\mathfrak{R}^{\phi;\psi}_A \in \mathfrak{R}_A|(\phi;\psi)^\mathscr{B}$ s.t. for any formula $\chi$ limited to atoms in $P$, for any update $\mathscr{B}'$ of $\mathscr{B}$ for $\mathfrak{R}^{\phi\wedge[\phi]\psi}_A$ $\Vdash_{\mathscr{\mathscr{B}', \mathfrak{R}^{\phi\wedge[\phi]\psi}}_A}^{\phi\wedge[\phi]\psi} \chi$ iff for any update $\mathscr{B}^*$ of $\mathscr{B}$ for $\mathfrak{R}^{\phi;\psi}_A$ $\Vdash_{\mathscr{\mathscr{B}^*, \mathfrak{R}^{\phi;\psi}}_A}^{\phi;\psi} \chi$     
       
       \item[—]for any $\mathfrak{R}^{\phi;\psi}_A \in \mathfrak{R}_A|(\phi;\psi)^\mathscr{B}$ there is a $\mathfrak{R}^{\phi\wedge[\phi]\psi}_A \in \mathfrak{R}_A|(\phi\wedge[\phi]\psi)^\mathscr{B}$ s.t. for any formula $\chi$ limited to atoms in $P$, for any update $\mathscr{B}^*$ of $\mathscr{B}$ for $\mathfrak{R}^{\phi;\psi}_A$ $\Vdash_{\mathscr{\mathscr{B}^*, \mathfrak{R}^{\phi;\psi}}_A}^{\phi;\psi} \chi$ iff for any update $\mathscr{B}'$ of $\mathscr{B}$ for $\mathfrak{R}^{\phi\wedge[\phi]\psi}_A$ $\Vdash_{\mathscr{\mathscr{B}', \mathfrak{R}^{\phi\wedge[\phi]\psi}}_A}^{\phi\wedge[\phi]\psi} \chi$.
   \end{enumerate}

  
\end{lemma}

\begin{proof}
 
To prove the second part of this lemma we simply show that any $\mathfrak{R}^{\phi;\psi}_A$ is going to be an effective update by $\phi\wedge[\phi]\psi$. For any $\mathscr{C}^*$ to be an update of $\mathscr{C}$ for $\mathfrak{R}^{\phi;\psi}_A$, note that $\Vdash_{\mathscr{C},\mathfrak{R}_A} \phi$ and for the $\mathscr{C}^1$ and $\mathfrak{R}_A^\phi$ s.t. $\mathscr{C}^1$ is an update of $\mathscr{C}$ for $\mathfrak{R}_A^{\phi}$ $\Vdash^\phi_{\mathscr{C}^1, \mathfrak{R}_A^\phi} \psi$. By Lemma \ref{lem:EquivalenceOfEffectiveUpdates}, it follows that $\Vdash_{\mathscr{C}, \mathfrak{R}_A} [\phi]\psi$ and so $\mathfrak{R}^{\phi;\psi}_A \in \mathfrak{R}_A|(\phi\wedge[\phi]\psi)^\mathscr{B}$.

The first part follows directly from this and Lemma~\ref{lem:EquivalenceOfEffectiveUpdates}.
\end{proof}

\medskip 

\begin{cor}
    \label{DELTAformula}

    For every string of announcements $\Delta = \phi_1; \dots ; \phi_n$, there is a single formula $\phi(\Delta)$ s.t., for every $\mathscr{B}, \mathscr{C}$ and $\mathscr{D}$ and $\mathfrak{R}_A$ s.t. $\mathfrak{R}_A \mathscr{C}\mathscr{D}$, there is a $\mathfrak{R}^1_A$ that is an effective update of $\mathfrak{R}_A$ by $\phi(\Delta)$ at $\mathscr{B}$ with updates $\mathscr{C^1}$ and $\mathscr{D}^1$ s.t. $\mathfrak{R}^1_a \mathscr{C}^1\mathscr{D}^1$ iff there is a $\mathfrak{R}^2_A$ that is an effective update of $\mathfrak{R}_A$ by $\Delta$ at $\mathscr{B}$ with updates $\mathscr{C}^2$ and $\mathscr{D}^2$ s.t. $\mathfrak{R}^2_a \mathscr{C}^2\mathscr{D}^2$. 
    Specifically, for non-empty $\Delta$, $\phi(\Delta) = \phi_1 \wedge [\phi_1](\phi_2 \wedge [\phi_2](\phi_3\wedge \dots \wedge [\phi_{n-1}] \phi_n))$. For empty $\Delta$, $\phi(\Delta)$ can be any tautology. 
\end{cor} \medskip 

Now we can show the lemma we initially wanted.

\medskip 

\begin{lemma}
\label{lem: PALMaxCon}

For any string of announcements $\Delta$, formula $\phi$, base $\mathscr{B}$ and update set $\mathfrak{R}_A$, if $\Vdash_{\mathscr{B},\mathfrak{R}_A} \phi(\Delta)$ and $\nVdash_{\mathscr{B},\mathfrak{R}_A}^{\Delta} \phi$, then there exists a maximally-consistent base $\mathscr{B}^*\supseteq\mathscr{B}$ with $\nVdash_{\mathscr{B}^*,\mathfrak{R}_A}^{\Delta} \phi$.




\end{lemma}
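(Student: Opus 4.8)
The plan is to proceed by induction on the complexity (height) of $\phi$, mirroring the structure of Lemma~\ref{lem: ModalMaxCon} from the $S5$ case but carrying the announcement sequence $\Delta$ along as a parameter. The hypothesis $\Vdash_{\mathscr{B},\mathfrak{R}_A} \phi(\Delta)$ guarantees (via Corollary~\ref{DELTAformula} and Lemma~\ref{lem:effective}) that effective updates $\mathfrak{R}_A|\Delta^{\mathscr{C}}$ actually exist at all the relevant bases $\mathscr{C}\supseteq\mathscr{B}$, since $\phi(\Delta)$ being supported at $\mathscr{B}$ means it is supported at all supersets by Lemma~\ref{PALMonotonicity}; this is what keeps the $K_a$ case well-defined. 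For the propositional base cases ($\phi=p$, $\phi=\bot$, $\phi=\psi\to\chi$) the argument is exactly as in the classical/$S5$ setting (following \cite{Makinson2014} and Lemma~\ref{lem: ModalMaxCon}), since support for these connectives does not consult $\mathfrak{R}_A$ or $\Delta$ at all — one extends $\mathscr{B}$ to a maximally-consistent $\mathscr{B}^*$ using the standard Lindenbaum-style construction and checks, using Lemma~\ref{PALBehaviour} and Lemma~\ref{lem:MaxConOr}, that non-support is preserved.

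The genuinely new cases are $\phi = K_a\psi$ and $\phi=[\psi]\chi$. For $\phi=K_a\psi$ with non-empty $\Delta$: from $\nVdash_{\mathscr{B},\mathfrak{R}_A}^{\Delta}K_a\psi$ we get, by the $K_a$-clause of Definition~\ref{PALValidity}, some $\mathscr{C}\supseteq\mathscr{B}$, some $\mathfrak{R}'_A\in\mathfrak{R}_A|\Delta^{\mathscr{C}}$, and some $\mathscr{C}'$ with $\mathfrak{R}'_a\mathscr{C}\mathscr{C}'$ and $\nVdash_{\mathscr{C}',\mathfrak{R}_A}^{\Delta}\psi$. Here I would follow the $S5$ proof's strategy: apply the induction hypothesis to $\psi$ at $\mathscr{C}'$ to obtain a maximally-consistent $\mathscr{C}'^{*}\supseteq\mathscr{C}'$ with $\nVdash_{\mathscr{C}'^{*},\mathfrak{R}_A}^{\Delta}\psi$, then use symmetry and condition (c) of Definition~\ref{modalRelation} (applied to the modal relation $\mathfrak{R}'_a$, which is a genuine $S5$-modal relation by Lemma~\ref{PALrelationIsModal}) to push $\mathscr{C}$ up to a maximally-consistent $\mathscr{C}^{*}\supseteq\mathscr{C}$ still related by $\mathfrak{R}'_a$ to $\mathscr{C}'^{*}$. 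The subtlety specific to PAL is that the $K_a$-clause for non-empty $\Delta$ already quantifies over \emph{all} $\mathscr{C}\supseteq\mathscr{B}$ and \emph{all} effective updates at those supersets, so once we have a single witnessing triple at the maximally-consistent level we must check the clause still fails there — but that is exactly Lemma~\ref{PALBehaviour}, which collapses the $K_a$-clause at a maximally-consistent base back to the simple "for all $\mathscr{C}$ with $\mathfrak{R}_a\mathscr{B}\mathscr{C}$" form, and one relates $\mathscr{C}^{*}$ back to the original relation by noting effective updates agree with $\mathfrak{R}_a$ on bases reachable from the relevant base (Definition~\ref{def:effective}). For the empty-$\Delta$ subcase this is literally Lemma~\ref{lem: ModalMaxCon}.

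For $\phi=[\psi]\chi$: from $\nVdash_{\mathscr{B},\mathfrak{R}_A}^{\Delta}[\psi]\chi$ the $[\cdot]$-clause gives some $\mathscr{C}\supseteq\mathscr{B}$ with $\Vdash_{\mathscr{C},\mathfrak{R}_A}^{\Delta}\psi$ but $\nVdash_{\mathscr{C},\mathfrak{R}_A}^{\Delta,\psi}\chi$. I would first observe that $\phi(\Delta,\psi) = \phi(\Delta)\wedge[\text{stuff}]\psi$ in the sense of Corollary~\ref{DELTAformula}, and that $\Vdash_{\mathscr{C},\mathfrak{R}_A}\phi(\Delta,\psi)$ holds because $\phi(\Delta)$ is supported at $\mathscr{C}$ (Lemma~\ref{PALMonotonicity}) and $\psi$ is supported after $\Delta$ at $\mathscr{C}$ by assumption — so the induction hypothesis applies to the shorter formula $\chi$ with the longer announcement string $\Delta,\psi$ at base $\mathscr{C}$, yielding a maximally-consistent $\mathscr{B}^{*}\supseteq\mathscr{C}\supseteq\mathscr{B}$ with $\nVdash_{\mathscr{B}^{*},\mathfrak{R}_A}^{\Delta,\psi}\chi$. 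Then, using Lemma~\ref{PALBehaviour}'s characterization of $[\psi]\chi$ at a maximally-consistent base (it fails iff $\psi$ is supported but $\chi$ is not supported after $\psi$, both evaluated \emph{at that same base}), I need $\Vdash_{\mathscr{B}^{*},\mathfrak{R}_A}^{\Delta}\psi$; this follows from $\Vdash_{\mathscr{C},\mathfrak{R}_A}^{\Delta}\psi$ by Lemma~\ref{PALMonotonicity}. Hence $\nVdash_{\mathscr{B}^{*},\mathfrak{R}_A}^{\Delta}[\psi]\chi$, as required.

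The main obstacle I anticipate is the $K_a$ case with non-empty $\Delta$: unlike in $S5$, the support clause there ranges over supersets $\mathscr{C}\supseteq\mathscr{B}$ \emph{and} over the whole family $\mathfrak{R}_A|\Delta^{\mathscr{C}}$ of effective updates, which are genuinely different relations at different bases (the whole point of Section~\ref{sec:B-eS-PAL}). So the delicate bookkeeping is to make sure that when we move from the witnessing base $\mathscr{C}$ up to a maximally-consistent extension, the \emph{particular} effective update $\mathfrak{R}'_A$ we used still witnesses the failure — which is where Lemma~\ref{PALrelationIsModal} (each $R_a|\phi^{\mathscr{C}}$ is a bona fide $S5$-modal relation, so conditions (c),(d) and symmetry are available) and Lemma~\ref{PALBehaviour} (maximally-consistent bases ignore supersets, collapsing the quantifier tangle) do the heavy lifting. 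Everything else is a routine adaptation of the $S5$ arguments with $\Delta$ threaded through.
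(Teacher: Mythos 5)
Your overall skeleton matches the paper: induction on the complexity of $\phi$, propositional cases handled as in the classical/$S5$ setting, and the $[\psi]\chi$ case by applying the induction hypothesis to $\chi$ with the extended string $\Delta,\psi$; indeed your announcement case is more explicit than the paper's about why the induction hypothesis is applicable there (support of $\phi(\Delta,\psi)$ at the witnessing base). The problem is the $K_a$ case with non-empty $\Delta$, exactly the step you yourself flag as delicate. You lift the witnessing edge inside the updated relation $\mathfrak{R}'_a$, invoking symmetry and condition (c) for $\mathfrak{R}'_a$ (citing Lemma \ref{PALrelationIsModal}, which concerns the constructed relation $R_a|\phi^{\mathscr{B}}$, not an arbitrary member of $\mathfrak{R}_A|\Delta^{\mathscr{C}}$), and then want to conclude failure at the maximally-consistent $\mathscr{C}^{*}$ via Lemma \ref{PALBehaviour} together with the remark that effective updates agree with $\mathfrak{R}_a$ on reachable bases. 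But $\mathfrak{R}'_A$ is an effective update \emph{at $\mathscr{C}$}, not at $\mathscr{C}^{*}$, so it cannot itself witness the failure of the $K_a$-clause at $\mathscr{C}^{*}$; and the agreement with $\mathfrak{R}_a$ guaranteed by Definition \ref{def:effective} is restricted to pairs of bases reachable from $\mathscr{C}$ by $\mathfrak{R}_A$ --- the lifted pair $(\mathscr{C}^{*},\mathscr{C}'^{*})$ consists of proper supersets, which are in general not reachable, so neither $\mathfrak{R}_a\mathscr{C}^{*}\mathscr{C}'^{*}$ nor the presence of that edge in any element of $\mathfrak{R}_A|\Delta^{\mathscr{C}^{*}}$ follows from what you have.

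The paper closes exactly this hole in a different way: it transfers the witnessing edge back to the original relation on the reachable pair (from $\mathfrak{R}'_a\mathscr{X}\mathscr{C}$ it concludes $\mathfrak{R}_a\mathscr{X}\mathscr{C}$), lifts using symmetry and condition (c) of $\mathfrak{R}_a$ (applied twice) to obtain $\mathfrak{R}_a\mathscr{X}^{*}\mathscr{C}^{*}$ with both endpoints maximally consistent, and then uses the hypothesis $\Vdash_{\mathscr{B},\mathfrak{R}_A}\phi(\Delta)$ --- via monotonicity for $\mathscr{X}^{*}$, and via effectiveness (the related base $\mathscr{C}$ supports $\phi(\Delta)$) plus monotonicity for $\mathscr{C}^{*}$ --- to conclude that the edge $\mathscr{X}^{*}\mathscr{C}^{*}$ is retained in an effective update $\mathfrak{R}''_A\in\mathfrak{R}_A|\Delta^{\mathscr{X}^{*}}$, which is precisely what the validity clause at $\mathscr{X}^{*}$ requires. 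In your proposal the hypothesis $\Vdash_{\mathscr{B},\mathfrak{R}_A}\phi(\Delta)$ is used only to assert that effective updates exist; its essential role --- guaranteeing that the lifted edge survives into an effective update at the new maximally-consistent base --- is missing, and without it the $K_a$ step does not go through.
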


\begin{proof}



    We proof this by induction on the complexity of $\phi$. The cases for $\phi= p$ and $\phi=\bot$ are straightforward, as the choice of $\mathfrak{R}$ is irrelevant to them.

    Note that, for empty $\Delta$ the cases for $\phi = K_a\psi$ and $\phi= \psi\to\chi$ follow immediately from Lemma \ref{lem: ModalMaxCon}. We proceed to show these cases for non-empty $\Delta$.

    For $\phi= \psi\to\chi$, we start with $\nVdash_{\mathscr{B},\mathfrak{R}_A}^{\Delta} \psi\to\chi$. So, there is a $\mathscr{C}\supseteq \mathscr{B}$ s.t. $\Vdash_{\mathscr{C},\mathfrak{R}_A}^{\Delta} \psi$ and $\nVdash_{\mathscr{C},\mathfrak{R}_A}^{\Delta} \chi$. By induction hypothesis, there is a maximally-consistent $\mathscr{C}^*\supseteq\mathscr{C}$ with $\nVdash_{\mathscr{C}^*,\mathfrak{R}_A}^{\Delta} \chi$ and, since $\mathscr{C}^*\supseteq\mathscr{C}$, $\Vdash_{\mathscr{C}^*,\mathfrak{R}_A}^{\Delta} \psi$. So, $\nVdash_{\mathscr{C}^*,\mathfrak{R}_A}^{\Delta} \psi\to \chi$.

    The interesting case is the one in which $\phi= K_a \psi$. Given $\nVdash_{\mathscr{B},\mathfrak{R}_A}^{\Delta} K_a\psi$, there are $\mathscr{X}\supseteq\mathscr{B}$, $\mathscr{C}$ and $\mathfrak{R}'_A \in \mathfrak{R}_A|\Delta^\mathscr{X}$ s.t. $\nVdash_{\mathscr{C},\mathfrak{R}_A}^{\Delta} \psi$, by induction hypothesis, there is a maximally-consistent $\mathscr{C}^*\supseteq\mathscr{C}$ with $\nVdash^\Delta_{\mathscr{C}^*,\mathfrak{R}_A} \psi$. Let $\mathscr{X}_1$ and $\mathscr{C}_1$ be the corresponding updates of $\mathscr{X}$ and $\mathscr{C}$. From $\mathfrak{R}'_a \mathscr{X}_1\mathscr{C}_1$, we know that $\mathfrak{R}_a \mathscr{X}\mathscr{C}$ and, by symmetry, $\mathfrak{R}_a\mathscr{C}\mathscr{X}$. Since $\mathfrak{R}_a$ is an update relation, we know that, by condition (c) of Definition \ref{modalRelation}, there is a $\mathscr{X}'\supseteq\mathscr{X}$ s.t. $\mathfrak{R}_a \mathscr{C}^* \mathscr{X}'$ and, again by symmetry, that $\mathfrak{R}_a \mathscr{X}' \mathscr{C}^*$. Again by (c), for every maximally-consistent $\mathscr{X}^* \supseteq\mathscr{X}'$, $\mathfrak{R}_a\mathscr{X}^*\mathscr{C}^*$, because $\mathscr{C}^*$ does not have any consistent superset bases other than itself. Now we only need to show that the relation still holds between the updates of them for some relation $\mathfrak{R}''_A \in \mathfrak{R}_A|\Delta^{\mathscr{X}^*}$.
    
    Since, by assumption, $\Vdash_{\mathscr{X},\mathfrak{R}_A} \phi(\Delta)$ and $\mathscr{X}^*\supseteq\mathscr{X}$, we also have $\Vdash_{\mathscr{X}^*,\mathfrak{R}_A} \phi(\Delta)$. Similarly, because of $\mathfrak{R}'_a \mathscr{X}^+\mathscr{C}^+$, we know that $\Vdash_{\mathscr{C},\mathfrak{R}_A} \phi(\Delta)$ and so $\Vdash_{\mathscr{C}^*,\mathfrak{R}_A} \phi(\Delta)$. So, we have $\mathfrak{R}''_a\mathscr{X}_2^*\mathscr{C}_2^*$, where $\mathscr{X}^*_2$ and $\mathscr{C}_2^*$ are the corresponding updates of $\mathscr{X}$ and $\mathscr{C}$, and, so, $\nVdash_{\mathscr{X}^*,\mathfrak{R}_A}^{\Delta} K_a \psi$.

    Finally, consider the case that $\phi=[\psi]\chi$. Take $\nVdash_{\mathscr{B},\mathfrak{R}_A}^{\Delta} [\psi]\chi$, so there is a $\mathscr{C}\supseteq\mathscr{B}$ s.t. $\Vdash_{\mathscr{C},\mathfrak{R}_A}^{\Delta} \psi$ but $\nVdash_{\mathscr{C},\mathfrak{R}_A}^{\Delta,\phi} \chi$. So, by induction hypothesis, there is a maximally-consistent such $\mathscr{C}^*\supseteq\mathscr{C}$ and we are done. 
\end{proof}

 We can now finally show that the rules and axioms of modal logic hold for our base-extension semantics of PAL. We break this up to make it easier on the reader and start with the classical axioms and modus ponens.


\medskip 

    \begin{lemma}
    \label{lem:ClassicalAxioms}
        The rules and axioms of $S5$ modal logic are valid on the base-extension semantics for PAL.

        \begin{itemize}
        \item[—](1) $\phi\Vdash \psi\to\phi$ 
        \item[—](2) $\phi\to(\psi\to\chi)\Vdash (\phi\to\psi)\to(\phi\to\chi)$
        \item[—](3) $(\phi\to\bot)\to(\psi\to\bot)\Vdash \psi\to\phi$
        \item[—](MP) If $\Vdash_{\mathscr{B},\mathfrak{R}_A} \phi\to\psi$ and $\Vdash_{\mathscr{B},\mathfrak{R}_A} \phi$, then $\Vdash_{\mathscr{B},\mathfrak{R}_A} \psi$ 

        \end{itemize}
    \end{lemma}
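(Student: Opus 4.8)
The plan is to prove each of the four items by reducing it, as far as possible, to the already-established facts about the $S5$ semantics, exploiting that the propositional connectives $\to$ and $\bot$ have validity conditions in Definition~\ref{PALValidity} that are \emph{literally identical} to those in Definition~\ref{EXTValidity} once one fixes the superscript $\Delta$. In other words, for a fixed $\Delta$ and fixed $\mathfrak{R}_A$, the relation $\Vdash^{\Delta}_{\cdot,\mathfrak{R}_A}$ restricted to purely propositional reasoning behaves exactly like the $S5$ support relation, so the classical soundness proof of \cite{Sandqvist2009} goes through verbatim. I would state this as the opening observation and then handle the four items.

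For items (1), (2), and (3): first I would unfold the definition of $\Gamma \Vdash \phi$ (general validity) to: for every base $\mathscr{B}$, every set of $S5$-modal relations $\mathfrak{R}_A$, and here the empty $\Delta$, we have $\Gamma \Vdash_{\mathscr{B},\mathfrak{R}_A} \phi$, which in turn unfolds via the $\Gamma$-clause to: for all $\mathscr{C}\supseteq\mathscr{B}$, if every member of $\Gamma$ is supported at $\mathscr{C}$ then so is $\phi$. Since these are single-premise entailments with purely propositional formulas, I would then run the standard argument: assume the premise holds at some $\mathscr{C}$, take an arbitrary $\mathscr{D}\supseteq\mathscr{C}$, assume the antecedent(s) of the conclusion's implication hold at $\mathscr{D}$, and derive the consequent using Lemma~\ref{PALMonotonicity} (monotonicity) to carry the premise up to $\mathscr{D}$ and, where needed, the behaviour of $\to$ at maximally-consistent bases is \emph{not} needed here since these are the propositional axioms — only monotonicity and the definition of $\to$-support. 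For (3) specifically, the contraposition axiom, I would additionally invoke Lemma~\ref{lem:PALEFQ} (ex falso) exactly as in the classical case: from $(\phi\to\bot)\to(\psi\to\bot)$ holding and $\psi$ holding at some extension, one shows $\phi\to\bot$ cannot hold, hence (using that at the relevant extensions either $\phi$ or $\phi\to\bot$ is forced — or more carefully, using that if $\phi$ failed at a consistent extension we could extend to witness $\phi\to\bot$ via Lemma~\ref{lem:PALEFQ} on the inconsistent super-base) $\phi$ must hold. I'd remark that this is just the classical argument transcribed.

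For item (MP): this one is immediate and almost definitional. Given $\Vdash_{\mathscr{B},\mathfrak{R}_A}\phi\to\psi$, unfold to $\phi\Vdash_{\mathscr{B},\mathfrak{R}_A}\psi$, i.e.\ for all $\mathscr{C}\supseteq\mathscr{B}$, if $\Vdash_{\mathscr{C},\mathfrak{R}_A}\phi$ then $\Vdash_{\mathscr{C},\mathfrak{R}_A}\psi$; instantiate at $\mathscr{C}=\mathscr{B}$ and use the second hypothesis $\Vdash_{\mathscr{B},\mathfrak{R}_A}\phi$ to conclude.

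The main obstacle, such as it is, is not conceptual but bookkeeping: one must be careful that the empty $\Delta$ really does make all of Definition~\ref{PALValidity}'s new machinery (effective updates, the two-case $K_a$ clause) inert, and that monotonicity (Lemma~\ref{PALMonotonicity}) and ex falso (Lemma~\ref{lem:PALEFQ}) are available with the $\Delta$ superscript — which they are, since both were proved for arbitrary $\Delta$. The one genuinely delicate point is axiom (3), where the classical proof implicitly uses that a consistent base at which $\phi$ fails extends to one at which $\phi\to\bot$ holds; here I would cite Lemma~\ref{lem:PALEFQ} together with the observation that a properly-extending inconsistent base supports $\bot$, so that $\phi\to\bot$ is supported as soon as every $\phi$-supporting extension is inconsistent. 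I expect the whole lemma to be dispatched in a few lines per item, with the bulk of the text being the single remark that the propositional fragment is untouched by announcements.
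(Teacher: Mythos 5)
Your treatment of (MP), (1), and (2) is essentially the paper's own: (MP) by instantiating the $\to$-clause at $\mathscr{C}=\mathscr{B}$, and (1), (2) by unfolding the $\to$-clause and carrying the premise up to extensions with Lemma~\ref{PALMonotonicity}. One small caution: $\phi,\psi,\chi$ here are arbitrary PAL formulas, not ``purely propositional'' ones, so the reduction is not literally ``Sandqvist verbatim''; it works only because the supporting lemmas (monotonicity, etc.) have been re-proved for the full PAL support relation, which you do note.

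For (3), however, there is a genuine gap. This is the classical contraposition axiom, and the step you need is: from ``$\phi\to\bot$ cannot hold at any consistent extension where the premise and $\psi$ hold'' conclude ``$\phi$ holds at $\mathscr{C}$''. Monotonicity and Lemma~\ref{lem:PALEFQ} do not give you this. Your fallback --- ``if $\phi$ failed at a consistent extension we could extend to witness $\phi\to\bot$ via Lemma~\ref{lem:PALEFQ} on the inconsistent super-base'' --- does not produce a contradiction: an inconsistent super-base supports \emph{everything} (including $\bot$, $\psi\to\bot$, and the premise), so nothing clashes there; the contradiction must be manufactured at a \emph{consistent} base. Likewise, your other phrasing, ``at the relevant extensions either $\phi$ or $\phi\to\bot$ is forced'', is exactly the excluded-middle property, which holds only at maximally-consistent bases (Lemma~\ref{lem:MaxConOr}) --- machinery you explicitly claim is not needed. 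The paper's proof of (3) runs as follows: assume $\nVdash_{\mathscr{C},\mathfrak{R}_A}\phi$; by Lemma~\ref{lem:MaxConOr} every maximally-consistent $\mathscr{C}^*\supseteq\mathscr{C}$ supports $\phi$ or $\phi\to\bot$; if it supported $\phi\to\bot$ then, since by monotonicity it also supports the premise and $\psi$, two applications of (MP) would force $\Vdash_{\mathscr{C}^*,\mathfrak{R}_A}\bot$ at a consistent base, which is impossible; hence $\phi$ holds at every maximally-consistent extension of $\mathscr{C}$, and the contrapositive of Lemma~\ref{lem: PALMaxCon} (failure of support is always witnessed at some maximally-consistent extension) yields $\Vdash_{\mathscr{C},\mathfrak{R}_A}\phi$. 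Without Lemma~\ref{lem: PALMaxCon} and Lemma~\ref{lem:MaxConOr} your argument only establishes the intuitionistically acceptable part and does not validate the axiom; you should cite and use them as the paper does.
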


    \begin{proof}
            We start by showing \emph{(MP)} as it is helpful in establishing the other results. Simply note that because $\Vdash_{\mathscr{B},\mathfrak{R}_A} \phi\to\psi$, we have for all $\mathscr{C}\supseteq\mathscr{B}$, if $\Vdash_{\mathscr{C},\mathfrak{R}_A} \phi$, then $\Vdash_{\mathscr{C},\mathfrak{R}_A} \psi$. By assumption, $\Vdash_{\mathscr{B},\mathfrak{R}_A} \phi$ and so $\Vdash_{\mathscr{B},\mathfrak{R}_A} \psi$.

    By Definition \ref{PALValidity}, for \emph{(1)} we need to show that for any $\mathscr{B}$, if $\Vdash_{\mathscr{B},\mathfrak{R}_A} \phi$, then $\Vdash_{\mathscr{B},\mathfrak{R}_A} \psi\to\phi$. By Lemma \ref{PALMonotonicity}, for any $\mathscr{C}\supseteq\mathscr{B}$ s.t. $\Vdash_{\mathscr{C},\mathfrak{R}_A} \psi$ we also have $\Vdash_{\mathscr{C},\mathfrak{R}_A} \phi$ and so $\Vdash_{\mathscr{B},\mathfrak{R}_A} \psi\to\phi$.

    For \emph{(2)} we follow the same strategy of showing that if the formula on the left of the turnstyle holds at a base, so does the right. To show that $\Vdash_{\mathscr{B},\mathfrak{R}_A} (\phi\to\psi)\to(\phi\to\chi)$, we need to show that for all $\mathscr{C}\supseteq\mathscr{B}$, if $\Vdash_{\mathscr{C},\mathfrak{R}_A} \phi\to\psi$, then $\Vdash_{\mathscr{C},\mathfrak{R}_A} \phi\to\chi$. Take some $\mathscr{D}\supseteq\mathscr{C}$ s.t. $\Vdash_{\mathscr{D},\mathfrak{R}_A} \phi\to(\psi\to\chi)$, $\Vdash_{\mathscr{D},\mathfrak{R}_A} \phi\to\psi$ and $\Vdash_{\mathscr{D},\mathfrak{R}_A} \phi$. By multiple application of \emph{(MP)}, we get $\Vdash_{\mathscr{D},\mathfrak{R}_A} \chi$ and so, $\Vdash_{\mathscr{C},\mathfrak{R}_A} \phi\to\chi$ and, finally, $\Vdash_{\mathscr{B},\mathfrak{R}_A} (\phi\to\psi)\to(\phi\to\chi)$.

    For \emph{(3)} we take some $\mathscr{B}$ with $\Vdash_{\mathscr{B},\mathfrak{R}_A} (\phi\to\bot)\to(\psi\to\bot)$. To show that $\Vdash_{\mathscr{B},\mathfrak{R}_A} \psi\to\phi$, we show that for all $\mathscr{C}\supseteq\mathscr{B}$ s.t. $\Vdash_{\mathscr{C},\mathfrak{R}_A} \psi$ we also have $\Vdash_{\mathscr{C},\mathfrak{R}_A} \phi$. Assume $\nVdash_{\mathscr{C},\mathfrak{R}_A} \phi$ for contradiction. By Lemma \ref{lem:MaxConOr}, all maximally-consistent $\mathscr{C}^*\supseteq\mathscr{C}$ either $\Vdash_{\mathscr{C}^*,\mathfrak{R}_A} \phi$ or $\Vdash_{\mathscr{C}^*,\mathfrak{R}_A} \phi\to\bot$. By Lemma \ref{PALMonotonicity}, we know that $\Vdash_{\mathscr{C}^*,\mathfrak{R}_A} (\phi\to\bot)\to(\psi\to\bot)$ and $\Vdash_{\mathscr{C}^*,\mathfrak{R}_A} \psi$. So, if $\Vdash_{\mathscr{C}^*,\mathfrak{R}_A} \phi\to\bot$, then, by \emph{(MP)}, $\Vdash_{\mathscr{C}^*,\mathfrak{R}_A} \psi\to\bot$ and $\Vdash_{\mathscr{C}^*,\mathfrak{R}_A} \bot$. This is a contradiction as $\mathscr{C}^*$ is, by assumption, consistent. So, $\phi$ holds at all maximally-consistent superset bases of $\mathscr{C}$ and so, by the contrapositive of Lemma \ref{lem: PALMaxCon}, $\Vdash_{\mathscr{C},\mathfrak{R}_A} \phi$ and so $\Vdash_{\mathscr{B},\mathfrak{R}_A} \psi \to \phi$.
    \end{proof}

    This concludes the classical portion. We continue with the modal axioms and (modal) \emph{(NEC)}.

\medskip 

 \begin{lemma}
 \label{lem:S5Axioms}
     The rules and axioms of $S5$ modal logic are valid on the base-extension semantics for PAL.

    \begin{itemize}
        \item[—](K) $K_a(\phi\to\psi)\Vdash K_a\phi\to K_a\psi$
        \item[—](T)  $K_a\phi\Vdash \phi$
        \item[—](4) $K_a \phi\Vdash K_a K_a\phi$
        \item[—](5) $\neg K_a \phi\Vdash K_a\neg K_a\phi$
        \item[—](NEC) If $\Vdash \phi$, then $\Vdash K_a\phi$
        
    \end{itemize}
 \end{lemma}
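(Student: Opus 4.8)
The plan is to unfold each statement via Definition~\ref{PALValidity} with the announcement sequence $\Delta$ empty --- none of \emph{(K)}, \emph{(T)}, \emph{(4)}, \emph{(5)}, or \emph{(NEC)} mentions announcements, so $\Delta$ never changes --- at which point the $K_a$-clause becomes the Kripke-style quantification over $\mathfrak{R}_a$-successors, and to read each ``$\Gamma\Vdash\chi$'' as: for every base $\mathscr{C}$ and every set of $S5$-modal relations $\mathfrak{R}_A$, if every member of $\Gamma$ is supported at $\mathscr{C}$, then so is $\chi$. Apart from \emph{(5)}, the arguments then mirror those for the B-eS of $S5$ in \cite{EckhardtPymS5}.

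In detail: \emph{(NEC)} is immediate, since $\Vdash\phi$ means $\Vdash_{\mathscr{C},\mathfrak{R}_A}\phi$ at every base and every $\mathfrak{R}_A$, in particular at every $\mathfrak{R}_a$-successor of a given $\mathscr{B}$. For \emph{(T)} we use reflexivity of $\mathfrak{R}_a$: $\mathscr{C}$ is among its own $\mathfrak{R}_a$-successors, so $\Vdash_{\mathscr{C},\mathfrak{R}_A} K_a\phi$ gives $\Vdash_{\mathscr{C},\mathfrak{R}_A}\phi$. For \emph{(4)} we use transitivity: if $\mathfrak{R}_a\mathscr{C}\mathscr{D}$ and $\mathfrak{R}_a\mathscr{D}\mathscr{E}$ then $\mathfrak{R}_a\mathscr{C}\mathscr{E}$, so $\Vdash_{\mathscr{C},\mathfrak{R}_A} K_a\phi$ propagates to $\Vdash_{\mathscr{E},\mathfrak{R}_A}\phi$ and hence $\Vdash_{\mathscr{C},\mathfrak{R}_A} K_aK_a\phi$. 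For \emph{(K)}, given $\Vdash_{\mathscr{C},\mathfrak{R}_A} K_a(\phi\to\psi)$ and a superset $\mathscr{D}\supseteq\mathscr{C}$ with $\Vdash_{\mathscr{D},\mathfrak{R}_A} K_a\phi$, Lemma~\ref{PALMonotonicity} lifts $K_a(\phi\to\psi)$ to $\mathscr{D}$; at each $\mathscr{E}$ with $\mathfrak{R}_a\mathscr{D}\mathscr{E}$ we then have both $\Vdash_{\mathscr{E},\mathfrak{R}_A}\phi$ and $\Vdash_{\mathscr{E},\mathfrak{R}_A}\phi\to\psi$, and unwinding the implication at $\mathscr{E}$ itself yields $\Vdash_{\mathscr{E},\mathfrak{R}_A}\psi$, whence $\Vdash_{\mathscr{D},\mathfrak{R}_A} K_a\psi$.

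The substantive case is \emph{(5)}. Writing $\neg\chi$ for $\chi\to\bot$ and unwinding the $\to$- and $\bot$-clauses, $\Vdash_{\mathscr{C},\mathfrak{R}_A}\neg K_a\phi$ is equivalent to: for every consistent $\mathscr{D}\supseteq\mathscr{C}$ there is an $\mathscr{E}$ with $\mathfrak{R}_a\mathscr{D}\mathscr{E}$ and $\nVdash_{\mathscr{E},\mathfrak{R}_A}\phi$; likewise $\Vdash_{\mathscr{C},\mathfrak{R}_A} K_a\neg K_a\phi$ reduces to showing, for every $\mathscr{E}$ with $\mathfrak{R}_a\mathscr{C}\mathscr{E}$ and every consistent $\mathscr{F}\supseteq\mathscr{E}$, that $\nVdash_{\mathscr{F},\mathfrak{R}_A} K_a\phi$. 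Assuming the former, fix such $\mathscr{E}$ and $\mathscr{F}$. Since $\mathfrak{R}_a$ is reflexive and Euclidean it is symmetric, so $\mathfrak{R}_a\mathscr{E}\mathscr{C}$; applying condition (c) of Definition~\ref{modalRelation} to the consistent superset $\mathscr{F}\supseteq\mathscr{E}$ gives $\mathscr{C}'\supseteq\mathscr{C}$ with $\mathfrak{R}_a\mathscr{F}\mathscr{C}'$, and $\mathscr{C}'$ is consistent by condition (b). The hypothesis at $\mathscr{C}'$ then produces $\mathscr{D}'$ with $\mathfrak{R}_a\mathscr{C}'\mathscr{D}'$ and $\nVdash_{\mathscr{D}',\mathfrak{R}_A}\phi$, and transitivity gives $\mathfrak{R}_a\mathscr{F}\mathscr{D}'$; thus $\mathscr{D}'$ witnesses $\nVdash_{\mathscr{F},\mathfrak{R}_A} K_a\phi$, as required.

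I expect the main obstacle to be exactly the bookkeeping in \emph{(5)}: the superset quantifiers built into the semantics of $\to$ and $\bot$ force one to replace $\mathscr{C}$ by a consistent superset $\mathscr{C}'$ before the ``witness'' reformulation of $\neg K_a\phi$ can be invoked, and this is where conditions (b) and (c) of Definition~\ref{modalRelation} --- rather than merely the $S5$ frame conditions --- are needed, alongside symmetry and transitivity. The fact that consistency is preserved along $\mathfrak{R}_a$ (conditions (a), (b), and Lemma~\ref{lem:PALEFQ}) is what makes the reformulation of $\neg K_a\phi$ legitimate in the first place; once that is set up, \emph{(5)} is a faithful transcription of the classical $S5$ argument, and the remaining axioms and \emph{(NEC)} are routine.
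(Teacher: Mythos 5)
Your proposal is correct and follows essentially the same route as the paper: unfold the empty-$\Delta$ clauses of Definition~\ref{PALValidity}, use reflexivity for \emph{(T)}, transitivity for \emph{(4)}, monotonicity (Lemma~\ref{PALMonotonicity}) for \emph{(K)}, and symmetry together with conditions (b) and (c) of Definition~\ref{modalRelation} for \emph{(5)}, exactly the ingredients the paper uses. The only differences are stylistic: the paper argues \emph{(K)}, \emph{(4)}, \emph{(5)}, and \emph{(NEC)} by contraposition or contradiction (and its \emph{(4)} additionally invokes condition (d)), whereas your direct versions are equally valid and, in the case of \emph{(4)}, slightly leaner.
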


\begin{proof}


    For \emph{(K)}, we show the contrapositive and assume $\nVdash_{\mathscr{B},\mathfrak{R}_A} K_a\phi\to K_a\psi$ for an arbitrary $\mathscr{B}$ and $\mathfrak{R}_A$. It follows that there is a $\mathscr{C}\supseteq\mathscr{B}$ s.t. $\Vdash_{\mathscr{C},\mathfrak{R}_A} K_a\phi$ and $\nVdash_{\mathscr{C},\mathfrak{R}_A} K_a\psi$. So, there are $\mathscr{D}\supseteq\mathscr{C}$ and $\mathscr{E}$ s.t. $\mathfrak{R}_a\mathscr{D}\mathscr{E}$ with $\Vdash_{\mathscr{E},\mathfrak{R}_A} \phi$ but $\nVdash_{\mathscr{E},\mathfrak{R}_A} \psi$ and so $\nVdash_{\mathscr{E},\mathfrak{R}_A} \phi\to\psi$ and $\nVdash_{\mathscr{D},\mathfrak{R}_A} K_a(\phi\to\psi)$. Since $\mathscr{D}\supseteq\mathscr{B}$, $\nVdash_{\mathscr{B},\mathfrak{R}_A} K_a(\phi\to\psi)$.  

    For \emph{(T)}, take some $\Vdash_{\mathscr{B},\mathfrak{R}_A} K_a\phi$. By the reflexivity of $\mathfrak{R}_a$, $\mathfrak{R}_A\mathscr{B}\mathscr{B}$ and so $\Vdash_{\mathscr{B},\mathfrak{R}_A} \phi$.

For \emph{(4)}, we, without loss of generality, assume some $\mathscr{B}$ and $\mathfrak{R}_A$ s.t. $\Vdash_{\mathscr{B}, \mathfrak{R}_A} K_a \phi$. For contradiction, we also assume $\nVdash_{\mathscr{B}, \mathfrak{R}_A} K_a K_a\phi$. So, there are $\mathscr{C}\supseteq\mathscr{B}$ and $\mathscr{D}$ s.t. $\mathfrak{R}_a\mathscr{C}\mathscr{D}$ and $\nVdash_{\mathscr{D}, \mathfrak{R}_A} K_a \phi$. This means there have to be $\mathscr{E}\supseteq\mathscr{D}$ and $\mathscr{F}$ s.t. $\mathfrak{R}_a\mathscr{E}\mathscr{F}$ and $\nVdash_{\mathscr{F}, \mathfrak{R}_A} \phi$. By Definition \ref{modalRelation} $(d)$, we know there is a $\mathscr{G}\subseteq\mathscr{F}$ and $\mathfrak{R}_a \mathscr{D}\mathscr{G}$ and, by transitivity, $\mathfrak{R}_a\mathscr{C}\mathscr{G}$. Obviously, $\nVdash_{\mathscr{G}, \mathfrak{R}_A} \phi$ and so $\nVdash_{\mathscr{B}, \mathfrak{R}_A} K_a \phi$, which contradicts our assumption.
    
    For \emph{(5)}, we assume, without loss of generality, some $\mathscr{B}$ and $\mathfrak{R}_A$ s.t. $\Vdash_{\mathscr{B}, \mathfrak{R}_A} \neg K_a \phi$ and, for contradiction, $\nVdash_{\mathscr{B}, \mathfrak{R}_A} K_a\neg K_a \phi$. So, there is $\mathscr{C}$ s.t. $\mathfrak{R}_a\mathscr{B}\mathscr{C}$ and $\nVdash_{\mathscr{C}, \mathfrak{R}_A} \neg K_a \phi$. So, there is a $\mathscr{D}\supseteq\mathscr{C}$ s.t $\mathscr{D}$ is consistent and $\Vdash_{\mathscr{D}, \mathfrak{R}_A} K_a \phi$.

    From $\Vdash_{\mathscr{B},\mathfrak{R}_A} \neg K_a \phi$ it follows that for all consistent $\mathscr{E}\supseteq\mathscr{B}$, we have $\nVdash_{\mathscr{B}, \mathfrak{R}_A} K_a \phi$. So there is a $\mathscr{F}$ s.t. $\mathfrak{R}_a\mathscr{E}\mathscr{F}$ and $\nVdash_{\mathscr{F}, \mathfrak{R}_A} \phi$.

    By symmetry, we have $\mathfrak{R}_a \mathscr{C}\mathscr{B}$ and so, by condition (c) of Definition \ref{modalRelation}, for one of these $\mathscr{E}\supseteq\mathscr{B}$ we have $\mathfrak{R}_a\mathscr{D}\mathscr{E}$ and, again by symmetry, $\mathfrak{R}_a \mathscr{E}\mathscr{D}$. By Euclidean and $\mathfrak{R}_a\mathscr{E}\mathscr{F}$, we have $\mathfrak{R}_a\mathscr{D}\mathscr{F}$ which gives us our contradiction: We have $\Vdash_{\mathscr{D}, \mathfrak{R}_A} K_a \phi$ and $\nVdash_{\mathscr{F}, \mathfrak{R}_A} \phi$.

    \iffalse{
    
    For (5), we assume, without loss of generality, some $\mathscr{B}$ and $\mathfrak{R}$ s.t. $\Vdash^\gamma_{\mathscr{B}, \mathfrak{R}} \lozenge \phi$ and, for contradiction, $\nVdash^\gamma_{\mathscr{B}, \mathfrak{R}} \square\lozenge \phi$. So, there are $\mathscr{C}\supseteq\mathscr{B}$ and $\mathscr{D}$ s.t. $\mathfrak{R}\mathscr{C}\mathscr{D}$ and $\nVdash^\gamma_{\mathscr{D}, \mathfrak{R}} \lozenge \phi$. So, there are $\mathscr{E}\supseteq\mathscr{D}$ and for all $\mathscr{F}$ s.t. $\mathfrak{R}\mathscr{E}\mathscr{F}$ $\nVdash^\gamma_{\mathscr{F}, \mathfrak{R}} \phi$.

    \noindent By $\Vdash^\gamma_{\mathscr{B}, \mathfrak{R}} \lozenge \phi$, we know there is a $\mathscr{G}$ s.t. $\mathfrak{R}\mathscr{C}\mathscr{G}$ and $\Vdash^\gamma_{\mathscr{G}, \mathfrak{R}} \phi$ and, by Euclidean, $\mathfrak{R}\mathscr{D}\mathscr{G}$. By (c), we know that there is an $\mathscr{H}\supseteq\mathscr{G}$ s.t. $\mathfrak{R}\mathscr{E}\mathscr{H}$ and, by Lemma \ref{ModalMonotonicity}, $\Vdash^\gamma_{\mathscr{H}, \mathfrak{R}} \phi$. So, we have a contradiction.

DIAMOND!
}\else

    We show \emph{(NEC)} via the contrapositive. If $\nVdash K_a\phi$, then $\nVdash \phi$. Suppose $\nVdash K_a\phi$. So, there is a $\mathscr{B}$ and a $\mathfrak{R}_A$ s.t. $\nVdash_{\mathscr{B},\mathfrak{R}_A} K_a\phi$. Therefore, there are $\mathscr{C}\supseteq\mathscr{B}$ and $\mathscr{D}$ s.t. $\mathfrak{R}_a\mathscr{C}\mathscr{D}$ and $\nVdash_{\mathscr{D},\mathfrak{R}_A} \phi$ and, so, $\nVdash\phi$.
\end{proof}

Before showing the axioms of PAL, we show Lemma \ref{lem:annK}. It is used in the proof of \emph{announcement and knowledge}.

\medskip 

\begin{lemma}
    \label{lem:annK}

For all $\mathscr{B}$, $\mathfrak{R}_A$, and $\phi$ the following are equivalent:

\begin{enumerate}
    \item[—]For all $\mathscr{C}\supseteq \mathscr{B}$, $\mathfrak{R}'_A \in \mathfrak{R}_A|\phi^\mathscr{C}$ and $\mathscr{D}$ s.t. for all corresponding updates $\mathscr{C}^+$ and $\mathscr{D}^+$ of $\mathscr{C}$ and $\mathscr{D}$, $\mathfrak{R}'_a \mathscr{C}^+\mathscr{D}^+$ , $\Vdash_{\mathscr{D},\mathfrak{R}_A}^{\phi}\psi$. 
    
    \item[—]For all $\mathscr{X}$ s.t. $\mathfrak{R}_a \mathscr{B}\mathscr{X}$ and $\mathscr{Y}\supseteq\mathscr{X}$, if $\Vdash_{\mathscr{Y},\mathfrak{R}_A} \phi$, then $\Vdash_{\mathscr{Y},\mathfrak{R}_A}^{\phi}\psi$.
\end{enumerate}
    
\end{lemma}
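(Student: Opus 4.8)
The plan is to prove the two implications separately, after observing that (1) is exactly the unfolding of $\Vdash_{\mathscr{B},\mathfrak{R}_A}^{\phi}K_a\psi$ and (2) exactly that of $\Vdash_{\mathscr{B},\mathfrak{R}_A}K_a[\phi]\psi$, so that the lemma is the semantic content of the \emph{announcement and knowledge} axiom. In both directions the work is to unfold the relevant clauses of Definition~\ref{PALValidity} together with the defining equivalence for effective updates (Definition~\ref{def:effective}), and then to pass between supersets of $\mathscr{B}$ and supersets of the $\mathfrak{R}_a$-successors of $\mathscr{B}$ using symmetry of the $S5$-modal relation $\mathfrak{R}_a$ and the ``zig-zag'' conditions (c) and (d) of Definition~\ref{modalRelation}; inconsistent bases are disposed of separately by Lemma~\ref{lem:PALEFQ}. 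For $(2)\Rightarrow(1)$, fix $\mathscr{C}\supseteq\mathscr{B}$, an effective update $\mathfrak{R}'_A\in\mathfrak{R}_A|\phi^\mathscr{C}$, and a base $\mathscr{D}$ with $\mathfrak{R}'_a\mathscr{C}\mathscr{D}$; one must show $\Vdash_{\mathscr{D},\mathfrak{R}_A}^{\phi}\psi$. If $\mathscr{D}$ is inconsistent this is Lemma~\ref{lem:PALEFQ}, so assume $\mathscr{D}$, and hence $\mathscr{C}$ by condition (a), consistent. Reflexivity of $\mathfrak{R}'_a$ gives $\mathfrak{R}'_a\mathscr{C}\mathscr{C}$, and since $\mathscr{C}$ and $\mathscr{D}$ are reachable from $\mathscr{C}$ by $\mathfrak{R}_A$ the effective-update equivalence yields $\Vdash_{\mathscr{C},\mathfrak{R}_A}\phi$, $\Vdash_{\mathscr{D},\mathfrak{R}_A}\phi$ and $\mathfrak{R}_a\mathscr{C}\mathscr{D}$. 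Applying condition (d) of Definition~\ref{modalRelation} to $\mathfrak{R}_a\mathscr{C}\mathscr{D}$ (target $\mathscr{D}$ consistent) and the subset $\mathscr{B}\subseteq\mathscr{C}$ produces an $\mathscr{X}\subseteq\mathscr{D}$ with $\mathfrak{R}_a\mathscr{B}\mathscr{X}$; then $\mathscr{X}$ and $\mathscr{Y}:=\mathscr{D}$ instantiate the hypothesis of (2), which delivers $\Vdash_{\mathscr{D},\mathfrak{R}_A}^{\phi}\psi$.

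For $(1)\Rightarrow(2)$, fix $\mathscr{X}$ with $\mathfrak{R}_a\mathscr{B}\mathscr{X}$, a superset $\mathscr{Y}\supseteq\mathscr{X}$, and assume $\Vdash_{\mathscr{Y},\mathfrak{R}_A}\phi$; one must show $\Vdash_{\mathscr{Y},\mathfrak{R}_A}^{\phi}\psi$. If $\mathscr{Y}$ is inconsistent this is again Lemma~\ref{lem:PALEFQ}, so assume $\mathscr{Y}$ consistent; then $\mathscr{X}$ is consistent (a subset of a consistent base) and hence so is $\mathscr{B}$ by condition (a). By symmetry $\mathfrak{R}_a\mathscr{X}\mathscr{B}$, so condition (c) of Definition~\ref{modalRelation}, applied with the consistent superset $\mathscr{Y}\supseteq\mathscr{X}$, yields a base $\mathscr{C}\supseteq\mathscr{B}$ with $\mathfrak{R}_a\mathscr{Y}\mathscr{C}$, and by symmetry $\mathfrak{R}_a\mathscr{C}\mathscr{Y}$. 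The aim is to exhibit an effective update $\mathfrak{R}'_A\in\mathfrak{R}_A|\phi^\mathscr{C}$ containing the edge $\mathfrak{R}'_a\mathscr{C}\mathscr{Y}$, since then (1), instantiated at $\mathscr{C}$, $\mathfrak{R}'_A$ and $\mathscr{D}:=\mathscr{Y}$, gives the claim. Such an update exists and does contain that edge as soon as $\Vdash_{\mathscr{C},\mathfrak{R}_A}\phi$ holds: then $\mathscr{C}$ and $\mathscr{Y}$ are reachable from $\mathscr{C}$ and both support $\phi$, so the canonical construction $R_A|\phi^\mathscr{C}$ of Definition~\ref{def:UpdateConstruction}, which is an effective update by Lemmas~\ref{PALrelationIsModal} and~\ref{lem:effective}, relates them. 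The remaining task is therefore to arrange, by a further use of conditions (c) and (d), that the base $\mathscr{C}$ produced above can be chosen so that $\Vdash_{\mathscr{C},\mathfrak{R}_A}\phi$.

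I expect the main obstacle to be precisely this reconciliation of the two quantifier domains: condition (1) effectively ranges only over those supersets $\mathscr{C}$ of $\mathscr{B}$ on which $\phi$ is supported — a consistent $\mathscr{C}$ admits no effective update at all unless $\Vdash_{\mathscr{C},\mathfrak{R}_A}\phi$, since reflexivity of an $S5$-modal relation forces $\mathfrak{R}'_a\mathscr{C}\mathscr{C}$ and the effective-update equivalence then requires $\Vdash_{\mathscr{C},\mathfrak{R}_A}\phi$ — whereas condition (2) ranges over the $\phi$-supporting supersets of the $\mathfrak{R}_a$-successors of $\mathscr{B}$. Bridging the two requires conditions (c) and (d) to be used in tandem, (c) lifting along $\subseteq$ at the $\mathscr{B}$-end and (d) descending at the successor-end, so that an arbitrary successor/superset pair can be replaced by one whose first coordinate both dominates $\mathscr{B}$ and supports $\phi$. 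A secondary care point is that the effective-update equivalence only constrains pairs of bases \emph{reachable} from the base of evaluation, so one must check that the bases manipulated in the argument stay within the reachable region — automatic for the canonical construction by Step~2 of Definition~\ref{def:UpdateConstruction} — whereas the inconsistent-base cases are immediate from Lemma~\ref{lem:PALEFQ} together with conditions (a)--(b).
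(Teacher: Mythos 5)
Your direction $(2)\Rightarrow(1)$ is fine and is essentially the paper's own ``bottom-to-top'' argument: from $\mathfrak{R}'_a\mathscr{C}\mathscr{D}$ you read off $\mathfrak{R}_a\mathscr{C}\mathscr{D}$, $\Vdash_{\mathscr{C},\mathfrak{R}_A}\phi$ and $\Vdash_{\mathscr{D},\mathfrak{R}_A}\phi$ from the effective-update biconditional, and condition (d) of Definition~\ref{modalRelation} supplies an $\mathscr{X}\subseteq\mathscr{D}$ with $\mathfrak{R}_a\mathscr{B}\mathscr{X}$, so that $\mathscr{Y}:=\mathscr{D}$ instantiates (2); the inconsistent case is handled by Lemma~\ref{lem:PALEFQ} just as you say.

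The genuine gap is in $(1)\Rightarrow(2)$, which you do not finish. Having obtained, via symmetry and condition (c), a base $\mathscr{C}\supseteq\mathscr{B}$ with $\mathfrak{R}_a\mathscr{C}\mathscr{Y}$, you correctly observe that the edge $(\mathscr{C},\mathscr{Y})$ lies in an effective update at $\mathscr{C}$ only if $\Vdash_{\mathscr{C},\mathfrak{R}_A}\phi$, and you then defer exactly this to ``a further use of conditions (c) and (d)''. That deferred step is the crux of the whole direction, and it is not clear it can be carried out as described: the base $\mathscr{Y}$ is \emph{fixed}, and if you pass from $\mathscr{C}$ to a $\phi$-supporting superset $\mathscr{C}'$, condition (c) only returns some $\mathscr{Y}'\supseteq\mathscr{Y}$ with $\mathfrak{R}_a\mathscr{C}'\mathscr{Y}'$, not $\mathscr{Y}$ itself; nothing in conditions (a)--(d) manufactures a $\phi$-supporting $\mathscr{C}\supseteq\mathscr{B}$ related to the given $\mathscr{Y}$. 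So your argument for $(1)\Rightarrow(2)$ is a sketch whose essential step is left open. (For comparison, the paper's proof is equally silent on this side condition: it picks an effective update at $\mathscr{Y}$ containing the edge $\mathscr{C}\mathscr{Y}$ ``since $\Vdash_{\mathscr{Y},\mathfrak{R}_A}\phi$'', without checking $\Vdash_{\mathscr{C},\mathfrak{R}_A}\phi$, so you have in fact located the weak point of the published argument --- but locating it is not resolving it.) The gap does close if one adds the hypothesis $\Vdash_{\mathscr{B},\mathfrak{R}_A}\phi$: then every $\mathscr{C}\supseteq\mathscr{B}$ supports $\phi$ by Lemma~\ref{PALMonotonicity} and your choice of $\mathscr{C}$ already suffices; and that hypothesis is available in the lemma's only use, inside the proof of \emph{announcement and knowledge}, where the equivalence is invoked under the assumption that the evaluation base supports $\phi$. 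For the lemma as stated, over arbitrary $\mathscr{B}$, you have not supplied the missing argument.
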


\begin{proof}
    We show that any $\mathscr{D}$ in the former corresponds to some $\mathscr{Y}$ in the latter and vice versa. 

    For the direction from 1 to 2, since $\mathfrak{R}_a'\mathscr{C}^+\mathscr{D}^+$, we know that for the corresponding $\mathscr{C}$ and $\mathscr{D}$ before the update, $\mathfrak{R}_a\mathscr{C}\mathscr{D}$ and, by condition (d) of modal relations (Definition \ref{modalRelation}), there is an $\mathscr{X}\subseteq\mathscr{D}$ s.t. $\mathfrak{R}_a\mathscr{B}\mathscr{X}$. Additionally, we know that in order for $\mathscr{D}^+$ to remain after the update $\Vdash_{\mathscr{D},\mathfrak{R}_A}\phi$. Since this holds for all $\mathscr{D}^+$, we can replace it with $\mathscr{Y}^+$ and conclude that for all $\mathscr{X}$ s.t. $\mathfrak{R}_a\mathscr{B\mathscr{X}}$ and $\mathscr{Y}\supseteq\mathscr{X}$, if $\Vdash_\mathscr{Y},\mathfrak{R}_A \phi$, then $\Vdash_{\mathscr{Y}^+,\mathfrak{R}'_A}\psi$ and so, by Lemma \ref{lem:EquivalenceOfEffectiveUpdates}, $\Vdash_{\mathscr{Y},\mathfrak{R}_A}^\phi \psi$.

    For the direction from 2 to 1, we know, by symmetry, that $\mathfrak{R}_A\mathscr{X}\mathscr{B}$ and so, by condition (c) of modal relations (Definition \ref{modalRelation}), for every $\mathscr{Y}\supseteq\mathscr{X}$ there has to be a $\mathscr{C}\supseteq\mathscr{B}$ s.t. $\mathfrak{R}_a \mathscr{Y}\mathscr{C}$ and, again by symmetry, $\mathfrak{R}_A\mathscr{C}\mathscr{Y}$. Let $\mathscr{D}$ be any $\mathscr{Y}$ for which $\Vdash_{\mathscr{Y},\mathfrak{R}_A}\phi$. So for any effective update $\mathfrak{R}'_A$ there are $\mathscr{C}^+$ and $\mathscr{D}^+$ that are corresponding updates of $\mathscr{C}$ and $\mathscr{D}$ and $\mathfrak{R}'_A \mathscr{C}^+\mathscr{D}^+$ and $\Vdash^\phi_{\mathscr{D}, \mathfrak{R}_A}\psi$.
\end{proof}

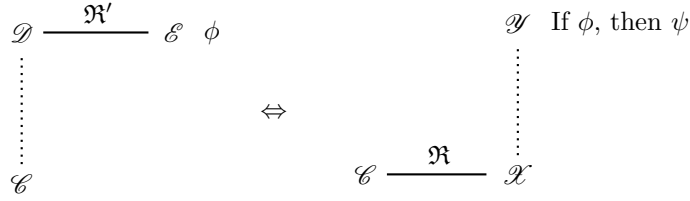
\begin{figure}
\begin{center}
\begin{tabular}{c p{.7cm} c}
     
\begin{tikzpicture}[-,shorten >=2pt,node distance =2cm, thick, baseline={([yshift=-1.8ex]current bounding box.center)}]

\node (B) [above of=A] {$\mathscr{C}$};
\node(D) [above of=B] {$\mathscr{D}$};
\node[label=right:{$\phi$}] (E) [right of=D] {$\mathscr{E}$};

\draw[dotted] (B) to node {} (D);
\draw[-,above] (D) to node {$\mathfrak{R}'$} (E);

\end{tikzpicture}

&

$\Leftrightarrow$

&

\begin{tikzpicture}[-,shorten >=2pt,node distance =2cm, thick, baseline={([yshift=-1.8ex]current bounding box.center)}]

\node (B) [above of=A] {$\mathscr{C}$};
\node (C) [right of=B] {$\mathscr{X}$};
\node[label=right:{If $\phi$, then $\psi$}] (E) [right of=D] {$\mathscr{Y}$};

\draw[dotted] (C) to node {} (E);
\draw[-,above] (B) to node {$\mathfrak{R}$} (C);

\end{tikzpicture}

\end{tabular}
\caption{\label{Fig:AnnK} Illustration of steps two (left) and three (right) of the proof of Announcement and Knowledge. Note that $\mathfrak{R}'\mathscr{D}\mathscr{E}$ only if $\Vdash_{\mathscr{E}, \mathfrak{R}'}\phi$.}
\end{center}
\end{figure}

Given that we can now show that the new axioms and rules specific to PAL also hold for our system.

\medskip 

\begin{lemma}
\label{lem:PALAxioms}
    The axioms and rules of PAL are valid on the base-extension semantics for PAL.

 \[
    \begin{array}{l@{\quad}l}
    [\phi]p \leftrightarrow \phi\to p & \mbox{atomic permanence} \\\relax
    [\phi]\bot\leftrightarrow \phi\to\bot & \mbox{announcement and bot} \\\relax
    [\phi](\psi\to\chi)\leftrightarrow [\phi]\psi \to [\phi]\chi  & \mbox{announcement and implication} \\\relax
    [\phi]K_a \psi\leftrightarrow \phi\to K_a[\phi]\psi  & \mbox{announcement and knowledge} \\\relax   
    [\phi][\psi]\chi\leftrightarrow [\phi\wedge[\phi]\psi]\chi  & \mbox{announcement composition} \\

    \mbox{If $\psi$, then $[\phi]\psi$} & \mbox{necessitation of announcements}
    \end{array}\]

\end{lemma}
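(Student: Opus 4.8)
The plan is to establish, for each biconditional axiom $A\leftrightarrow B$, that $\Vdash_{\mathscr{B},\mathfrak{R}_A}^{\Delta}A$ holds exactly when $\Vdash_{\mathscr{B},\mathfrak{R}_A}^{\Delta}B$ holds, at every base $\mathscr{B}$, set of $S5$-modal relations $\mathfrak{R}_A$, and sequence $\Delta$ (for validity, empty $\Delta$ suffices). This yields $\Vdash A\to B$ and $\Vdash B\to A$, and hence $\Vdash A\leftrightarrow B$ by Lemma~\ref{lem:ClassicalAxioms} together with \emph{(MP)}, since $\wedge$ and $\leftrightarrow$ are classically definable from $\to$ and $\bot$. \emph{Atomic permanence} and \emph{announcement and bot} are then immediate: the support clauses for $p$ and for $\bot$ in Definition~\ref{PALValidity} mention neither $\Delta$ nor $\mathfrak{R}_A$, so $\Vdash_{\mathscr{C},\mathfrak{R}_A}^{\Delta,\phi}p$ iff $p\in\overline{\mathscr{C}}$ iff $\Vdash_{\mathscr{C},\mathfrak{R}_A}^{\Delta}p$, and unfolding $[\phi]p$ gives precisely the clause defining $\phi\to p$ (similarly for $[\phi]\bot$). \emph{Necessitation of announcements} is equally direct: if $\Vdash\psi$ then $\psi$ stays supported after any announcement chain --- the updated relations being again $S5$-modal by Lemma~\ref{PALrelationIsModal} --- so $\Vdash_{\mathscr{C}}^{\phi}\psi$ at every $\mathscr{C}$ with $\Vdash_{\mathscr{C}}\phi$, i.e.\ $\Vdash[\phi]\psi$.

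For \emph{announcement and implication}, the direction from $[\phi](\psi\to\chi)$ to $[\phi]\psi\to[\phi]\chi$ is a routine unfolding of the clauses for $[\cdot]$ and $\to$, together with Lemma~\ref{PALMonotonicity}. The converse is the first genuine obstacle. Assuming $\Vdash_{\mathscr{B}}^{\Delta}[\phi]\psi\to[\phi]\chi$, I would take $\mathscr{C}\supseteq\mathscr{B}$ with $\Vdash_{\mathscr{C}}^{\Delta}\phi$ and $\mathscr{D}\supseteq\mathscr{C}$ with $\Vdash_{\mathscr{D}}^{\Delta,\phi}\psi$, and suppose for contradiction that $\nVdash_{\mathscr{D}}^{\Delta,\phi}\chi$. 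Since $\Vdash_{\mathscr{D}}^{\Delta}\phi$ holds (in the empty-$\Delta$ case the relevant chain-formula is just $\phi$), Lemma~\ref{lem: PALMaxCon} produces a maximally-consistent $\mathscr{D}^{*}\supseteq\mathscr{D}$ with $\nVdash_{\mathscr{D}^{*}}^{\Delta,\phi}\chi$. At $\mathscr{D}^{*}$ the only consistent superset is $\mathscr{D}^{*}$ itself, so $\Vdash_{\mathscr{D}^{*}}^{\Delta}[\phi]\psi$ (using Lemma~\ref{PALMonotonicity} for $\psi$ and Lemma~\ref{lem:PALEFQ} at inconsistent supersets); by monotonicity of the assumed implication this gives $\Vdash_{\mathscr{D}^{*}}^{\Delta}[\phi]\chi$, hence $\Vdash_{\mathscr{D}^{*}}^{\Delta,\phi}\chi$ (as $\Vdash_{\mathscr{D}^{*}}^{\Delta}\phi$), a contradiction.

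\emph{Announcement and knowledge} is exactly what Lemma~\ref{lem:annK} was stated for. Unfolding $[\phi]K_a\psi$ at $\mathscr{B}$ gives: for all $\mathscr{C}\supseteq\mathscr{B}$ with $\Vdash_{\mathscr{C}}\phi$, condition~(1) of Lemma~\ref{lem:annK} holds with its ``$\mathscr{B}$'' instantiated to $\mathscr{C}$; unfolding $\phi\to K_a[\phi]\psi$ at $\mathscr{B}$ gives: for all $\mathscr{C}\supseteq\mathscr{B}$ with $\Vdash_{\mathscr{C}}\phi$, condition~(2) of Lemma~\ref{lem:annK} holds with ``$\mathscr{B}$'' instantiated to $\mathscr{C}$. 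Since the lemma asserts (1)$\Leftrightarrow$(2), the two formulae are supported at the same bases. For \emph{announcement composition} $[\phi][\psi]\chi\leftrightarrow[\phi\wedge[\phi]\psi]\chi$ I would again pass to maximally-consistent bases via Lemma~\ref{lem: PALMaxCon} (the empty-$\Delta$ chain-formula being a tautology, which is always supported): there, by Lemma~\ref{PALBehaviour}, each $[\cdot]$ behaves functionally, so $[\phi][\psi]\chi$ unfolds to ``$\Vdash_{\mathscr{B}^{*}}\phi$ and $\Vdash_{\mathscr{B}^{*}}^{\phi}\psi$ imply $\Vdash_{\mathscr{B}^{*}}^{\phi;\psi}\chi$'', and $[\phi\wedge[\phi]\psi]\chi$ unfolds --- using Lemma~\ref{PALBehaviour} for the definable $\wedge$ and Corollary~\ref{DELTAformula} (generalising Lemma~\ref{CompR}) to identify the announcement of $\phi\wedge[\phi]\psi$ with the chain $\phi;\psi$ --- to the same condition. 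Hence the two formulae coincide at all maximally-consistent bases, and, by the contrapositive of Lemma~\ref{lem: PALMaxCon}, everywhere.

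The main obstacle is the cluster of converse directions, for \emph{announcement and implication} and \emph{announcement composition}, which cannot be read off the semantic clauses and instead require descending to maximally-consistent bases where $[\cdot]$ is functional (Lemma~\ref{PALBehaviour}); the delicate points are checking that the side hypothesis $\Vdash_{\mathscr{B},\mathfrak{R}_A}\phi(\Delta)$ of Lemma~\ref{lem: PALMaxCon} is genuinely available at the base in play, and, for composition, that the single announcement $\phi\wedge[\phi]\psi$ induces the same modal relations --- and hence the same support relation --- as the two-step chain $\phi;\psi$, which is precisely the content of Lemma~\ref{CompR} and Corollary~\ref{DELTAformula}.
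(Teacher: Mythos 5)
Your handling of atomic permanence, announcement-and-bot, and announcement-and-knowledge coincides with the paper's (the last is, as you say, exactly an unfolding of both sides followed by Lemma~\ref{lem:annK}). For announcement-and-implication and announcement composition you take a genuinely different route: the paper proves both by direct chains of equivalences --- shuffling the precondition $\Vdash\phi$ and the quantification over supersets in the implication case, and invoking Lemma~\ref{CompR} inside the chain in the composition case --- whereas you descend to maximally-consistent bases via Lemma~\ref{lem: PALMaxCon} and use Lemma~\ref{PALBehaviour} there. Your route checks out: the side condition of Lemma~\ref{lem: PALMaxCon} is available (it is $\phi$ itself, resp.\ a tautology, for empty ambient $\Delta$, which suffices for validity), Lemma~\ref{PALMonotonicity} and Lemma~\ref{lem:PALEFQ} do give $\Vdash[\phi]\psi$ at the maximally-consistent extension, and your appeal to Lemma~\ref{CompR} to identify $\Vdash^{\phi\wedge[\phi]\psi}$ with $\Vdash^{\phi,\psi}$ is exactly the step the paper itself relies on. You trade the paper's quantifier manipulations for the heavier but more uniform maximal-base machinery; both work.

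The one genuine gap is necessitation of announcements, which you call ``equally direct''. The claim that validity of $\psi$ entails that ``$\psi$ stays supported after any announcement chain'' is precisely what has to be proved, and citing Lemma~\ref{PALrelationIsModal} alone does not deliver it: validity of $\psi$ gives $\Vdash_{\mathscr{C},\mathfrak{R}'_A}\psi$ for every set of $S5$-modal relations $\mathfrak{R}'_A$, but the target is $\Vdash^{\phi}_{\mathscr{C},\mathfrak{R}_A}\psi$, and the $\Delta$-indexed support relation is not ordinary support under a single updated relation --- its $K_a$ clause quantifies over all supersets of $\mathscr{C}$ and all effective updates at each of them. The bridge between the two notions is only available at maximally-consistent bases (the step ``$\Vdash^{\phi}_{\mathscr{C}^*,\mathfrak{R}_A}\chi$ iff $\Vdash_{\mathscr{C}^*,\mathfrak{R}'_A}\chi$'' that the paper extracts from Lemma~\ref{PALBehaviour}), which is why the paper argues by contraposition: from $\nVdash_{\mathscr{B},\mathfrak{R}_A}[\phi]\psi$ it obtains, via Lemma~\ref{lem: PALMaxCon}, a maximally-consistent $\mathscr{C}^*$ with $\nVdash^{\phi}_{\mathscr{C}^*,\mathfrak{R}_A}\psi$, transfers this to $\nVdash_{\mathscr{C}^*,\mathfrak{R}'_A}\psi$ for an effective update $\mathfrak{R}'_A$, and contradicts the validity of $\psi$ using Lemma~\ref{PALrelationIsModal}. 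You deploy exactly this machinery for the other axioms, so the repair is immediate, but as written this case asserts its conclusion rather than proving it.
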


\begin{proof}
\;
\begin{center}   
$[\phi]p \leftrightarrow \phi\to p$ 
\end{center}  
 By definition of validity, $\Vdash_{\mathscr{B},\mathfrak{R}_A}[\phi]p$ iff for all $\mathscr{C}\supseteq\mathscr{B}$, if $\Vdash_{\mathscr{C},\mathfrak{R}_A} \phi$, then $\Vdash_{\mathscr{B},\mathfrak{R}_A}^{\phi} p$. Similarly, $\Vdash_{\mathscr{B},\mathfrak{R}_A}\phi \to p$ iff for all $\mathscr{C}\supseteq\mathscr{B}$, if $\Vdash_{\mathscr{C},\mathfrak{R}_A} \phi$, then $\Vdash_{\mathscr{C},\mathfrak{R}_A} p$. It suffices to point out that $\Delta$ has no bearing on the support of propositional $p$.
\begin{center}
    $[\phi]\bot \leftrightarrow \phi\to\bot$
\end{center}

The case for $\bot$ works analogously to the case for $p$. Simply note that $\bot$ holds iff all $p$ hold. So, this follows immediately from the result for atomic permanence.


\begin{center}
    $[\phi](\psi\to\chi) \leftrightarrow [\phi]\psi \to [\phi]\chi$
\end{center}

This case is done by simple unpacking of the definitions and moving the precondition $\phi$:

\[\begin{array}{l@{\quad}c@{\quad}l}
\Vdash_{\mathscr{B},\mathfrak{R}_A}[\phi](\psi\to\chi) & \mbox{iff} & \mbox{for all $\mathscr{C}\supseteq\mathscr{B}$, if $\Vdash_{\mathscr{C}, \mathfrak{R}_A} \phi$,}\\
& & \mbox{then $\Vdash_{\mathscr{C},\mathfrak{R}_A}^{\phi} \psi\to\chi$}\\

& \mbox{iff} & \mbox{for all $\mathscr{C}\supseteq\mathscr{B}$, if $\Vdash_{\mathscr{C}, \mathfrak{R}_A} \phi$,}\\
& & \mbox{then for all $\mathscr{D}\supseteq\mathscr{C}$, if $\Vdash_{\mathscr{D},\mathfrak{R}_A}^{\phi} \psi$,}\\

& & \mbox{then $\Vdash_{\mathscr{D},\mathfrak{R}_A}^{\phi} \chi$}\\

& \mbox{iff} & \mbox{for all $\mathscr{C}\supseteq\mathscr{B}$, if $\Vdash_{\mathscr{C},\mathfrak{R}_A}\phi$ implies that for all $\mathscr{D}\supseteq\mathscr{C}$,}\\

& & \mbox{$\Vdash_{\mathscr{D},\mathfrak{R}_A}^{\phi}\psi$, then $\Vdash_{\mathscr{C},\mathfrak{R}_A}\phi$ implies that for all $\mathscr{E}\supseteq\mathscr{C}$,}\\

& & \mbox{$\Vdash_{\mathscr{E},\mathfrak{R}_A}^{\phi}\chi$}\\

\Vdash_{\mathscr{B},\mathfrak{R}_A}[\phi]\psi\to[\phi]\chi & \mbox{iff} & \mbox{for all $\mathscr{C}\supseteq\mathscr{B}$, if $\Vdash_{\mathscr{C},\mathfrak{R}_A}[\phi]\psi$,}\\
& & \mbox{then $\Vdash_{\mathscr{C},\mathfrak{R}_A}[\phi]\chi$}

\end{array}
\]

For the steps between the second and third line, we simply moved $\Vdash_{\mathscr{C},\mathfrak{R}_A^\mathscr{C}}\phi$ and the universal quantification over the supersets of $\mathscr{C}$ between inside or outside the implication.



\begin{center}
    $[\phi]K_a\psi \leftrightarrow \phi\to K_a [\phi]\psi$
\end{center}

This case is also proven straightforwardly by unpacking and moving around the precondition $\phi$. The only exception is the step between the second and third line which holds by Lemma \ref{lem:annK}.

\[\begin{array}{l@{\quad}c@{\quad}l}
\Vdash_{\mathscr{B},\mathfrak{R}_A}[\phi]K_a\psi & \mbox{iff} & \mbox{for all $\mathscr{C}\supseteq\mathscr{B}$, if $\Vdash_{\mathscr{C}, \mathfrak{R}_A} \phi$,}\\
& & \mbox{then $\Vdash_{\mathscr{C},\mathfrak{R}_A}^{\phi} K_a\psi$}\\

& \mbox{iff} & \mbox{for all $\mathscr{C}\supseteq\mathscr{B}$, if $\Vdash_{\mathscr{C}, \mathfrak{R}_A} \phi$,}\\
& & \mbox{then for all $\mathscr{D\supseteq\mathscr{C}}$, $\mathscr{E}$, $\mathfrak{R}_A'\in \mathfrak{R}_A|\phi^\mathscr{D}$,}\\

& & \mbox{and updates $\mathscr{D}^+, \mathscr{E}^+$ of $\mathscr{D},\mathscr{E}$ for $\mathfrak{R}_A'$ s.t. $\mathfrak{R}'_a\mathscr{D}^+\mathscr{E}^+$,}\\

& & \mbox{$\Vdash_{\mathscr{E},\mathfrak{R}_A}^{\phi}\psi$}\\


& \mbox{iff} & \mbox{for all $\mathscr{C}\supseteq\mathscr{B}$, if $\Vdash_{\mathscr{C}, \mathfrak{R}_A} \phi$,}\\
& & \mbox{then for all $\mathscr{X}$ s.t. $\mathfrak{R}_a\mathscr{C}\mathscr{X}$ and $\mathscr{Y}\supseteq\mathscr{X}$, if $\Vdash_{\mathscr{Y},\mathfrak{R}_A}\phi$,}\\
& & \mbox{then $\Vdash_{\mathscr{Y},\mathfrak{R}_A}^{\phi}\psi$}\\

& \mbox{iff} & \mbox{for all $\mathscr{C}\supseteq\mathscr{B}$, if $\Vdash_{\mathscr{C}, \mathfrak{R}_A} \phi$,}\\
& & \mbox{then for all $\mathscr{X}$ s.t. $\mathfrak{R}_a\mathscr{C}\mathscr{X}$, $\Vdash_{\mathscr{X},\mathfrak{R}_A}[\phi]\psi$}\\

\Vdash_{\mathscr{B},\mathfrak{R}_A}\phi \to K_a [\phi]\psi & \mbox{iff} & \mbox{for all $\mathscr{C}\supseteq\mathscr{B}$, if $\Vdash_{\mathscr{C}, \mathfrak{R}_A} \phi$,}\\

& & \mbox{then $\Vdash_{\mathscr{C},\mathfrak{R}_A}K_a [\phi]\psi$}\\

\end{array}\]

\begin{center}
    $[\phi][\psi]\chi \leftrightarrow [\phi\wedge [\phi]\psi]\chi$
\end{center}

 This case, again, is proven by unpacking and because $\Vdash_{\mathscr{D},\mathfrak{R}_A}^{\phi\wedge[\phi]\psi}\chi$ iff $\Vdash_{\mathscr{D},\mathfrak{R}_A}^{\phi;\psi}\chi$ by Lemmas \ref{CompR} and \ref{lem:EquivalenceOfEffectiveUpdates}:

\[\begin{array}{l@{\quad}c@{\quad}l}
\Vdash_{\mathscr{B},\mathfrak{R}_A}[\phi][\psi]\chi & \mbox{iff} & \mbox{for all $\mathscr{C}\supseteq\mathscr{B}$, if $\Vdash_{\mathscr{C}, \mathfrak{R}_A} \phi$,}\\
& & \mbox{then $\Vdash_{\mathscr{C},\mathfrak{R}_A}^{\phi} [\psi]\chi$}\\

& \mbox{iff} & \mbox{for all $\mathscr{C}\supseteq\mathscr{B}$, if $\Vdash_{\mathscr{C},\mathfrak{R}_A}^{\phi} \phi$, then for all $\mathscr{D}\supseteq\mathscr{C}$,}\\
& & \mbox{ if $\Vdash_{\mathscr{D},\mathfrak{R}_A}^{\phi} \psi$, then $\Vdash_{\mathscr{D},\mathfrak{R}_A}^{\phi,\psi} \chi$}\\

& \mbox{iff} & \mbox{for all $\mathscr{C}\supseteq\mathscr{B}$, if $\Vdash_{\mathscr{C}, \mathfrak{R}_A} \phi$, then for all $\mathscr{D}\supseteq\mathscr{C}$,}\\
& & \mbox{if $\Vdash_{\mathscr{D},\mathfrak{R}_A}^{\phi} \psi$, then $\Vdash_{\mathscr{D},\mathfrak{R}_A}^{\phi\wedge [\phi]\psi} \chi$}\\

& \mbox{iff} & \mbox{for all $\mathscr{C}\supseteq\mathscr{B}$, if $\Vdash_{\mathscr{C}, \mathfrak{R}_A} \phi$ and for all $\mathscr{D}\supseteq\mathscr{C}$,}\\
& & \mbox{$\Vdash_{\mathscr{D}, \mathfrak{R}_A} \phi$ implies $\Vdash_{\mathscr{D},\mathfrak{R}_A}^{\phi} \psi$, then $\Vdash_{\mathscr{C},\mathfrak{R}_A}^{\phi\wedge [\phi]\psi} \chi$}\\

\Vdash_{\mathscr{B},\mathfrak{R}_A}[\phi\wedge[\phi]\psi]\chi & \mbox{iff} & \mbox{for all $\mathscr{C}\supseteq\mathscr{B}$, if $\Vdash_{\mathscr{C}, \mathfrak{R}_A} \phi\wedge [\phi]\psi$,}\\
& & \mbox{then $\Vdash_{\mathscr{C},\mathfrak{R}_A}^{\phi\wedge [\phi]\psi} \chi$}\\

\end{array}\]


\begin{center}
    If $\psi$, then $[\phi]\psi$
\end{center}

We show this by contrapositive: In order for $\nVdash_{\mathscr{B}, \mathfrak{R}_A} [\phi]\psi$, there has to be a $\mathscr{C}\supseteq\mathscr{B}$ with $\Vdash_{\mathscr{C}, \mathfrak{R}_A} \phi$ and $\nVdash_{\mathscr{C},\mathfrak{R}_A}^{\phi} \psi$. By Lemma \ref{lem: PALMaxCon}, there is a maximally-consistent $\mathscr{C}^*\supseteq\mathscr{C}$ s.t. $\nVdash_{\mathscr{C}^*, \mathfrak{R}_A|\phi^{\mathscr{C}^*}} \psi$. Take some relation $\mathfrak{R}'_A \in \mathfrak{R}_A^{\mathscr{C}^*}|\phi$. By Lemma \ref{PALrelationIsModal}, we know that $\mathfrak{R}'_a$ is a $S5$-modal relation. From Lemma \ref{PALBehaviour}, it follows straightforwardly that for all formulae $\chi$, $\Vdash_{\mathscr{C}^*,\mathfrak{R}_A}^{\phi} \chi$ iff $\Vdash_{\mathscr{C}^{*+}, \mathfrak{R}'_A} \chi$ for all updates $\mathscr{C}^{*+}$ of $\mathscr{C}^*$ for $\mathfrak{R}_a'$. So we have $\nVdash_{\mathscr{C}^*, \mathfrak{R}_A'} \psi$, which contradicts our assumption that $\psi$ is valid.
%
%
\end{proof}

We can now proof the soundness of our system.

\medskip 

\begin{theorem}
    \label{theorem:Soundness}
    For all $\phi$, $\vdash \phi$ implies $\Vdash \phi$.
\end{theorem}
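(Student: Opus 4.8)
The plan is to prove soundness by induction on the length of a Hilbert-style derivation witnessing $\vdash \phi$. Concretely, if $\phi_1, \dots, \phi_n = \phi$ is such a derivation, I would show by induction on $i$ that $\Vdash \phi_i$ --- that is, $\Vdash_{\mathscr{B},\mathfrak{R}_A} \phi_i$ for every base $\mathscr{B}$ and every set of $S5$-modal relations $\mathfrak{R}_A$. Essentially all of the work has already been carried out in the preceding lemmas, so the theorem amounts to assembling those pieces, together with a little bookkeeping to pass between base-local validity $\Vdash_{\mathscr{B},\mathfrak{R}_A}$ and global validity $\Vdash$.

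For the base case, $\phi_i$ is an axiom. The three classical axioms (1)--(3) are handled by Lemma \ref{lem:ClassicalAxioms}: since $\Vdash_{\mathscr{B},\mathfrak{R}_A}\,\chi_1\to\chi_2$ unfolds, by Definition \ref{PALValidity}, to $\chi_1\Vdash_{\mathscr{B},\mathfrak{R}_A}\chi_2$, validity of, say, $\phi\to(\psi\to\phi)$ is exactly the statement $\phi\Vdash\psi\to\phi$ of that lemma once it is quantified over all $\mathscr{B},\mathfrak{R}_A$, and similarly for (2) and (3). The modal axioms (K), (T), (4), (5) are given by Lemma \ref{lem:S5Axioms}, again using the unfolding of $\to$. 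The PAL-specific axioms --- atomic permanence, announcement and bot, announcement and implication, announcement and knowledge, and announcement composition --- are precisely the content of Lemma \ref{lem:PALAxioms}.

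For the inductive step, $\phi_i$ is obtained from earlier formulae by one of the three rules. If $\phi_i$ follows by \emph{(MP)} from $\phi_j$ and $\phi_k = \phi_j\to\phi_i$ with $j,k<i$, then by the induction hypothesis $\Vdash\phi_j$ and $\Vdash\phi_j\to\phi_i$; fixing an arbitrary $\mathscr{B}$ and $\mathfrak{R}_A$, the \emph{(MP)} clause of Lemma \ref{lem:ClassicalAxioms} yields $\Vdash_{\mathscr{B},\mathfrak{R}_A}\phi_i$, and since $\mathscr{B},\mathfrak{R}_A$ were arbitrary, $\Vdash\phi_i$. If $\phi_i = K_a\psi$ follows by \emph{(NEC)} from $\psi$, then $\Vdash\psi$ by the induction hypothesis and $\Vdash K_a\psi$ by the \emph{(NEC)} clause of Lemma \ref{lem:S5Axioms}. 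If $\phi_i = [\psi]\chi$ follows by necessitation of announcements from $\chi$, then $\Vdash\chi$ by the induction hypothesis and $\Vdash[\psi]\chi$ by the corresponding clause of Lemma \ref{lem:PALAxioms}. Taking $i = n$ gives $\Vdash\phi$.

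Since every case reduces to a previously established lemma, there is no genuine obstacle at this stage; the only point requiring attention is the distinction just noted --- for axioms presented as a consequence $\Gamma\Vdash\psi$ or as a biconditional one must unfold Definition \ref{PALValidity} and quantify over $\mathscr{B}$ and $\mathfrak{R}_A$ --- and the observation that \emph{(MP)}, although stated base-locally in Lemma \ref{lem:ClassicalAxioms}, preserves global validity because it can be applied uniformly at each $\mathscr{B},\mathfrak{R}_A$. The real difficulty of the development lies earlier, in Lemma \ref{PALrelationIsModal} and Lemma \ref{lem:PALAxioms}, not in this final inductive argument.
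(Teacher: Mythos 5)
Your proposal is correct and matches the paper's argument: the paper's proof is just the one-line observation that soundness follows from Lemmas \ref{lem:ClassicalAxioms}, \ref{lem:S5Axioms} and \ref{lem:PALAxioms}, and your induction on the length of a derivation is exactly the routine bookkeeping that this one-liner leaves implicit. No substantive difference.
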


\begin{proof}
    This follows directly from Lemmas \ref{lem:ClassicalAxioms}, \ref{lem:S5Axioms} and \ref{lem:PALAxioms}.
\end{proof}

We establish completeness in a known way for PAL by translation between PAL and S5. We show that for every formula in PAL there is an equivalent formula in S5. The result then follows from the Completeness of S5 and the Soundness of PAL just shown. The steps of this proof are taken from \cite{Ditmarsch2007}. We start by defining the necessary translation function based on the axioms for PAL. 

\medskip 

\begin{definition}
    \label{PALtranslation}
    The translation $t: PAL \to S5$ is defined as follows: 

\[\begin{array}{l@{\quad}c@{\quad}l}
t(p) & = & p\\

t(\bot) & = & \bot \\


t(\phi\to\psi) & = & t(\phi) \to t(\psi)\\

t(K_a \phi) & = & K_a t(\phi)\\

t([\phi] p) & = & t(\phi\to p)\\

t([\phi] \bot) & = & t(\phi\to \bot)\\

t([\phi] (\psi\to\chi)) & = & t([\phi]\psi \to [\phi]\chi)\\

t([\phi] K_a\psi) & = & t(\phi\to K_a[\phi]\psi)\\

t([\phi][\psi]\chi ) & = & t([\phi\wedge [\phi]\psi] \chi)\\

\end{array}
\]
\fillBox     
\end{definition} \medskip

This translation is used for an inductive proof to show that for every formula in PAL there is an equivalent formula in $S5$. To do this we need a complexity measure to show that such an induction concludes.

\medskip 

\begin{definition}
    \label{TranslationComplexity}
The complexity $c : PAL \to \mathbb{N}$ is defined as follows: 
    \[\begin{array}{l@{\quad}c@{\quad}l}
    
    c(p) & = & 1\\
    c(\bot) & = & 1\\
    c(\phi\to\psi) & = & 1+ max(c(\phi),c(\psi))\\
    c(K_a\phi) & = & 1+ c(\phi)\\
    c([\phi]\psi) & = & (2+ c(\phi)) \cdot c(\psi)\\

    \end{array}
    \]
    \fillBox 
\end{definition} \medskip

Before we go to the necessary complexity results, here are some helpful preliminary results based on the definitions of $\neg$ and $\wedge$ from our primary connectives.

\medskip 

\begin{lemma}
    \label{prelimComplexivity}
The following equations hold: \smallskip

\begin{enumerate}
    \item[—]$c(\neg \phi) = 1+ c(\phi)$
    \item[—]$c(\phi\wedge\psi) = 3+ max(1+c(\phi); c(\psi)$
\end{enumerate}

\end{lemma}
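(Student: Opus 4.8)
The plan is to reduce both equations to routine unfoldings of the abbreviations $\neg$ and $\wedge$ in terms of the primitive connectives $\to$ and $\bot$, and then apply the clauses of Definition~\ref{TranslationComplexity} directly. The single auxiliary fact I need is that $c(\chi)\ge 1$ for every formula $\chi$; I would establish this first by a one-line induction on $\chi$ (the base cases give $c(p)=c(\bot)=1$, and none of the clauses of Definition~\ref{TranslationComplexity} ever decreases the value), since it is what lets me simplify $\max(c(\chi),c(\bot))=\max(c(\chi),1)$ to $c(\chi)$ each time a negation is unfolded.

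For~(1): $\neg\phi$ abbreviates $\phi\to\bot$, so $c(\neg\phi)=1+\max(c(\phi),c(\bot))=1+\max(c(\phi),1)=1+c(\phi)$. For~(2): I would unfold $\phi\wedge\psi$ as $\neg(\neg\phi\vee\neg\psi)$ and $\alpha\vee\beta$ as $\neg\alpha\to\beta$, so that $\phi\wedge\psi$ becomes $\neg(\neg\neg\phi\to\neg\psi)$. Using~(1) on each negation gives $c(\neg\neg\phi)=2+c(\phi)$ and $c(\neg\psi)=1+c(\psi)$; the $\to$-clause then gives $c(\neg\neg\phi\to\neg\psi)=1+\max(2+c(\phi),1+c(\psi))$; and one further application of~(1) for the outermost negation yields $c(\phi\wedge\psi)=2+\max(2+c(\phi),1+c(\psi))$. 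I would finish by normalising the right-hand side: pulling a common $1$ out of the maximum, $\max(2+c(\phi),1+c(\psi))=1+\max(1+c(\phi),c(\psi))$, whence $c(\phi\wedge\psi)=3+\max(1+c(\phi),c(\psi))$.

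I do not anticipate any real difficulty here. The only point that needs care is making explicit which abbreviation convention is in force: the value stated for $c(\phi\wedge\psi)$ is precisely the one delivered by the De~Morgan reading $\phi\wedge\psi:=\neg(\neg\phi\vee\neg\psi)$, whereas the alternative \emph{direct} reading $\neg(\phi\to\neg\psi)$ would give $2+\max(c(\phi),1+c(\psi))$ instead; so I would state the intended convention at the start of the proof. Beyond that, everything reduces to the elementary identity $\max(a+k,b+k)=k+\max(a,b)$ applied a couple of times, together with $c(\chi)\ge 1$.
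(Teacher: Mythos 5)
Your proof is correct and follows essentially the same route as the paper: unfold $\neg\phi$ as $\phi\to\bot$ using $c(\chi)\ge 1$, and unfold $\phi\wedge\psi$ as $\neg(\neg\neg\phi\to\neg\psi)$ (the same convention the paper uses), then compute with the clauses of Definition~\ref{TranslationComplexity} and the identity $\max(a+k,b+k)=k+\max(a,b)$. Your explicit remark about fixing the abbreviation convention and the short induction for $c(\chi)\ge 1$ merely make explicit what the paper states informally.
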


\begin{proof}
    For the first equation,$\neg\phi$ is shorthand for $\phi\to\bot$. We have $c(\phi\to\bot) = 1+ max(c(\phi);c(\bot))$ and $c(\bot) = 1$. Depending on $\phi$, $c(\phi)$ is at least $1$ and so $max$ never has to pick $c(\bot)$ and so $c(\neg\phi) = 1 + c(\phi)$.

    For the second equation. $\phi\wedge\psi$ is $\neg(\neg\neg\phi\to\neg\psi)$ (for simplicity we use $\neg$ instead of $\to\bot$ here). By the above, $c(\neg(\neg\neg\phi\to\neg\psi)= 1 + c(\neg\neg\phi\to\neg\psi) = 1+1+ max(c(\neg\neg\phi);c(\neg\psi)) = 2 + max(2+c(\phi);1+c(\psi)) = 3+ max(1+c(\phi);c(\psi))$. 
\end{proof}

Given this measure the following hold:

\medskip 

\begin{lemma}
    \label{LowerComplexity}

The following hold: \smallskip

    \begin{enumerate}
        \item[—]$c(\psi)\geq c(\phi)$ if $\phi\in Sub(\psi)$
        \item[—]$c([\phi] p) > c(\phi\to p)$
        \item[—]$c([\phi]\bot) > c(\phi\to\bot)$
        \item[—]$c([\phi](\psi\to\chi)) > c([\phi]\psi\to[\phi]\chi)$
        \item[—]$c([\phi]K_a\psi) > c(\phi\to K_a [\phi]\psi)$
        \item[—]$c([\phi][\psi]\chi) > c([\phi\wedge[\phi]\psi]\chi)$.
    \end{enumerate}
    
\end{lemma}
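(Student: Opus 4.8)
The plan is to dispose of part~(1) by structural induction and parts~(2)--(6) by direct arithmetic on the clauses of Definition~\ref{TranslationComplexity}, using throughout the elementary fact that $c(\alpha)\geq 1$ for every PAL-formula $\alpha$. This fact has an immediate induction on $\alpha$ (each clause either adds a positive constant or multiplies positive integers), and I would record it first, since it is exactly what lets one resolve every $\max$ appearing below. For part~(1) I would induct on the structure of $\psi$: if $\phi=\psi$ there is nothing to show, and otherwise $\phi\in Sub(\chi)$ for an immediate subformula $\chi$ of $\psi$, so it suffices to note that each clause of $c$ is strictly increasing in each argument, i.e.\ $c(\chi_1\to\chi_2)=1+\max(c(\chi_1),c(\chi_2))>c(\chi_i)$, $c(K_a\chi)=1+c(\chi)>c(\chi)$, and $c([\chi_1]\chi_2)=(2+c(\chi_1))c(\chi_2)$ exceeds both $c(\chi_1)$ (as $(2+c(\chi_1))c(\chi_2)\geq 2+c(\chi_1)$) and $c(\chi_2)$ (as $(2+c(\chi_1))c(\chi_2)\geq 3c(\chi_2)$); then $c(\phi)\leq c(\chi)<c(\psi)$ by the induction hypothesis.

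For parts~(2)--(3), compute $c([\phi]p)=c([\phi]\bot)=(2+c(\phi))\cdot 1=2+c(\phi)$ while $c(\phi\to p)=c(\phi\to\bot)=1+\max(c(\phi),1)=1+c(\phi)$, so the gap is exactly $1$. For part~(4), set $k:=2+c(\phi)\geq 3$ and $m:=\max(c(\psi),c(\chi))$; the left side is $k(1+m)=k+km$ and the right side is $1+\max(kc(\psi),kc(\chi))=1+km$, so the difference is $k-1\geq 2$. For part~(5), with $k:=2+c(\phi)\geq 3$ and $n:=c(\psi)\geq 1$, the left side is $k(1+n)=k+kn$; on the right, $c(K_a[\phi]\psi)=1+kn$, and since $kn\geq k=2+c(\phi)>c(\phi)$ we get $c(\phi\to K_a[\phi]\psi)=1+\max(c(\phi),1+kn)=2+kn$, so the difference is $k-2\geq 1$.

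Part~(6) is the one demanding real care and is the main obstacle. Here $c([\phi][\psi]\chi)=(2+c(\phi))(2+c(\psi))\,c(\chi)$, and to expand the right-hand side I would invoke Lemma~\ref{prelimComplexivity}(2): $c(\phi\wedge[\phi]\psi)=3+\max\big(1+c(\phi),\,c([\phi]\psi)\big)$. Since $c([\phi]\psi)=(2+c(\phi))c(\psi)\geq 2+c(\phi)>1+c(\phi)$, the $\max$ selects $(2+c(\phi))c(\psi)$, so $c([\phi\wedge[\phi]\psi]\chi)=\big(5+(2+c(\phi))c(\psi)\big)c(\chi)$. Subtracting, the common factor $c(\chi)$ survives and the difference equals $\big((2+c(\phi))(2+c(\psi))-5-(2+c(\phi))c(\psi)\big)c(\chi)=\big(2(2+c(\phi))-5\big)c(\chi)=(2c(\phi)-1)\,c(\chi)$, which is $\geq 1$ by $c(\phi),c(\chi)\geq 1$. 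The only genuinely delicate steps in the whole argument are identifying which branch of each $\max$ is active --- always settled by the inequality $c(\alpha)\geq 1$ --- and the bookkeeping in part~(6) where the derived formula for $c(\,\cdot\wedge\cdot\,)$ must be substituted correctly.
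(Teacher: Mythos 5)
Your proposal is correct and follows essentially the same route as the paper's proof: direct arithmetic on the clauses of Definition~\ref{TranslationComplexity}, resolving each $\max$ via $c(\alpha)\geq 1$, and invoking Lemma~\ref{prelimComplexivity}(2) to expand $c(\phi\wedge[\phi]\psi)$ in part~(6), with the same resulting gaps (e.g.\ $c(\phi)$ in part~(5) and $(2c(\phi)-1)c(\chi)$ in part~(6)). The only difference is that you spell out the easy induction behind part~(1) and the fact $c(\alpha)\geq 1$, which the paper leaves implicit.
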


\begin{proof}

The first result follows immediately from the definition of our complexity measure in Definition \ref{TranslationComplexity}.

The second and third result follow in the same way as $c(p)=c(\bot)=1$. So, we only show it for $\bot$. $c(\phi\to \bot) = 1+ c(\phi)$ as in the proof of Lemma \ref{prelimComplexivity}. $c([\phi]\bot) = (2+c(\phi))\times 1$. Obviously, $2+c(\phi) > 1+c(\phi)$.

For the fourth result, $c([\phi]\psi\to\chi) = (2+c(\phi)) \times (1+max(c(\psi);c(\chi)))$ and $c([\phi]\psi \to [\phi]\chi) = 1+ max(c([\phi]\psi;[\phi]\chi)) = 1+ max(((2+c(\phi))\times c(\psi)); ((2+c(\phi))\times c(\chi))) = 1+ ((2+c(\phi)) \times max(c(\psi);c(\chi)))$. The result $(2+c(\phi)) \times (1+max(c(\psi);c(\chi)))\geq 1+ ((2+c(\phi)) \times max(c(\psi);c(\chi)))$ follows because $(2+c(\phi)$ is greater than $1$.

For the fifth result, $c([\phi]K_a\psi) = (2+c(\phi))\times (1+c(\psi))$ and $c(\phi\to K_a[\phi]\psi) = 1+ max(c(\phi); 1+((2+c(\phi))\times c(\psi)))$.  Obviously $c(\phi)$ cannot be greater than $1+((2+c(\phi))\times c(\psi))$ and so, we simplify to $2+((2+c(\phi))\times c(\psi))$. To show the result it suffices to point out that $2$ is less than $2+c(\phi)$.

For the final result of this lemma, $c([\phi][\psi]\chi)= (2+c(\phi)) \times ((2+c(\psi)) \times c(\chi)$ and $c([\phi\wedge[\phi]\psi]\chi) = (2 + c(\phi\wedge[\phi]\psi)) \times c(\chi) = (2+3+max(1+c(\phi); c([\phi]\psi)))\times c(\chi)$. Note that $1+c(\phi)$ is never greater than $c([\phi]\psi)$ and so, we can simplify again to $(5+c([\phi]\psi))\times c(\chi) = (5+ ((2+c(\phi))\times c(\psi)) \times c(\chi)$. If we compare $(2+c(\phi)) \times ((2+c(\psi)) \times c(\chi)$ and $(5+ ((2+c(\phi))\times c(\psi)) \times c(\chi)$, the former is greater if and only if $(2\times (2+c(\phi) \times c(\chi))$ is greater than $5$. This obviously holds.
\end{proof}

This allows us to establish the following result: 

\medskip 

\begin{lemma}
    \label{translationEquivalence}
    For all formulae $\phi$, $\Vdash \phi \leftrightarrow t(\phi)$.
    
\end{lemma}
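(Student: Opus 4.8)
The plan is to argue by strong induction on the translation complexity $c(\phi)$ of Definition \ref{TranslationComplexity}, using the already-established validity of all PAL axioms and rules (Lemmas \ref{lem:ClassicalAxioms}, \ref{lem:S5Axioms}, and \ref{lem:PALAxioms}, together with Soundness, Theorem \ref{theorem:Soundness}) to manipulate biconditionals at the level of validity. The base cases $\phi = p$ and $\phi = \bot$ are immediate, since $t(p) = p$ and $t(\bot) = \bot$ and $\Vdash \theta \leftrightarrow \theta$ for every $\theta$. For the propositional and modal cases $\phi = \psi \to \chi$ and $\phi = K_a\psi$, the immediate subformulae have strictly smaller complexity by Lemma \ref{LowerComplexity}(1), so the induction hypothesis gives $\Vdash \psi \leftrightarrow t(\psi)$ (and, where needed, $\Vdash \chi \leftrightarrow t(\chi)$). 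Because the semantics validates classical propositional logic and the normal modal rules, replacement of valid equivalents inside the contexts $(\cdot)\to\chi$, $t(\psi)\to(\cdot)$, and $K_a(\cdot)$ is available: from $\Vdash \psi \leftrightarrow t(\psi)$ and $\Vdash \chi \leftrightarrow t(\chi)$ one derives $\Vdash (\psi\to\chi) \leftrightarrow (t(\psi)\to t(\chi))$, and from $\Vdash \psi\to t(\psi)$ one gets, by \emph{(NEC)} and \emph{(K)}, $\Vdash K_a\psi \to K_a t(\psi)$, with the converse symmetric; by Definition \ref{PALtranslation} these are exactly $\Vdash \phi \leftrightarrow t(\phi)$.

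The main case is $\phi = [\psi]\chi$, handled by a secondary case split on the outermost shape of $\chi$ that mirrors the clauses of $t$. If $\chi$ is $p$, $\bot$, $\chi_1\to\chi_2$, $K_a\chi'$, or $[\chi_1]\chi_2$, then the corresponding reduction axiom (atomic permanence, announcement and bot, announcement and implication, announcement and knowledge, or announcement composition, all valid by Lemma \ref{lem:PALAxioms}) gives $\Vdash [\psi]\chi \leftrightarrow \theta$, where $\theta$ is precisely the right-hand side of that axiom. By Lemma \ref{LowerComplexity}(2)--(6) we have $c(\theta) < c([\psi]\chi)$, so the induction hypothesis applies to $\theta$ and yields $\Vdash \theta \leftrightarrow t(\theta)$; and by the definition of $t$ on announcement formulae, $t(\theta) = t([\psi]\chi)$. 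Transitivity of the biconditional (again available since the semantics is classical, via Lemma \ref{lem:ClassicalAxioms}) then gives $\Vdash [\psi]\chi \leftrightarrow t([\psi]\chi)$, completing the induction.

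The work here is essentially bookkeeping: checking that each reduction axiom strictly decreases $c$, which is exactly the content of Lemma \ref{LowerComplexity}, and observing that because the translation always rewrites the \emph{outermost} announcement first, one never needs replacement of equivalents underneath a context $[\psi](\cdot)$ — so the elementary propositional and modal congruence facts above suffice. The only thing to be careful about is the well-foundedness of the induction, but since $c$ takes values in $\mathbb{N}$ and every rewriting step strictly lowers it, the recursion terminates; this is the point of introducing $c$ in the first place.
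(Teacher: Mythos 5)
Your proposal is correct and takes essentially the same route as the paper: induction on the complexity measure $c$, with the base cases trivial, the $\to$ and $K_a$ cases handled via the already-validated classical and modal principles, and the announcement cases discharged by the validity of the reduction axioms (Lemma \ref{lem:PALAxioms}) together with Lemma \ref{LowerComplexity} and the defining clauses of $t$ --- the paper's own proof is just a much terser statement of exactly this argument. (One small nit: the strict inequalities $c(\psi), c(\chi) < c(\psi\to\chi)$ and $c(\psi) < c(K_a\psi)$ come directly from Definition \ref{TranslationComplexity}, not from Lemma \ref{LowerComplexity}(1), which only gives $\geq$.)
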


\begin{proof}
    As mentioned above, we establish this by induction on the complexity of $\phi$.

    For the base case note that $t(p)=p$. It is trivial to show that $\Vdash p\leftrightarrow p$.

    All other cases follow from the induction hypothesis and the corresponding case of Definition \ref {TranslationComplexity} and Lemma \ref{LowerComplexity}.
\end{proof}

From this and the completeness of $S5$ the following theorem follows immediately.

\medskip 

\begin{theorem}
    \label{PALComp}

    For all formulae of $PAL$, $\Vdash \phi$ implies $\vdash \phi$.
    
\end{theorem}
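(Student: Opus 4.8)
The plan is to establish completeness for PAL's base-extension semantics by the standard translation argument, reducing to the already-established completeness of $S5$ (Theorem~\ref{def:S5completeness}) together with the soundness of PAL just proved (Theorem~\ref{theorem:Soundness}). First I would take an arbitrary PAL-formula $\phi$ with $\Vdash \phi$. By Lemma~\ref{translationEquivalence} we have $\Vdash \phi \leftrightarrow t(\phi)$, and since $t(\phi)$ is a formula of pure $S5$ (by inspection of Definition~\ref{PALtranslation}, the translation eliminates all announcement operators), applying modus ponens at the level of validity gives $\Vdash t(\phi)$. Because $t(\phi)$ contains no announcements, the support relation $\Vdash_{\mathscr{B},\mathfrak{R}_A}^{\Delta}$ with empty $\Delta$ restricted to $S5$-formulae coincides with the $S5$ support relation of Definition~\ref{EXTValidity}; hence $\Vdash t(\phi)$ in the sense of the $S5$ base-extension semantics.

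Next I would invoke the completeness half of Theorem~\ref{def:S5completeness}: from $\Vdash t(\phi)$ in the $S5$ B-eS we obtain $\vdash^{S5} t(\phi)$, i.e.\ $t(\phi)$ is provable in the axiomatic system of Definition~\ref{def:S5Axioms}. Since every axiom and rule of $S5$ is also available in the PAL proof system (the PAL system extends the $S5$ system), we have $\vdash t(\phi)$ in PAL. Finally, the PAL axioms for announcements (atomic permanence, announcement-and-bot, announcement-and-implication, announcement-and-knowledge, and announcement composition), read as biconditionals, together with the classical reasoning available in PAL, let us prove $\vdash \phi \leftrightarrow t(\phi)$ syntactically: this is the usual induction on $c(\phi)$ using Lemma~\ref{LowerComplexity} to see that each rewriting step strictly decreases complexity, so that the translation is effectively a provable chain of equivalences. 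Combining $\vdash t(\phi)$ with $\vdash \phi \leftrightarrow t(\phi)$ and modus ponens yields $\vdash \phi$, as required.

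The one point that needs a little care — and which I would expect to be the main obstacle, though it is really bookkeeping rather than a deep difficulty — is the interface between the two notions of validity. One must check that ``$\Vdash$'' as defined for PAL in Definition~\ref{PALValidity} genuinely restricts, on announcement-free formulae and empty $\Delta$, to ``$\Vdash$'' as defined for $S5$ in Definition~\ref{EXTValidity}, so that the $S5$ completeness theorem can legitimately be applied to $t(\phi)$; this is immediate from comparing the two clause-lists, since the $p$, $\bot$, $\to$, and $K_a$ clauses (with empty $\Delta$) are syntactically identical. The only other subtlety is that $\vdash \phi \leftrightarrow t(\phi)$ must be a \emph{syntactic} PAL-provability statement, not merely the semantic Lemma~\ref{translationEquivalence}; but since each equation of Definition~\ref{PALtranslation} is an instance of a PAL axiom (or, for the propositional and $K_a$ clauses, follows by congruence, which is derivable in any system with the classical axioms, $K$, and necessitation), this induction goes through exactly as in the classical PAL completeness proof recorded in \cite{Ditmarsch2007}. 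Once these two checks are in place, the theorem follows immediately, as indicated.
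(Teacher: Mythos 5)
Your proposal is correct and follows essentially the same route as the paper: reduce $\phi$ to its announcement-free translation $t(\phi)$ via Lemma~\ref{translationEquivalence}, invoke the completeness of the $S5$ base-extension semantics (Theorem~\ref{def:S5completeness}) to get $\vdash^{S5} t(\phi)$, and transfer back to PAL since its proof system extends that of $S5$. Your two explicit checks --- that the empty-$\Delta$ PAL support relation coincides with the $S5$ one on announcement-free formulae, and that $\phi\leftrightarrow t(\phi)$ must be provable \emph{syntactically} from the reduction axioms rather than merely valid --- are precisely the bookkeeping the paper's terser proof leaves implicit, and on the latter point your argument is in fact more careful than the paper's one-line appeal to Theorem~\ref{theorem:Soundness} and Lemma~\ref{translationEquivalence}.
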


\begin{proof}
    Assume $\Vdash \phi$. By Theorem \ref{theorem:Soundness} and Lemma \ref{translationEquivalence}, we get $\vdash \phi \leftrightarrow t(\phi)$. Since $t(\phi)$ does not contain any announcement formulae and $S5$ is complete (see Theorem \ref{def:S5completeness}), we have $\vdash^{S5} t(\phi)$. Since PAL is an extension of S5, we also have $\vdash t(\phi)$ and so $\vdash \phi$.
\end{proof}

\section{Examples of Public Announcements Revisited}
\label{sec:ExamplesRevisited}

We have established a B-eS for PAL, but it is important to see how this semantics can be used to analyze the connection between announcements and information. To this end, we discuss the two example puzzles, we have already given in Sections \ref{subsec:standard3pcg} and \ref{subsec:standardmcp}, respectively. 

We have chosen these two examples because they highlight two different aspects of how information is treated differently in the inferentialist setup of base-extension semantics compared to the modal theoretic view of Kripke semantics. 

Concretely, in the card game example we highlight how base rules can be used intensionally to construct bases that best correspond to the situation one is trying to model, in contrast to the extensional character of worlds and valuations in Kripke semantics.

In our discussion of the muddy children puzzle, we focus on investigating exactly what base rules are actually necessary to include in bases in order for the example to play out in the expected way or, phrased differently, consider what information do the children need in order to be able to find out that they are muddy. 


In both cases, we give what we call minimal set-ups for a base-extension modelling of the examples. We use minimal here in the rather weak sense that any extension of the bases will result in the desired results. We do not make the stronger claim that these are the minimal possible requirements and any weaker set-up will fail. Although such a result would be desirable, it goes beyond the scope of this paper. 



\subsection{Three-player Card Game}

As we have seen in Subsection \ref{subsec:standard3pcg}, the Kripke semantics modeling for the card game example (Example \ref{ex:cardgame1}) lines up exactly with our informal interpretation. There is, however, one aspect of the modeling we glossed over and that is the implementation of the rules of the game. Obviously, we want to show that we can model this example in our base-extension semantics and get the same intuitive results as the Kripke semantics. Given the equivalence between maximally-consistent bases and worlds in Kripke semantics, it should come as no surprise that simply taking maximally-consistent bases that correspond to the worlds used in the Kripke models works. 

However, we can also give a more satisfying treatment of the rules of the card game, as base rules directly embedded into our bases. For this reason we want to construct our bases with as few base rules as possible resulting in a minimal model, in the sense given above. This will allow us to make the stronger claim that the necessary reasoning by the agents will work on any model for which these minimal restrictions hold.

As discussed above the basic rules of the card game are that a) every card is held by exactly one agent and b) every agent holds exactly one card. We start by constructing a base at which `at most' versions of a) and b) hold. This base will be used as a subset for our final bases.

What we mean by `every card is held by at most one agent' is that any base the players are considering at which a card is held by more than one agent must be inconsistent. So, for example, we want $(0_a \wedge 0_b) \to \bot$ to hold at our base. Similarly, for `every agent holds at most one card', we require $(0_a \wedge 1_a)\to\bot$. So we want any combination of two agents holding the same cards and two cards being held by the same agent to lead to contradiction, that is to say cause every propositional atom to hold. We can straightforwardly create rule-schema that cover all those cases. We call the base for which this holds $\mathscr{B}_{cards}$ and give a formal definition as part of Proposition \ref{prop:cardgame}.

We cannot give similar base rules to establish the `at least' parts of a) and b) directly. Given the form of our base rules, as given in Definition \ref{ClassicalRule}, it is not possible to give rules to that effect. However, similarly to the Kripke semantics version, we take bases at which the agents have each drawn a card and each card has been drawn. 


There is one more `rule' of the card game we have not yet discussed: Every player can only see their own hand. This is a modal rule that directly impacts what information the players can perceive. As such, this is implemented by our choice of $\mathfrak{R}_A$. Similarly to the Kripke semantics treatment, we choose an $\mathfrak{R}_a$ at which Anne only considers bases at which they drew the same card. The same goes for Bob and Cath.

We do not claim that all rules for any game that can be modelled using PAL can be implemented either as a base rule or as a modal rule in constructing the relevant $\mathfrak{R}_A$, as we have seen with the `at most' versions of a) and b). We do however, argue that in cases in which we can this provides a more faithful representation of the example as compared to the abstract choice of valuations in Kripke semantics.


 

    


\medskip 

\begin{prop}
\label{prop:cardgame}

     For all $M, N, \in \{0,1,2\}$ with $M\neq N$, $i, j \in \{a, b, c\}$ with $i\neq j$, and $p$, 
 
 \begin{center}
     $\mathscr{B}_{cards} = \{(N_i, N_j \rightarrow p), (N_i, M_i \rightarrow p)\}$.
 \end{center}
    Let $\mathscr{B}_{MNP}$ be any consistent superset of $\mathscr{B}_{cards}$ s.t. $\Vdash_{\mathscr{B}_{MNP}} M_a \wedge N_b \wedge P_c$.
   For any update set $R_{A}$ with $A=\{a,b,c\}$ s.t. there are 
\begin{center}   $\mathscr{B}_{012}, \mathscr{B}_{021}, \mathscr{B}_{102}, \mathscr{B}_{120}, \mathscr{B}_{201}$ and $\mathscr{B}_{210}$ 
\end{center}

\noindent with 

    \begin{center}
\begin{tabular}{lcr}
   $\mathfrak{R}_a \mathscr{B}_{012} \mathscr{B}_{021}$,  & $\mathfrak{R}_a \mathscr{B}_{102} \mathscr{B}_{120}$, & $\mathfrak{R}_a \mathscr{B}_{201} \mathscr{B}_{210}$, \\
       $\mathfrak{R}_b \mathscr{B}_{102} \mathscr{B}_{201}$, & $\mathfrak{R}_b \mathscr{B}_{012} \mathscr{B}_{210}$, & $\mathfrak{R}_b \mathscr{B}_{021} \mathscr{B}_{120}$,\\
$\mathfrak{R}_c \mathscr{B}_{120} \mathscr{B}_{210}$, & $\mathfrak{R}_c \mathscr{B}_{021} \mathscr{B}_{201}$, & $\mathfrak{R}_c \mathscr{B}_{012} \mathscr{B}_{102}$,
\end{tabular}
\end{center}
and for which these, together with the necessary relations to make these $S5$ (i.e., reflexive, transitive, and Euclidean), are the only relations involving these bases, 
  \begin{center}
      $\Vdash_{\mathscr{B}_{012}, \mathfrak{R}_{A}} [\neg 1_a] K_c(0_a\wedge 1_b\wedge 2_c)$. 
  \end{center}
\end{prop}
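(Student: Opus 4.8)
The plan is to peel off the two box-like operators using Definition~\ref{PALValidity} and thereby reduce everything to a single claim about which bases Cath's updated relation can reach. By the clause for $[\cdot]$, it suffices to fix an arbitrary $\mathscr{C}\supseteq\mathscr{B}_{012}$ with $\Vdash_{\mathscr{C},\mathfrak{R}_A}\neg 1_a$ and show $\Vdash^{\neg 1_a}_{\mathscr{C},\mathfrak{R}_A}K_c(0_a\wedge 1_b\wedge 2_c)$; by the non-empty-$\Delta$ clause for $K_c$, this amounts to showing that for every $\mathscr{E}\supseteq\mathscr{C}$, every effective update $\mathfrak{R}'_A\in\mathfrak{R}_A|(\neg 1_a)^{\mathscr{E}}$, and every $\mathscr{E}'$ with $\mathfrak{R}'_c\mathscr{E}\mathscr{E}'$, we have $\Vdash^{\neg 1_a}_{\mathscr{E}',\mathfrak{R}_A}(0_a\wedge 1_b\wedge 2_c)$. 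Since whether $0_a\wedge 1_b\wedge 2_c$ is supported depends only on $\overline{\mathscr{E}'}$ (the atomic, $\bot$ and $\to$ clauses are insensitive to $\mathfrak{R}_A$ and to $\Delta$), and $\mathscr{B}_{012}\subseteq\mathscr{E}'$ would give $0_a,1_b,2_c\in\overline{\mathscr{E}'}$, the entire problem collapses to: \emph{every $\mathscr{E}'$ with $\mathfrak{R}'_c\mathscr{E}\mathscr{E}'$ is either inconsistent or a superset of $\mathscr{B}_{012}$}. Inconsistent $\mathscr{E}'$ are handled by Lemma~\ref{lem:PALEFQ}; and if $\mathscr{E}$ (equivalently $\mathscr{C}$) is itself inconsistent the goal already holds there by Lemma~\ref{lem:PALEFQ}, so we may assume $\mathscr{E}$ consistent.

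Two auxiliary facts drive the argument. First, the rule-schema in $\mathscr{B}_{cards}$ --- in particular the instance $1_a,0_a\Rightarrow p$ --- forces any consistent $\mathscr{D}\supseteq\mathscr{B}_{012}$ to satisfy $\Vdash_{\mathscr{D},\mathfrak{R}_A}\neg 1_a$ (since $0_a\in\overline{\mathscr{D}}$, any extension of $\mathscr{D}$ deriving $1_a$ is inconsistent) and $\Vdash_{\mathscr{D},\mathfrak{R}_A}(0_a\wedge 1_b\wedge 2_c)$; dually, any consistent base containing $\mathscr{B}_{102}$ supports $1_a$ and hence, being consistent, does \emph{not} support $\neg 1_a$. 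In particular $\Vdash_{\mathscr{E},\mathfrak{R}_A}\neg 1_a$, so by the corollary following Lemma~\ref{lem:effective} the set $\mathfrak{R}_A|(\neg 1_a)^{\mathscr{E}}$ is non-empty, each of its members being an $S5$-modal relation (Lemma~\ref{PALrelationIsModal}). Second, a structural fact about $\mathfrak{R}_c$: because the listed relations together with their reflexive/symmetric/transitive closure are the \emph{only} relations of $\mathfrak{R}_c$ involving the six designated bases, and $\mathfrak{R}_c$ must satisfy conditions (c)--(d) of Definition~\ref{modalRelation}, every \emph{consistent} base $\mathfrak{R}_c$-reachable in one step from a superset of $\mathscr{B}_{012}$ has, by condition (d), a subset $\mathfrak{R}_c$-related to $\mathscr{B}_{012}$, and that subset must be $\mathscr{B}_{012}$ or $\mathscr{B}_{102}$; hence every such base is a superset of $\mathscr{B}_{012}$ or of $\mathscr{B}_{102}$.

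Combining these: let $\mathfrak{R}'_c\mathscr{E}\mathscr{E}'$ with $\mathscr{E}'$ consistent. Applying condition (d) to the $S5$-modal relation $\mathfrak{R}'_c$ with $\mathscr{B}_{012}\subseteq\mathscr{E}$ gives a consistent $\mathscr{E}''\subseteq\mathscr{E}'$ with $\mathfrak{R}'_c\mathscr{B}_{012}\mathscr{E}''$; since $\mathfrak{R}'_c$ is an effective update of $\mathfrak{R}_A$ by $\neg 1_a$ at $\mathscr{E}$ (built as in Definition~\ref{def:UpdateConstruction}), this forces $\mathfrak{R}_c\mathscr{B}_{012}\mathscr{E}''$ together with $\Vdash_{\mathscr{E}'',\mathfrak{R}_A}\neg 1_a$. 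By the ``only relations'' clause, $\mathscr{E}''\in\{\mathscr{B}_{012},\mathscr{B}_{102}\}$, and $\mathscr{E}''=\mathscr{B}_{102}$ is impossible because $\mathscr{B}_{102}$ is consistent and supports $1_a$, so cannot support $\neg 1_a$. Hence $\mathscr{E}''=\mathscr{B}_{012}\subseteq\mathscr{E}'$, i.e.\ $\mathscr{E}'\supseteq\mathscr{B}_{012}$, which is exactly the claim. This yields $\Vdash_{\mathscr{B}_{012},\mathfrak{R}_A}[\neg 1_a]K_c(0_a\wedge 1_b\wedge 2_c)$.

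The main obstacle is the step linking an arbitrary effective update $\mathfrak{R}'_c$ back to $\mathfrak{R}_c$: Definition~\ref{def:effective} pins $\mathfrak{R}'_c$ down only on bases \emph{reachable by $\mathfrak{R}_A$ from $\mathscr{E}$}, whereas $\mathscr{B}_{012}$ and the witnessing $\mathscr{E}''$ need not be reachable. Handling this requires unwinding Definition~\ref{def:UpdateConstruction} in this concrete case --- namely, that the only relations its Steps~3--4 add involving non-reachable bases are between sub- and supersets of reachable, $\neg 1_a$-supporting bases, and that these additions preserve the invariant ``every consistent successor of a superset of $\mathscr{B}_{012}$ still supports $\neg 1_a$ and lies above $\mathscr{B}_{012}$'' --- essentially a specialization of the verification already carried out abstractly in the proof of Lemma~\ref{PALrelationIsModal}. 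A secondary, routine obstacle is the bookkeeping that reduces $\Vdash^{\neg 1_a}$ on the conjunction $0_a\wedge 1_b\wedge 2_c$ to its atomic conjuncts, via the $\to$/$\bot$ clauses of Definition~\ref{PALValidity} together with Lemmas~\ref{lem:ClassicalAxioms} and \ref{PALBehaviour}.
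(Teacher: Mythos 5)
Your proposal follows essentially the same route as the paper's own argument: unfold the announcement clause of Definition~\ref{PALValidity}, use condition (d) of Definition~\ref{modalRelation} together with the hypothesis that the listed relations (plus $S5$ closure) are the only ones involving the six designated bases to conclude that any base Cath can reach from a superset of $\mathscr{B}_{012}$ lies above $\mathscr{B}_{012}$ or above $\mathscr{B}_{102}$, and then use effectiveness of the update by $\neg 1_a$ to discard the $\mathscr{B}_{102}$ side, so that $0_a\wedge 1_b\wedge 2_c$ holds at every remaining successor. You are in fact more explicit than the paper: you spell out the non-empty-$\Delta$ clause for $K_c$ (quantifying over supersets $\mathscr{E}$ and over all effective updates at $\mathscr{E}$), and you flag that Definition~\ref{def:effective} pins an effective update down only on bases reachable from $\mathscr{E}$ --- a point the paper's proof passes over by working only with $\mathfrak{R}_c$-successors of $\mathscr{C}_{012}$, where the effectiveness biconditional applies directly. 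Two cautions about how you bridge that point: Lemma~\ref{PALrelationIsModal} shows that the particular relation $R_a|\phi^{\mathscr{B}}$ of Definition~\ref{def:UpdateConstruction} is an $S5$-modal relation, not that every member of $\mathfrak{R}_A|(\neg 1_a)^{\mathscr{E}}$ is; and your step ``since $\mathfrak{R}'_c$ is \ldots built as in Definition~\ref{def:UpdateConstruction}, this forces $\mathfrak{R}_c\mathscr{B}_{012}\mathscr{E}''$'' treats an arbitrary effective update as if it were the constructed one, which Definition~\ref{def:effective} does not license --- and, as you concede in your closing paragraph, that verification is left unfinished. Since the paper's own proof does not engage with this subtlety at all, your argument matches it on the main line and is, at the level of rigour the paper itself adopts, a correct rendering of the same proof.
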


Note that besides building the subset of the bases we require to model the card game example, $\mathscr{B}_{cards}$ also represents the state of the game before any card has been drawn.

By Definition \ref{PALValidity}, $\Vdash_{\mathscr{B}_{012}, \mathfrak{R}_{A}} [\neg 1_a] K_c(0_a\wedge 1_b\wedge 2_c)$ iff for all $\mathscr{C}_{012}\supseteq\mathscr{B}_{012}$, if $\Vdash_{\mathscr{C}_{012}, \mathfrak{R}_{A}} \neg 1_a$, then $\Vdash_{\mathscr{C}_{012}^+, \mathfrak{R}_{A}}^{\neg 1_a} K_c (0_a\wedge 1_b \wedge 2_c)$. As $\mathfrak{R}_c$ is a modal relation we know that any $\mathscr{D}$, s.t. $\mathfrak{R}_c \mathscr{C}_{012} \mathscr{D}$, has to be a superset of $\mathscr{B}_{012}$ or $\mathscr{B}_{102}$, by condition (d) of Definition \ref{modalRelation}. We call $\mathscr{D}_{012}$ those $\mathscr{D}\supseteq\mathscr{B}_{012}$  and $\mathscr{D}_{102}$ those $\mathscr{D}\supseteq\mathscr{B}_{102}$. We cannot have any $\mathfrak{R}'_A \in \mathfrak{R}_A|1_a^{\mathscr{C}_{012}}$ with $\mathfrak{R}'_c\mathscr{C}^+_{012}\mathscr{D}^+_{102}$, as $\nVdash_{\mathscr{D}_{102}, \mathfrak{R}_{A}} \neg 1_a$. So we only need to consider $\mathscr{D}_{012}$. As $\mathscr{D}_{012} \supseteq\mathscr{B}_{012}$, we have $\Vdash_{\mathscr{D}^+_{012}, \mathfrak{R}_{A}}^{1_a} 0_a \wedge 1_b \wedge 2_c$ and so also $\vdash_{\mathscr{B}_{012}, \mathfrak{R}_{A}} [\neg 1_a] K_c(0_a\wedge 1_b\wedge 2_c)$. So, the example plays out in exactly the same way as we expected it to.


  In fact,  the basic structure of the model remains largely unchanged from the Kripke semantics version illustrated in Figure \ref{KripkeCardgame1}. But  now we have to consider supersets and sometimes multiple such supersets for the same minimal base (e.g., $\mathscr{B}_{012}$). However, due to condition (d) of modal relations, we know that no other bases are reachable by $\mathfrak{R}_{A}$ and so the example plays out in the same way. We illustrate this in Figure \ref{BeSCardgame}.

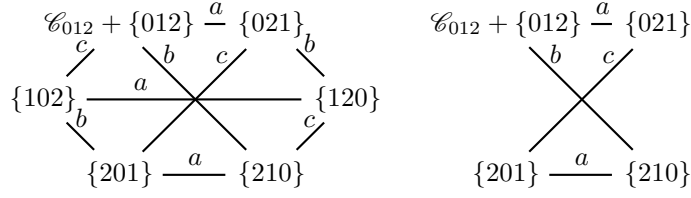
\begin{figure}[ht]
\begin{center}
\begin{tabular}{cc}

\begin{tikzpicture}[-,shorten >=1pt,node distance =2cm, thick, baseline={([yshift=-1.8ex]current bounding box.center)}] 

\node (A) at (0,1) {$\{102\}$};
\node (B) at (1,0) {$\{201\}$};
\node (C) at (1,2) {$\mathscr{C}_{012} , \{012\}$};
\node (D) at (4,1) {$\{120\}$};
\node (E) at (3,0) {$\{210\}$};
\node (F) at (3,2) {$\{021\}$};

\draw[-,near start, above] (A) to node {$a$} (D);
\draw[-,above] (B) to node {$a$} (E);
\draw[-,above] (C) to node {$a$} (F);

\draw[-,above] (A) to node {$b$} (B);
\draw[-,near start, above] (C) to node {$b$} (E);
\draw[-,above] (D) to node {$b$} (F);

\draw[-,above] (A) to node {$c$} (C);
\draw[-,near end, above] (B) to node {$c$} (F);
\draw[-,above] (D) to node {$c$} (E);

\end{tikzpicture}

&

\begin{tikzpicture}[-,shorten >=1pt,node distance =2cm, thick, baseline={([yshift=-1.8ex]current bounding box.center)}] 

\node (B) at (1,0) {$\{201\}^+$};
\node (C) at (1,2) {$\mathscr{C}^+_{012} , \{012\}^+$};
\node (E) at (3,0) {$\{210\}^+$};
\node (F) at (3,2) {$\{021\}^+$};

\draw[-,above] (B) to node {$a$} (E);
\draw[-,above] (C) to node {$a$} (F);

\draw[-,near start, above] (C) to node {$b$} (E);

\draw[-,near end, above] (B) to node {$c$} (F);

\end{tikzpicture}

\end{tabular}

\caption{\label{BeSCardgame} The structure of $\mathfrak{R}_{A}^{\mathscr{C}_{012}}$ (left) and any $\mathfrak{R}'_A \in \mathfrak{R}_{A}|\neg 1_a^{\mathscr{C}_{012}}$ (right) for bases $\mathscr{C}_{012}\supseteq\mathscr{B}_{012}$ with $\{012\}$ standing for supersets of $\mathscr{B}_{012}$ and $\{012\}^+$ for their updates etc.}
\end{center}
\end{figure}

\subsection{The Muddy Children Puzzle}

We model the muddy children puzzle (Example \ref{ex:muddychildren}) for three children in which two, Anne and Bob, are muddy in the base-extension semantics for PAL. As with the card game example, we want to give a minimal example. Unlike the card game, however, we do not have any underlying rules of the example that can be implemented into base rules of our example. We simply have one base for each of the possible combinations of muddy children. The fact that the children cannot see their own forehead is, like the players only being able to see their own card, expressed by the corresponding model relations between those bases.

We use the idea of using minimal bases to once again show the necessary information to get the wanted result. The question we want to investigate is exactly what base rules are necessary for the muddy children to be able to draw the expected conclusions about which of them are, in fact, muddy. For bases in which a specific child, say Anne, is muddy it turns out that it is not enough to for `Anne is muddy' to not be supported, but rather we require for Anne to not be muddy (i.e., the negation has to be supported). We show this by showing that Proposition \ref{prop:MuddyChildren1}, in which we do not make this requirement, fails.




\medskip 

\begin{prop}
\label{prop:MuddyChildren1}
   For a set of agents $A = \{a,b,c\}$ and any $N \in P(A)$ (i.e., the powerset of $A$), let $\mathscr{B}_N$ be any consistent base such that for all $j\in A$ with $m_j \in N$,  $\Vdash_{\mathscr{B}_{N}, \mathfrak{R}_{A}} m_j$.

   For any update set $R_{A}$ with $A=\{a,b,c\}$ s.t. there are
   
   $\mathscr{B}_\emptyset, \mathscr{B}_{\{a\}}, \mathscr{B}_{\{b\}}, \mathscr{B}_{\{c\}}, \mathscr{B}_{\{a,b\}}, \mathscr{B}_{\{a,c\}}, \mathscr{B}_{\{b,c\}}$ and $\mathscr{B}_{\{a,b,c\}}$ with 
\begin{center}
    
\begin{tabular}{lccr}
   $\mathfrak{R}_a \mathscr{B}_{\emptyset} \mathscr{B}_{\{a\}}$,  & $\mathfrak{R}_a \mathscr{B}_{\{b\}} \mathscr{B}_{\{a,b\}}$, & $\mathfrak{R}_a \mathscr{B}_{\{c\}} \mathscr{B}_{\{a,c\}}$, & $\mathfrak{R}_a \mathscr{B}_{\{b,c\}} \mathscr{B}_{\{a,b,c\}}$,  \\
   
       $\mathfrak{R}_b \mathscr{B}_{\emptyset} \mathscr{B}_{\{b\}}$, & $\mathfrak{R}_b \mathscr{B}_{\{a\}} \mathscr{B}_{\{a,b\}}$, & $\mathfrak{R}_b \mathscr{B}_{\{c\}} \mathscr{B}_{\{b,c\}}$, & $\mathfrak{R}_b \mathscr{B}_{\{a, c\}} \mathscr{B}_{\{a, b,c\}}$, \\

$\mathfrak{R}_c \mathscr{B}_{\emptyset} \mathscr{B}_{\{c\}}$, & $\mathfrak{R}_c \mathscr{B}_{\{a\}} \mathscr{B}_{\{a,c\}}$, & $\mathfrak{R}_c \mathscr{B}_{\{b\}} \mathscr{B}_{\{b,c\}}$, & $\mathfrak{R}_c \mathscr{B}_{\{a,b\}} \mathscr{B}_{\{a,b,c\}}$,\\
     
\end{tabular}
   
\end{center}
and for which those, together with the necessary relations to make these $S5$ (i.e., reflexive, transitive, and Euclidean), are the only relations involving these bases, we have 

  \begin{center}
      $\Vdash_{\mathscr{B}_{\{a,b\}}, \mathfrak{R}_{A}} [(m_a\vee m_b\vee m_c)] [\neg (K_a m_a\vee K_a \neg m_a) \wedge \neg (K_b m_b\vee K_b \neg m_b) \wedge \neg (K_c m_c\vee K_c \neg m_c)] (K_a m_a \wedge K_b m_b) $. 
      
  \end{center}

\end{prop}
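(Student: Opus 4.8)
The statement as written is actually \emph{false}, so the proof I would give is a refutation: I would exhibit bases and modal relations satisfying every hypothesis of the proposition for which the displayed support relation fails. The slack to exploit is that the hypothesis pins down only the \emph{positive} facts $\Vdash_{\mathscr{B}_N,\mathfrak{R}_A} m_j$ for $m_j\in N$ and imposes nothing about $\neg m_j$ when $m_j\notin N$; in particular it does not force $\mathscr{B}_\emptyset$ to refute the father's announcement $\phi:=m_a\vee m_b\vee m_c$.

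Concretely, I would take all eight bases to be maximally-consistent with $\overline{\mathscr{B}_N}\cap\{m_a,m_b,m_c\}=\{m_j:j\in N\}$ for $N\neq\emptyset$ --- the standard ``worlds'' --- but choose $\mathscr{B}_\emptyset$ maximally-consistent with $m_a\in\overline{\mathscr{B}_\emptyset}$ and $m_b,m_c\notin\overline{\mathscr{B}_\emptyset}$, that is, having the valuation of the ``only Anne muddy'' world. This is permitted, since $\emptyset$ imposes no constraint and the $\mathscr{B}_N$ need only be distinct consistent bases. I would then let $\mathfrak{R}_A$ be the $S5$-closure of the listed edges on these eight bases, extended to all of $\Omega$ in the standard way, as is possible for the $S5$ base-extension semantics (Theorem~\ref{def:S5completeness}) and as in the treatment of Proposition~\ref{prop:cardgame}.

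The core of the argument is then as follows. Because $m_a\in\overline{\mathscr{B}_\emptyset}$ we have $\Vdash_{\mathscr{B}_\emptyset,\mathfrak{R}_A}\phi$, so $\mathscr{B}_\emptyset$ is \emph{not} deleted by the father's announcement. Hence at $\mathscr{B}_{\{b\}}$, after $\phi$, agent $b$ still sees $\mathscr{B}_\emptyset$ (via $\mathfrak{R}_b\mathscr{B}_\emptyset\mathscr{B}_{\{b\}}$), where $m_b$ fails, so neither $K_b m_b$ nor $K_b\neg m_b$ is supported at $\mathscr{B}_{\{b\}}$ after $\phi$; using Lemma~\ref{PALBehaviour} to collapse the post-announcement $K$-clauses to their Kripke readings at maximally-consistent bases, the other conjuncts check out too, so $\Vdash^{\phi}_{\mathscr{B}_{\{b\}},\mathfrak{R}_A}\psi$ and $\mathscr{B}_{\{b\}}$ \emph{survives} the second announcement as well. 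The same bookkeeping (here $b$ sees $\mathscr{B}_{\{a\}}$, $c$ sees $\mathscr{B}_{\{a,b,c\}}$) gives $\Vdash^{\phi}_{\mathscr{B}_{\{a,b\}},\mathfrak{R}_A}\psi$. Thus $\phi\wedge[\phi]\psi$ holds at both $\mathscr{B}_{\{a,b\}}$ and $\mathscr{B}_{\{b\}}$; by Lemma~\ref{CompR} the effective updates by $\phi;\psi$ coincide with those by $\phi\wedge[\phi]\psi$, and the construction of Definition~\ref{def:UpdateConstruction} retains the $\mathfrak{R}_a$-edge between $\mathscr{B}_{\{a,b\}}$ and $\mathscr{B}_{\{b\}}$ in some $\mathfrak{R}'_A\in\mathfrak{R}_A|(\phi;\psi)^{\mathscr{B}_{\{a,b\}}}$ (both are reachable from $\mathscr{B}_{\{a,b\}}$, $\phi\wedge[\phi]\psi$ holds at both, and steps 2--4 leave edges between the named bases untouched). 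Since $m_a\notin\overline{\mathscr{B}_{\{b\}}}$, this witnesses $\nVdash^{\phi,\psi}_{\mathscr{B}_{\{a,b\}},\mathfrak{R}_A}K_a m_a$; taking $\mathscr{C}=\mathscr{D}=\mathscr{B}_{\{a,b\}}$ (each supports $\phi$, resp.\ $\psi$ after $\phi$) in the unfolding of $[\phi][\psi](K_am_a\wedge K_bm_b)$ finally yields $\nVdash_{\mathscr{B}_{\{a,b\}},\mathfrak{R}_A}[\phi][\psi](K_am_a\wedge K_bm_b)$, which refutes the proposition.

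The hard part will be the careful verification that $\psi$ is \emph{genuinely} supported after $\phi$ at both $\mathscr{B}_{\{b\}}$ and $\mathscr{B}_{\{a,b\}}$: the post-announcement clause for $K_a$ quantifies over all supersets \emph{and} all effective updates, so one must argue that, at the maximally-consistent bases in play, these quantifiers reduce to the expected Kripke-style condition over the surviving edges, and that conditions (c) and (d) of Definition~\ref{modalRelation} can still be met on the unnamed sub- and supersets after extending $\mathfrak{R}_A$ to $\Omega$. A subsidiary point not to overlook is that $\psi$ must survive at $\mathscr{B}_{\{a,b\}}$ --- otherwise $[\psi]$ is vacuous and the conclusion would hold trivially --- which is precisely why $\mathscr{B}_\emptyset$ is chosen to support $m_a$ but neither $m_b$ nor $m_c$.
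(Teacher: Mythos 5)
Your reading is the intended one: the paper itself does not prove Proposition~\ref{prop:MuddyChildren1} but refutes it, using exactly your device of making $\mathscr{B}_\emptyset$ support $m_a$ (there via the rule $p_\emptyset \Rightarrow m_a$), so that the father's announcement of $m_a\vee m_b\vee m_c$ fails to eliminate $\mathscr{B}_\emptyset$, the base $\mathscr{B}_{\{b\}}$ consequently survives the second announcement, and the surviving $\mathfrak{R}_a$-edge from $\mathscr{B}_{\{a,b\}}$ to $\mathscr{B}_{\{b\}}$ defeats $K_a m_a$ — the same witness and mechanism as in your argument. Your only departure is packaging the counterexample in maximally-consistent bases (leaning on Lemma~\ref{PALBehaviour} and Lemma~\ref{CompR}) where the paper instead uses small bases tagged with fresh atoms $p_N$ to sidestep subset/superset interactions; this is a cosmetic difference of bookkeeping, not a different argument.
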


To show that Proposition \ref{prop:MuddyChildren1} fails, we give a counterexample. To make things simpler and avoid any considerations of the sub- and superset relations, for every $\mathscr{B}_N$, let $p_{N}$ be a propositional atom s.t. the base rule $(\Rightarrow p_N)\in \mathscr{B}_N$ and, for all $\mathscr{B}_M$ s.t. $M\neq N$, $(\Rightarrow p_N)\notin \mathscr{B}_M$. To further keep the example as simple as possible, we add simply the rules corresponding to $(\Rightarrow m_i)$ for all $i\in N$ to a base $\mathscr{B}_N$. So, for example, $\mathscr{B}_{\{a,b\}} = \{ (\Rightarrow p_{\{a,b\}}), (\Rightarrow m_a), (\Rightarrow m_b)\}$. We also still let $\phi = (m_a\vee m_b\vee m_c)$ and $\psi= \neg (K_a m_a\vee K_a \neg m_a) \wedge \neg (K_b m_b\vee K_b \neg m_b) \wedge \neg (K_c m_c\vee K_c \neg m_c)$.

For $\mathscr{B}_\emptyset$, we are taking something a bit different. Remember that the conditions above are minimal conditions and so we can add base rules to our bases unless they contradict them directly. So let $\mathscr{B}_\emptyset = \{(\Rightarrow p_\emptyset), (p_\emptyset\Rightarrow m_a)\}$. 

Given these bases it is easy to show that the example will not play out in the way we expect it to. Note that now $\Vdash_{\mathscr{B}_\emptyset, \mathfrak{R}_A} m_a$ and so the father's initial announcement of $\phi= m_a \vee m_b \vee m_c$ --- that at least one child is muddy --- fails to eliminate the base $\mathscr{B}_\emptyset$ (see Figure \ref{B-eSMC1}).

\begin{center}
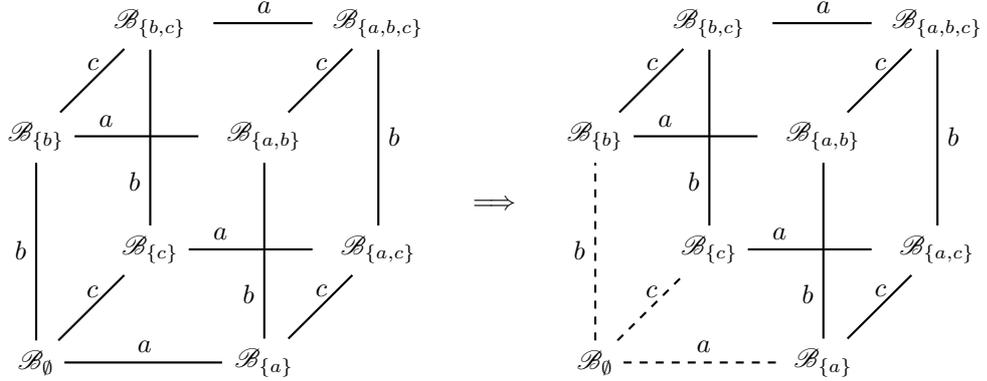
\begin{figure}
\begin{tabular}{c p{.7cm} c}
\begin{tikzpicture}[-,shorten >=1pt,node distance =1cm, thick, baseline={([yshift=-1.8ex]current bounding box.center)}] 

\node (A) at (0,0) {$\mathscr{B}_\emptyset$};

\node (C) at (0,3) {$\mathscr{B}_{\{b\}}$};
\node (D) at (1.5,1.5) {$\mathscr{B}_{\{c\}}$};
\node (B) at (3,0) {$\mathscr{B}_{\{a\}}$};

\node[text width=1.4cm, text centered] (E) at (3,3) {$\mathscr{B}_{\{a,b\}}$};
\node[text width=1.4cm, text centered] (F) at (4.5,1.5) {$\mathscr{B}_{\{a,c\}}$};
\node[text width=1.4cm, text centered] (G) at (1.5,4.5) {$\mathscr{B}_{\{b,c\}}$};

\node[text width=1.4cm, text centered] (H) at (4.5,4.5) {$\mathscr{B}_{\{a,b,c\}}$};

\draw[-,above] (A) to node {$a$} (B);
\draw[-,left] (A) to node {$b$} (C);
\draw[-,above] (A) to node {$c$} (D);

\draw[-, near start,left] (B) to node {$b$} (E);
\draw[-,above] (B) to node {$c$} (F);

\draw[-,near start, above] (C) to node {$a$} (E);
\draw[-,above] (C) to node {$c$} (G);

\draw[-,near start, above] (D) to node {$a$} (F);
\draw[-,near start, left] (D) to node {$b$} (G);

\draw[-,above] (E) to node {$c$} (H);
\draw[-,right] (F) to node {$b$} (H);
\draw[-,above] (G) to node {$a$} (H);

\end{tikzpicture}

&

$\Longrightarrow$

&

\begin{tikzpicture}[-,shorten >=1pt,node distance =1cm, thick, baseline={([yshift=-1.8ex]current bounding box.center)}] 

\node (A) at (0,0) {$\mathscr{B}_\emptyset^+$};

\node (C) at (0,3) {$\mathscr{B}_{\{b\}}^+$};
\node (D) at (1.5,1.5) {$\mathscr{B}_{\{c\}}^+$};
\node (B) at (3,0) {$\mathscr{B}_{\{a\}}^+$};

\node[text width=1.4cm, text centered] (E) at (3,3) {$\mathscr{B}_{\{a,b\}}^+$};
\node[text width=1.4cm, text centered] (F) at (4.5,1.5) {$\mathscr{B}_{\{a,c\}}^+$};
\node[text width=1.4cm, text centered] (G) at (1.5,4.5) {$\mathscr{B}_{\{b,c\}}^+$};

\node[text width=1.4cm, text centered] (H) at (4.5,4.5) {$\mathscr{B}_{\{a,b,c\}}^+$};

\draw[dashed,above] (A) to node {$a$} (B);
\draw[dashed,left] (A) to node {$b$} (C);
\draw[dashed,above] (A) to node {$c$} (D);

\draw[-, near start,left] (B) to node {$b$} (E);
\draw[-,above] (B) to node {$c$} (F);

\draw[-,near start, above] (C) to node {$a$} (E);
\draw[-,above] (C) to node {$c$} (G);

\draw[-,near start, above] (D) to node {$a$} (F);
\draw[-,near start, left] (D) to node {$b$} (G);

\draw[-,above] (E) to node {$c$} (H);
\draw[-,right] (F) to node {$b$} (H);
\draw[-,above] (G) to node {$a$} (H);

\end{tikzpicture}
\end{tabular}
\caption{\label{B-eSMC1}$S^*_A$ on the left and $S^*_A|\phi^{\mathscr{B}_{\{a,b\}}}$ (Definition \ref{def:UpdateS}) on the right. The announcement of $\phi$ fails to eliminate $\mathscr{B}_\emptyset$ and the relations to it (highlighted by dashed lines).}

\end{figure}    
\end{center}

It should already be clear that the modelling of the example will fail, however it is useful to see how exactly this failure is going to impact the result.

Normally, we should expect the announcement of $\psi$ to get rid of the bases $\mathscr{B}_{\{a\}}^+, \mathscr{B}_{\{b\}}^+,$ and $\mathscr{B}_{\{c\}}^+$, because if at least one child is muddy, in these bases the sole muddy child would realize that it is muddy. How does that change with $\mathscr{B}^+_\emptyset$ still around? Note that at the bases $\mathscr{B}^+_\emptyset$ and $\mathscr{B}^+_{\{a\}}$, Anne only has access to bases in which $m_a$ is supported. By Lemma \ref{PALMonotonicity}, we know that this will also hold for all superset bases. So $\Vdash^\phi_{\mathscr{B}_\emptyset, \mathfrak{R}_A} K_a m_a$ and $\Vdash^\phi_{\mathscr{B}_{\{a\}}, \mathfrak{R}_A} K_a m_a$. This means that $\psi$ fails at these bases. At all other bases, however, the children do not yet know whether they are muddy and so $\psi$ continues to hold. So, the announcement of $\psi$ eliminates $\mathscr{B}^+_\emptyset$ and $\mathscr{B}^+_{\{a\}}$ and leaves us with the model in Figure \ref{B-eSMC2}.

\begin{center}
\begin{figure}
\begin{tabular}{c p{.7cm} c}
\begin{tikzpicture}[-,shorten >=1pt,node distance =1cm, thick, baseline={([yshift=-1.8ex]current bounding box.center)}] 

\node (A) at (0,0) {$\mathscr{B}^+_\emptyset$};

\node (C) at (0,3) {$\mathscr{B}^+_{\{b\}}$};
\node (D) at (1.5,1.5) {$\mathscr{B}^+_{\{c\}}$};
\node (B) at (3,0) {$\mathscr{B}^+_{\{a\}}$};

\node[text width=1.4cm, text centered] (E) at (3,3) {$\mathscr{B}_{\{a,b\}}^+$};
\node[text width=1.4cm, text centered] (F) at (4.5,1.5) {$\mathscr{B}_{\{a,c\}}^+$};
\node[text width=1.4cm, text centered] (G) at (1.5,4.5) {$\mathscr{B}_{\{b,c\}}^+$};

\node[text width=1.4cm, text centered] (H) at (4.5,4.5) {$\mathscr{B}_{\{a,b,c\}}^+$};

\draw[-,above] (A) to node {$a$} (B);
\draw[-,left] (A) to node {$b$} (C);
\draw[-,above] (A) to node {$c$} (D);

\draw[-, near start,left] (B) to node {$b$} (E);
\draw[-,above] (B) to node {$c$} (F);

\draw[-,near start, above] (C) to node {$a$} (E);
\draw[-,above] (C) to node {$c$} (G);

\draw[-,near start, above] (D) to node {$a$} (F);
\draw[-,near start, left] (D) to node {$b$} (G);

\draw[-,above] (E) to node {$c$} (H);
\draw[-,right] (F) to node {$b$} (H);
\draw[-,above] (G) to node {$a$} (H);

\end{tikzpicture}

&

$\Longrightarrow$

&

\begin{tikzpicture}[-,shorten >=1pt,node distance =1cm, thick, baseline={([yshift=-1.8ex]current bounding box.center)}] 


\node (C) at (0,3) {$\mathscr{B}_{\{b\}}^{++}$};
\node (D) at (1.5,1.5) {$\mathscr{B}_{\{c\}}^{++}$};

\node[text width=1.4cm, text centered] (E) at (3,3) {$\mathscr{B}_{\{a,b\}}^{++}$};
\node[text width=1.4cm, text centered] (F) at (4.5,1.5) {$\mathscr{B}_{\{a,c\}}^{++}$};
\node[text width=1.4cm, text centered] (G) at (1.5,4.5) {$\mathscr{B}_{\{b,c\}}^{++}$};

\node[text width=1.4cm, text centered] (H) at (4.5,4.5) {$\mathscr{B}_{\{a,b,c\}}^{++}$};



\draw[dashed,near start, above] (C) to node {$a$} (E);
\draw[dashed,above] (C) to node {$c$} (G);

\draw[dashed,near start, above] (D) to node {$a$} (F);
\draw[dashed,near start, left] (D) to node {$b$} (G);

\draw[-,above] (E) to node {$c$} (H);
\draw[-,right] (F) to node {$b$} (H);
\draw[-,above] (G) to node {$a$} (H);

\end{tikzpicture}
\end{tabular}
\caption{\label{B-eSMC2} After the announcement of $\psi$ (right side),  the bases $\mathscr{B}_{\{b\}}$ and $\mathscr{B}_{\{c\}}$ fail to be eliminated (with the corresponding relations highlighted by dashed lines).}

\end{figure}
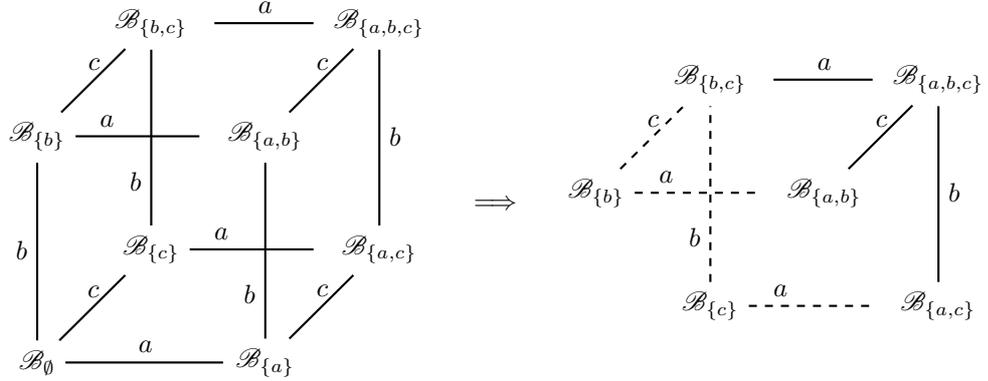    
\end{center}

Remember that we wanted to show that  $\Vdash_{\mathscr{B}_{\{a,b\}}, \mathfrak{R}_{A}} [\phi] [\psi] (K_a m_a \wedge K_b m_b) $. As it turns out  $\Vdash^{\phi,\psi}_{\mathscr{B}^{++}_{\{a,b\}}, \mathfrak{R}_{A}} K_b m_b$, but $\nVdash^{\phi,\psi}_{\mathscr{B}^{++}_{\{a,b\}}, \mathfrak{R}_{A}} K_a m_a$.

Interestingly, Anne being muddy at base $\mathscr{B}_\emptyset$ did not change Bob's ability to find out whether they are muddy or not. Similarly, Cath's reasoning remains largely unchanged in our example as well, even though they were unable to conclude whether they are muddy or not. After the announcements of $\phi$ and $\psi$, Cath still considers $\mathscr{B}_{\{a,b\}}$ and $\mathscr{B}_{\{a,b,c\}}$. This suggests that it is important for Anne that we require $\mathscr{B}_\emptyset$ to not support $m_a$. 

Given that we are trying to establish minimal conditions for these bases, there is an important difference between not having `Anne is muddy' and `Anne is not muddy'. In order for the announcement of $\phi$ to have its intended effect $\mathscr{B}_\emptyset$ has to be a base at which none of the children is muddy. Similarly, for all the bases that represent cases in which a child is not muddy it is not enough to not require the child to be muddy. Take a base $\mathscr{B}_{\{b\}}$ at which $m_a$ is supported. At the bases $\mathscr{B}_{\{b\}}$ and $\mathscr{B}_{\{a, b\}}$, Anne now only has access to bases at which they are muddy and so $K_a m_a$ will be supported even before an announcement even takes place and $\psi$ cannot be announced at $\mathscr{B}_{\{a, b\}}$\footnote{Even though this does not technically cause Proposition \ref{prop:MuddyChildren1} to fail, as $K_a m_a \wedge K_b m_b$ will vacuously hold when $\psi$ cannot be announced, it shows that the model does not behave as expected. Consider that $K_c m_c$ will also hold even though Cath is not muddy.}.

So, in order to give actual minimal conditions for bases corresponding to the muddy children puzzle, we update Proposition \ref{prop:MuddyChildren1} by requiring the children to be not muddy at the relevant bases.

\iffalse{
OLD 

Let $\emptyset$ be the base at which no child is muddy. The other bases are created by adding the rule $\rightarrow m_i$ for every child $i$ that is in fact muddy to $\emptyset$. The relations in $\mathfrak{R}_A$ are created as with the Kripke model version of the puzzle. This relevant fragment of the base-extension model is captured by Figure \ref{B-eSMC1}. We assume that the relations are modal relations as given in Definition \ref{modalRelation}. We still consider the actual base the one in which Anne and Bob are muddy and Cath is not.

    \begin{figure}[ht]
\begin{center}

\begin{tikzpicture}[-,shorten >=1pt,node distance =2cm, thick, baseline={([yshift=-1.8ex]current bounding box.center)}] 

\node (A) at (0,0) {$\emptyset$};

\node (C) at (0,3) {$\{\Rightarrow m_b\}$};
\node (D) at (1.5,1.5) {$\{\Rightarrow m_c\}$};
\node (B) at (3,0) {$\{\Rightarrow m_a\}$};

\node[text width=1.4cm, text centered] (E) at (3,3) {$\{\Rightarrow m_a,$ \\ $\Rightarrow m_b\}$};
\node[text width=1.4cm, text centered] (F) at (4.5,1.5) {$\{\Rightarrow m_a,$\\ $\Rightarrow m_c\}$};
\node[text width=1.4cm, text centered] (G) at (1.5,4.5) {$\{\Rightarrow m_b,$\\ $\Rightarrow m_c\}$};

\node[text width=1.4cm, text centered] (H) at (4.5,4.5) {$\{\Rightarrow m_a,$\\ $\Rightarrow m_b,$\\ $\Rightarrow m_c\}$};

\draw[-,above] (A) to node {$a$} (B);
\draw[-,left] (A) to node {$b$} (C);
\draw[-,above] (A) to node {$c$} (D);

\draw[-, near start,left] (B) to node {$b$} (E);
\draw[-,above] (B) to node {$c$} (F);

\draw[-,near end, above] (C) to node {$a$} (E);
\draw[-,above] (C) to node {$c$} (G);

\draw[-,near start, above] (D) to node {$a$} (F);
\draw[-,near start, left] (D) to node {$b$} (G);

\draw[-,above] (E) to node {$c$} (H);
\draw[-,left] (F) to node {$b$} (H);
\draw[-,above] (G) to node {$a$} (H);

\end{tikzpicture}

\caption{\label{B-eSMC1} The relevant fragment of the base-extension version of the muddy children puzzle ahead of the father's announcements.}
\end{center}
\end{figure}

We, once again, model the father's announcement that some of the children are muddy by an announcement of $m_a \vee m_b \vee m_c$. 

As before, we would expect this to remove the emptyset base at which no child is muddy from the children's consideration and that the relations from the other bases to that base are removed. However, as it turns out this does not happen. Our construction of the new relation $\mathfrak{R}_A|(m_a\vee m_b\vee m_c)^{\{\Rightarrow m_a, \Rightarrow m_b\}}$ does not allow for it and the update is ineffective (as defined in Definition \ref{def:effective}. 

For Cath, for example, consider $\{\Rightarrow m_c\} \subset \{\Rightarrow m_b, \Rightarrow m_c \}$ and $\mathfrak{R}_c \{\Rightarrow m_b, \Rightarrow m_c \} \{\Rightarrow m_b\} $, so we would remove the relation between $\{\Rightarrow m_c\}$ and $\emptyset$ for $S_c|(m_a\vee m_b\vee m_c){\{\Rightarrow m_a, \Rightarrow m_b\}}$. Due to the construction of $T_c|(m_a\vee m_b\vee m_c){\{\Rightarrow m_a, \Rightarrow m_b\}}$, however,  the connection would be reestablished and we would have $T_c|(m_a\vee m_b\vee m_c){\{\Rightarrow m_a, \Rightarrow m_b\}}\{\Rightarrow m_c\} \emptyset$ and, finally, $\mathfrak{R}_c|(m_a\vee m_b\vee m_c){\{\Rightarrow m_a, \Rightarrow m_b\}}\{\Rightarrow m_c\} \emptyset$. Analogous for the other agents.

We see the result in Figure \ref{B-eSMC2}.

\begin{figure}[ht]
\begin{center}

\begin{tikzpicture}[-,shorten >=1pt,node distance =2cm, thick, baseline={([yshift=-1.8ex]current bounding box.center)}] 

\node (A) at (0,0) {$\emptyset$};

\node (C) at (0,3) {$\{\Rightarrow m_b\}$};
\node (D) at (1.5,1.5) {$\{\Rightarrow m_c\}$};
\node (B) at (3,0) {$\{\Rightarrow m_a\}$};

\node[text width=1.4cm, text centered] (E) at (3,3) {$\{\Rightarrow m_a,$ \\ $\Rightarrow m_b\}$};
\node[text width=1.4cm, text centered] (F) at (4.5,1.5) {$\{\Rightarrow m_a,$\\ $\Rightarrow m_c\}$};
\node[text width=1.4cm, text centered] (G) at (1.5,4.5) {$\{\Rightarrow m_b,$\\ $\Rightarrow m_c\}$};

\node[text width=1.4cm, text centered] (H) at (4.5,4.5) {$\{\Rightarrow m_a,$\\ $\Rightarrow m_b,$\\ $\Rightarrow m_c\}$};

\draw[-,red, above] (A) to node {$a$} (B);
\draw[-,red, left] (A) to node {$b$} (C);
\draw[-,red, above] (A) to node {$c$} (D);

\draw[-, near start,left] (B) to node {$b$} (E);
\draw[-,above] (B) to node {$c$} (F);

\draw[-,near end, above] (C) to node {$a$} (E);
\draw[-,above] (C) to node {$c$} (G);

\draw[-,near start, above] (D) to node {$a$} (F);
\draw[-,near start, left] (D) to node {$b$} (G);

\draw[-,above] (E) to node {$c$} (H);
\draw[-,left] (F) to node {$b$} (H);
\draw[-,above] (G) to node {$a$} (H);

\end{tikzpicture}

\caption{\label{B-eSMC2} The relevant fragment of the base-extension version of the muddy children puzzle after the father's announcements (i.e., $\mathfrak{R}_A|(m_a\vee m_b\vee m_c)^{\{\Rightarrow m_a, \Rightarrow m_b\}}$). Red lines show the relations we expected to eliminate but could not.}
\end{center}
\end{figure}

It is straightforward to see that Proposition \ref{prop:MuddyChildren1} fails because some of our bases are sub- or super-sets of each other. This is, however, only the most minimal example of this problem. For any set of bases in which one base corresponds to Bob being muddy and another to Anne and Bob being muddy, they will have shared a superset base (the union of those two bases) and condition (d) will force the failure of the example in the same way. 

This is because it does not suffice for $m_a$ to not hold at the base in which Bob is the only muddy child, in fact, we need for $\neg m_a$ and $\neg m_c$ to hold at that base. It is not enough for Anne is muddy and Cath is muddy to not hold, we need Anne and Cath to not be muddy. 

We update the definition of the minimal bases and get a correct version of Proposition \ref{prop:MuddyChildren1}.
}\else

\medskip 

\begin{prop}
\label{prop:MuddyChildren2}
       For a set of agents $A = \{a,b,c\}$ and any $N \in P(A)$ (i.e., the powerset of $A$), let $\mathscr{B}_N$ be any consistent base such that for all $j\in A$ with $m_j \in N$,  $\Vdash_{\mathscr{B}_{N}, \mathfrak{R}_{A}} m_j$ and for all $j\in A$ s.t. $m_j \notin N$, $\Vdash_{\mathscr{B}_{N}, \mathfrak{R}_{A}} m_j\to \bot$. For any update set $R_{A}$ with $A=\{a,b,c\}$ s.t. there are \begin{center}
$\mathscr{B}_\emptyset, \mathscr{B}_{\{a\}}, \mathscr{B}_{\{b\}}, \mathscr{B}_{\{c\}}, \mathscr{B}_{\{a,b\}}, \mathscr{B}_{\{a,c\}}, \mathscr{B}_{\{b,c\}}$ and $\mathscr{B}_{\{a,b,c\}}$
   \end{center}
   with 
\begin{center}
\begin{tabular}{lccr}
   $\mathfrak{R}_a \mathscr{B}_{\emptyset} \mathscr{B}_{\{a\}}$,  & $\mathfrak{R}_a \mathscr{B}_{\{b\}} \mathscr{B}_{\{a,b\}}$, & $\mathfrak{R}_a \mathscr{B}_{\{c\}} \mathscr{B}_{\{a,c\}}$, & $\mathfrak{R}_a \mathscr{B}_{\{b,c\}} \mathscr{B}_{\{a,b,c\}}$,  \\
   
       $\mathfrak{R}_b \mathscr{B}_{\emptyset} \mathscr{B}_{\{b\}}$, & $\mathfrak{R}_b \mathscr{B}_{\{a\}} \mathscr{B}_{\{a,b\}}$, & $\mathfrak{R}_b \mathscr{B}_{\{c\}} \mathscr{B}_{\{b,c\}}$, & $\mathfrak{R}_b \mathscr{B}_{\{a, c\}} \mathscr{B}_{\{a, b,c\}}$, \\

$\mathfrak{R}_c \mathscr{B}_{\emptyset} \mathscr{B}_{\{c\}}$, & $\mathfrak{R}_c \mathscr{B}_{\{a\}} \mathscr{B}_{\{a,c\}}$, & $\mathfrak{R}_c \mathscr{B}_{\{b\}} \mathscr{B}_{\{b,c\}}$, & $\mathfrak{R}_c \mathscr{B}_{\{a,b\}} \mathscr{B}_{\{a,b,c\}}$,\\
\end{tabular}
\end{center}
and for which these, together with the necessary relations to make these $S5$ (i.e., reflexive, transitive, and Euclidean), are the only relations involving these bases, 

  \begin{center}
      $\Vdash_{\mathscr{B}_{\{a,b\}}, \mathfrak{R}_{A}} [(m_a\vee m_b\vee m_c)] [\neg (K_a m_a\vee K_a \neg m_a) \wedge \neg (K_b m_b\vee K_b \neg m_b) \wedge \neg (K_c m_c\vee K_c \neg m_c)] (K_a m_a \wedge K_b m_b) $. 
  \end{center}

\end{prop}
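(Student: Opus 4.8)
The plan is to unpack the semantic clauses of Definition~\ref{PALValidity} for the two announcement operators, reduce the goal to a statement about which bases ``survive'' the updates, and then read off the two knowledge claims. First I would peel off the outer box operators: by the $[\phi]\psi$-clause, $\Vdash_{\mathscr{B}_{\{a,b\}},\mathfrak{R}_{A}}[\phi][\psi](K_am_a\wedge K_bm_b)$ holds iff for every $\mathscr{C}\supseteq\mathscr{B}_{\{a,b\}}$ with $\Vdash_{\mathscr{C},\mathfrak{R}_{A}}\phi$ and every $\mathscr{D}\supseteq\mathscr{C}$ with $\Vdash_{\mathscr{D},\mathfrak{R}_{A}}^{\phi}\psi$ we have $\Vdash_{\mathscr{D},\mathfrak{R}_{A}}^{\phi,\psi}(K_am_a\wedge K_bm_b)$, where $\phi=m_a\vee m_b\vee m_c$ and $\psi$ is the long ``nobody knows their status'' formula. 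Since $\Vdash m_a$ and $\Vdash m_b$ at $\mathscr{B}_{\{a,b\}}$ by hypothesis, $\phi$ is supported there and at all of its consistent supersets, so the first condition is non-vacuous at the base of interest. Using the easy ``both conjuncts'' direction for the classically defined $\wedge$, it suffices to show $\Vdash_{\mathscr{D},\mathfrak{R}_{A}}^{\phi,\psi}K_am_a$ and $\Vdash_{\mathscr{D},\mathfrak{R}_{A}}^{\phi,\psi}K_bm_b$; and since the hypotheses on the bases $\mathscr{B}_N$, on the relations $\mathfrak{R}_{A}$, and on $\phi,\psi$ are all symmetric under the swap $a\leftrightarrow b$ (the list of pinned relations is literally invariant under it), it is enough to treat $K_am_a$.

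The combinatorial heart is to determine the effect of the two updates. By Corollary~\ref{DELTAformula} I would replace the pair of announcements by the single formula $\theta:=\phi\wedge[\phi]\psi$, so that $\mathfrak{R}_{A}|(\phi;\psi)^{\mathscr{C}}=\mathfrak{R}_{A}|\theta^{\mathscr{C}}$. I then claim, for the named cube bases $\mathscr{B}_N$ and their consistent supersets, the B-eS analogue of the passage from Figure~\ref{KripkeMC2} to Figure~\ref{KripkeMC3}: (i) $\phi$ fails at exactly the consistent supersets of $\mathscr{B}_{\emptyset}$, because there $m_j\to\bot$ is supported for every $j$ while the base is consistent; (ii) after the $\phi$-update, at each of $\mathscr{B}_{\{a\}},\mathscr{B}_{\{b\}},\mathscr{B}_{\{c\}}$ the sole muddy agent's relation collapses to the reflexive loop, since its only cube-neighbour across the relevant coordinate is $\mathscr{B}_{\emptyset}$, now disconnected, and by the clause that the listed relations are the only ones involving the named bases there are no others; hence $K_jm_j$ becomes supported there and $\psi$ (hence $\theta$, via its $[\phi]\psi$ conjunct) fails at those three bases; (iii) at $\mathscr{B}_{\{a,b\}},\mathscr{B}_{\{a,c\}},\mathscr{B}_{\{b,c\}},\mathscr{B}_{\{a,b,c\}}$ each agent still has a $\phi$-surviving cube-neighbour disagreeing with it on its own coordinate, so no agent knows its mud-status and $\psi$ survives the $\phi$-update. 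Consequently $\theta$ is supported exactly at the consistent supersets of those four ``two-or-three muddy'' bases. Verifying (i)--(iii) is routine but needs Lemmas~\ref{PALMonotonicity}, \ref{PALBehaviour}, \ref{lem:MaxConOr} to pin down how $\vee,\neg,\wedge$ behave on the bases in play.

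It then remains to compute $\Vdash_{\mathscr{D},\mathfrak{R}_{A}}^{\phi,\psi}K_am_a$. By the $\Delta$-non-empty clause for $K_a$ and the atomic clause, this asks that for every $\mathscr{E}\supseteq\mathscr{D}$, every $\mathfrak{R}'_{A}\in\mathfrak{R}_{A}|\theta^{\mathscr{E}}$, and every $\mathscr{E}'$ with $\mathfrak{R}'_a\mathscr{E}\mathscr{E}'$, one has $m_a\in\overline{\mathscr{E}'}$. If $\mathscr{E}'$ is inconsistent this is immediate. Otherwise, $\mathfrak{R}'_a$ is an $S5$-modal relation (Lemma~\ref{PALrelationIsModal} for the canonical update of Definition~\ref{def:UpdateConstruction}, and in general because $\mathfrak{R}_A|\theta^{\mathscr{E}}$ consists of modal relations), so reflexivity forces $\Vdash_{\mathscr{E},\mathfrak{R}_{A}}\theta$, and condition~(d) of Definition~\ref{modalRelation} applied to $\mathscr{B}_{\{a,b\}}\subseteq\mathscr{E}$ yields an $\mathscr{F}\subseteq\mathscr{E}'$ with $\mathfrak{R}'_a\mathscr{B}_{\{a,b\}}\mathscr{F}$. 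The pinned-relation clause fixes the $\mathfrak{R}_a$-class of $\mathscr{B}_{\{a,b\}}$ to $\{\mathscr{B}_{\{a,b\}},\mathscr{B}_{\{b\}}\}$, and the characterization in (i)--(iii) above rules out $\mathscr{F}\supseteq\mathscr{B}_{\{b\}}$ (at a $\{b\}$-like base $K_bm_b$ is supported after $\phi$, so $\theta$ fails there), leaving $\mathscr{F}\supseteq\mathscr{B}_{\{a,b\}}$. Hence $m_a\in\overline{\mathscr{B}_{\{a,b\}}}\subseteq\overline{\mathscr{F}}\subseteq\overline{\mathscr{E}'}$, which is what we needed; the argument for $K_bm_b$ is the same after the $a\leftrightarrow b$ swap, and then the conjunction follows.

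The step I expect to be the main obstacle is exactly this last one: controlling the updated relation not merely at the single base $\mathscr{B}_{\{a,b\}}$ but at all of its consistent supersets and for \emph{every} effective update, not just the canonical construction of Definition~\ref{def:UpdateConstruction}. This is where one must carefully combine (a) condition~(d), which propagates accessibility-from-a-superset down to accessibility-from-$\mathscr{B}_{\{a,b\}}$; (b) the explicit hypothesis that the listed relations are the only ones involving the named bases, which pins the relevant $\mathfrak{R}_a$-class; and (c) the fact, established in the previous paragraph's item~(ii), that $\theta$ fails at the ``$\{b\}$-like'' bases, so that the only bad candidate for $\mathscr{F}$ is excluded. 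One also has to be slightly careful about exactly which bases are reachable by $\mathfrak{R}_A$ from a proper superset $\mathscr{E}$ of $\mathscr{B}_{\{a,b\}}$, which is precisely the asymmetry between conditions (c) and (d) that Definition~\ref{PALValidity}'s modified $K_a$-clause (``for all $\mathscr{C}\supseteq\mathscr{B}$'') was designed to accommodate; the reflexivity of $\mathfrak{R}'_a$, forcing $\Vdash_{\mathscr{E},\mathfrak{R}_A}\theta$ at the base one is sitting at, is the lever that makes the bookkeeping go through.
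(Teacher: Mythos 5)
Your overall route is the one the paper takes (and the paper only sketches it): show that the announcement of $\phi$ eliminates $\mathscr{B}_\emptyset$, that $\psi$ then fails at the three singleton bases after the $\phi$-update, and read off $K_am_a\wedge K_bm_b$ from the surviving square of bases; your use of Corollary~\ref{DELTAformula} to collapse the two announcements into $\theta=\phi\wedge[\phi]\psi$ and your items (i)--(iii) are exactly the content of Figures~\ref{B-eSMC3} and~\ref{B-eSMC4}, so on that part you supply more detail than the paper does.

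There is, however, a genuine gap at precisely the step you flag as the main obstacle. From $\mathfrak{R}'_a\mathscr{E}\mathscr{E}'$ you apply condition (d) of Definition~\ref{modalRelation} to the \emph{updated} relation $\mathfrak{R}'_a$ to obtain $\mathscr{F}\subseteq\mathscr{E}'$ with $\mathfrak{R}'_a\mathscr{B}_{\{a,b\}}\mathscr{F}$, and then you constrain $\mathscr{F}$ using the pinned $\mathfrak{R}_a$-class of $\mathscr{B}_{\{a,b\}}$ and the failure of $\theta$ at $\{b\}$-like bases. But those are facts about the original $\mathfrak{R}_a$ and about support relative to $\mathfrak{R}_A$; they do not constrain the $\mathfrak{R}'_a$-successors of $\mathscr{B}_{\{a,b\}}$. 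The biconditional in Definition~\ref{def:effective} ties $\mathfrak{R}'_a$ to $\mathfrak{R}_a$ only on pairs of bases that are $\mathfrak{R}_A$-reachable from $\mathscr{E}$, and when $\mathscr{E}$ properly extends $\mathscr{B}_{\{a,b\}}$ the base $\mathscr{B}_{\{a,b\}}$ is not reachable from $\mathscr{E}$ at all: the clause that the listed relations are the only ones involving the named bases excludes any $\mathfrak{R}$-step between an unnamed and a named base. So nothing pins down $\mathfrak{R}'_a$ at $\mathscr{B}_{\{a,b\}}$, and the exclusion of $\mathscr{F}\supseteq\mathscr{B}_{\{b\}}$ does not follow as written. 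The repair is to argue at the pair $(\mathscr{E},\mathscr{E}')$ itself: for $\mathscr{E}'$ consistent and $\mathfrak{R}_A$-reachable from $\mathscr{E}$, the effective-update biconditional gives $\mathfrak{R}_a\mathscr{E}\mathscr{E}'$ and $\Vdash_{\mathscr{E}',\mathfrak{R}_A}\theta$; then condition (d) applied to the \emph{original} $\mathfrak{R}_a$ together with the pinned class forces $\mathscr{E}'$ to extend $\mathscr{B}_{\{a,b\}}$ or $\mathscr{B}_{\{b\}}$, and $\theta$-support excludes the latter, yielding $m_a\in\overline{\mathscr{E}'}$ directly; this is exactly how the paper handles the analogous point in Proposition~\ref{prop:cardgame}. (A residual caveat, which affects the paper's own informal argument as much as yours: Definition~\ref{def:effective} literally leaves $\mathfrak{R}'_a$ unconstrained on successors $\mathscr{E}'$ that are not $\mathfrak{R}_A$-reachable from $\mathscr{E}$, so both arguments tacitly assume effective updates introduce no such successors.)
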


It is straightforward to see that for any $\mathscr{B}_\emptyset$, we have $\nVdash_{\mathscr{B}_\emptyset, R_A} \phi$ and so it is eliminated in the way we expect by the announcement of $\phi$ (Figure \ref{B-eSMC3}).


\begin{center}
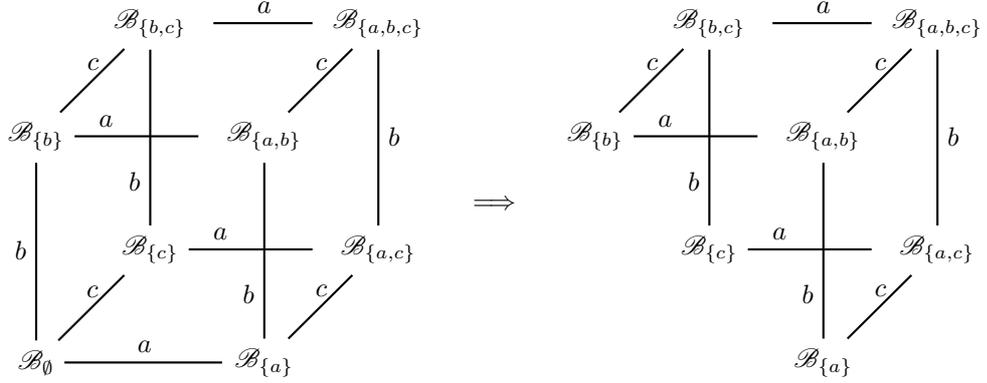
\begin{figure}
\begin{tabular}{c p{.7cm} c}
\begin{tikzpicture}[-,shorten >=1pt,node distance =1cm, thick, baseline={([yshift=-1.8ex]current bounding box.center)}] 

\node (A) at (0,0) {$\mathscr{B}_\emptyset$};

\node (C) at (0,3) {$\mathscr{B}_{\{b\}}$};
\node (D) at (1.5,1.5) {$\mathscr{B}_{\{c\}}$};
\node (B) at (3,0) {$\mathscr{B}_{\{a\}}$};

\node[text width=1.4cm, text centered] (E) at (3,3) {$\mathscr{B}_{\{a,b\}}$};
\node[text width=1.4cm, text centered] (F) at (4.5,1.5) {$\mathscr{B}_{\{a,c\}}$};
\node[text width=1.4cm, text centered] (G) at (1.5,4.5) {$\mathscr{B}_{\{b,c\}}$};

\node[text width=1.4cm, text centered] (H) at (4.5,4.5) {$\mathscr{B}_{\{a,b,c\}}$};

\draw[-,above] (A) to node {$a$} (B);
\draw[-,left] (A) to node {$b$} (C);
\draw[-,above] (A) to node {$c$} (D);

\draw[-, near start,left] (B) to node {$b$} (E);
\draw[-,above] (B) to node {$c$} (F);

\draw[-,near start, above] (C) to node {$a$} (E);
\draw[-,above] (C) to node {$c$} (G);

\draw[-,near start, above] (D) to node {$a$} (F);
\draw[-,near start, left] (D) to node {$b$} (G);

\draw[-,above] (E) to node {$c$} (H);
\draw[-,right] (F) to node {$b$} (H);
\draw[-,above] (G) to node {$a$} (H);

\end{tikzpicture}

&

$\Longrightarrow$

&

\begin{tikzpicture}[-,shorten >=1pt,node distance =1cm, thick, baseline={([yshift=-1.8ex]current bounding box.center)}] 


\node (C) at (0,3) {$\mathscr{B}_{\{b\}}^+$};
\node (D) at (1.5,1.5) {$\mathscr{B}_{\{c\}}^+$};
\node (B) at (3,0) {$\mathscr{B}_{\{a\}}^+$};

\node[text width=1.4cm, text centered] (E) at (3,3) {$\mathscr{B}_{\{a,b\}}^+$};
\node[text width=1.4cm, text centered] (F) at (4.5,1.5) {$\mathscr{B}_{\{a,c\}}^+$};
\node[text width=1.4cm, text centered] (G) at (1.5,4.5) {$\mathscr{B}_{\{b,c\}}^+$};

\node[text width=1.4cm, text centered] (H) at (4.5,4.5) {$\mathscr{B}_{\{a,b,c\}}^+$};


\draw[-, near start,left] (B) to node {$b$} (E);
\draw[-,above] (B) to node {$c$} (F);

\draw[-,near start, above] (C) to node {$a$} (E);
\draw[-,above] (C) to node {$c$} (G);

\draw[-,near start, above] (D) to node {$a$} (F);
\draw[-,near start, left] (D) to node {$b$} (G);

\draw[-,above] (E) to node {$c$} (H);
\draw[-,right] (F) to node {$b$} (H);
\draw[-,above] (G) to node {$a$} (H);

\end{tikzpicture}
\end{tabular}
\caption{\label{B-eSMC3}$S^*_A$ on the left and $S^*_A|\phi^{\mathscr{B}_{\{a,b\}}}$ on the right (Definition \ref{def:UpdateS}). Given the new requirement in Proposition \ref{prop:MuddyChildren2}, the announcement of $\phi$ goes as expected.}
\end{figure}    
\end{center}

Similarly, $\psi$ will fail at the bases $\mathscr{B}^+_{\{a\}}$, $\mathscr{B}^+_{\{b\}}$, and $\mathscr{B}^+_{\{c\}}$ after the announcement of $\phi$ as the child that is muddy at them will know that they are muddy. This leaves us with the final result in Figure \ref{B-eSMC4}. Importantly (and as expected), $\Vdash_{\mathscr{B}_{\{a,b\}}, \mathfrak{R}_{A}} [\phi] [\psi] (K_a m_a \wedge K_b m_b) $. 

\begin{center}
\begin{figure}
\begin{tabular}{c p{.7cm} c}
\begin{tikzpicture}[-,shorten >=1pt,node distance =1cm, thick, baseline={([yshift=-1.8ex]current bounding box.center)}] 


\node (C) at (0,3) {$\mathscr{B}_{\{b\}^+}$};
\node (D) at (1.5,1.5) {$\mathscr{B}_{\{c\}}^+$};
\node (B) at (3,0) {$\mathscr{B}_{\{a\}}^+$};

\node[text width=1.4cm, text centered] (E) at (3,3) {$\mathscr{B}_{\{a,b\}}^+$};
\node[text width=1.4cm, text centered] (F) at (4.5,1.5) {$\mathscr{B}_{\{a,c\}}^+$};
\node[text width=1.4cm, text centered] (G) at (1.5,4.5) {$\mathscr{B}_{\{b,c\}}^+$};

\node[text width=1.4cm, text centered] (H) at (4.5,4.5) {$\mathscr{B}_{\{a,b,c\}}^+$};


\draw[-, near start,left] (B) to node {$b$} (E);
\draw[-,above] (B) to node {$c$} (F);

\draw[-,near start, above] (C) to node {$a$} (E);
\draw[-,above] (C) to node {$c$} (G);

\draw[-,near start, above] (D) to node {$a$} (F);
\draw[-,near start, left] (D) to node {$b$} (G);

\draw[-,above] (E) to node {$c$} (H);
\draw[-,right] (F) to node {$b$} (H);
\draw[-,above] (G) to node {$a$} (H);

\end{tikzpicture}

&

$\Longrightarrow$

&

\begin{tikzpicture}[-,shorten >=1pt,node distance =1cm, thick, baseline={([yshift=-1.8ex]current bounding box.center)}] 



\node[text width=1.4cm, text centered] (E) at (3,3) {$\mathscr{B}_{\{a,b\}}^{++}$};
\node[text width=1.4cm, text centered] (F) at (4.5,1.5) {$\mathscr{B}_{\{a,c\}}^{++}$};
\node[text width=1.4cm, text centered] (G) at (1.5,4.5) {$\mathscr{B}_{\{b,c\}}^{++}$};

\node[text width=1.4cm, text centered] (H) at (4.5,4.5) {$\mathscr{B}_{\{a,b,c\}}^{++}$};





\draw[-,above] (E) to node {$c$} (H);
\draw[-,right] (F) to node {$b$} (H);
\draw[-,above] (G) to node {$a$} (H);

\end{tikzpicture}
\end{tabular}
\caption{\label{B-eSMC4}$S^*_A|\phi^{\mathscr{B}_{\{a,b\}}}$ on the left and $S^*_A|(\phi,\psi)^{\mathscr{B}_{\{a,b\}}}$ on the right (Definition \ref{def:UpdateS}). The announcement of $\psi$ also functions as intended.}

\end{figure}
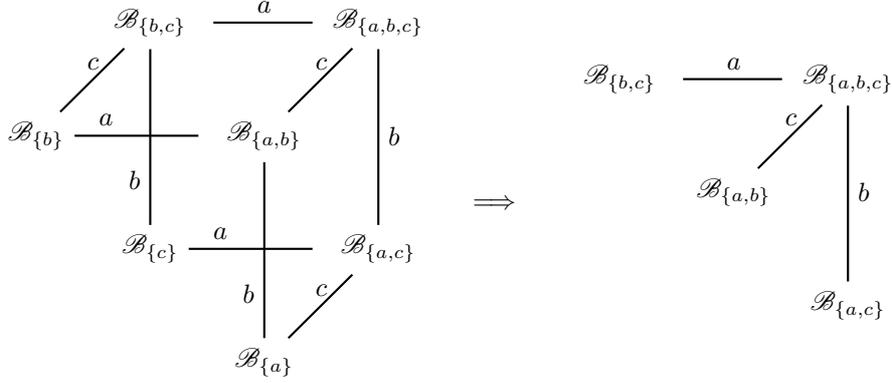    
\end{center}

It should be reiterated here that this only proves that the conditions in Proposition \ref{prop:MuddyChildren2} guarantee that the muddy children puzzle goes through in our P-tS-based setting. It does not prove that these are in fact \emph{minimal} requirements in the strong sense. There can be other requirements, either weaker or simply different, that will also have the desired results. We do contend, however, that these conditions are at least the most natural restrictions to put on the bases and directly represent the situation as given in the description of the example.

We suggest that the need for imposing such conditions in the P-tS-based analysis of the muddy children puzzle is wholly consistent with an inferentialist position in which we expect to make assumptions explicit (cf. the position presented in \cite{Brandom1994}).

\subsection{Information and Intensionality}
\label{sec:Intensionality}

When trying to analyse information through examples like the card game and the muddy children, it is important to identify exactly what it is that allows the agents to draw their conclusions. This means that in the context of public announcements, we must identify exactly what about the announcement and the context in which it was given/received allows the agents to make their specific inferences. 

In Section \ref{sec:Examples}, we can see that the way in which worlds in the Kripke semantics modelling each of these examples have been chosen is purely extensional. For the card game, we simply picked worlds at which the card-draw lined up with the example. Similarly, in the muddy children example, we picked a world that corresponds to each configuration of the muddy children for that example. In fact, worlds are inherently extensional in this way because, ultimately, the truth of a formula is based on such abstract 
interpretations. 

Our inferentialist analysis aims to achieve a deeper understanding of the information involved in these examples, characterized by drawing on the intensional character of bases. As we have already mentioned at the beginning of this section, the two examples highlight two ways in which bases and base rules can be used. After going through the examples, we want to restate these two ways and connect them with an intensional approach to information. 

In the card game example, we specifically \emph{do not} just consider bases at which the rules of the game hold, but rather construct bases that have base rules 
which directly represent those rules of the game. This allows us to be able to say not just that these rules hold, as is the case in the Kripke model, but exactly \emph{why} they hold. 

In both examples, we give a minimal set-up that tells us that any extension of this set-up will allow the example to go through as expected. This is especially highlighted in the muddy children example, in which the example fails for a specific set of requirements. This allows us to identify exactly what bases and relations are required for the example to go through and so what information must be encoded in the model for the agent to make the expected inferences. Although maybe not as obviously intensional in character as implementing the rules directly, as is done in the card game, any minimal instantiation of this set-up by specific base rules will say a child is muddy (or not) is supported exactly because of those base rules. Furthermore, at any extension of such a minimal instantiation, that support persists because of the presence of those same base rules.\footnote{Of course, for any extension that is also an extension of another minimal set-up, there are now two reasons for the formula to be supported. In this sense, smaller bases are `more' intensional than larger ones. At the extreme, maximally-consistent bases, which can be considered to amount to Kripke worlds, contain all the ways that cause the formula to be supported and, therefore, are extensional in character.}

\section{Future Work} \label{sec:conclusion}

We view this work as a first step in an inferentialist analysis of the group of dynamic epistemic logics. Public announcement logic is the simplest of these logics. The natural next step is to look at action model logics (AML). In our setting, public announcement leave the bases unchanged and only change the modal relations. This means that a base before an action is the same as after an action. In AML, there can be multiple possible actions that the agents consider could have taken place. So, a base before an action would have to correspond to multiple bases after the action. We believe, our semantic can be generalized to further extend the language with propositional atoms that mark the action that has taken place. However we also suggest an approach that instead allows the agents to consider different modal relations, one for each possible action they consider. In this sense the agents uncertainty about what action actually took place is modelled by their uncertainty in what are the correct modal relations. 

There are also some interesting aspects of (static) epistemic logic and PAL that still need to be analysed from the inferentialist perspective. We have yet to discuss group notions of knowledge, such as distributed knowledge or common knowledge. Common knowledge is especially interesting in the context of PAL, as there is a close association between it and public announcements. Another notion relevant to PAL is unsuccessful update; that is,  announcements of formulae that become false when they are announced. All this analysis is important in identifying the informational content of announcements. See \cite{Ditmarsch2007} for an overview on these concepts in the context of dynamic epistemic logic in general.

We suggest that the future work discussed in this section is instrumental in providing an inferentialist analysis of information in certain circumstances. Specifically, we conjecture that the inferentialist view of meaning as being derived from use --- in particular, from inference in P-tS --- provides a fruitful perspective on the concept of information itself, not only from a foundational perspective, but also as it occurs in settings such as informatics and technology. While this work and the directions for future work discussed above are useful in their own terms, they also provide some first steps towards an inferentialist account of information.


\iffalse{

OLD


\begin{figure}[ht]
\begin{center}

\begin{tikzpicture}[-,shorten >=1pt,node distance =2cm, thick, baseline={([yshift=-1.8ex]current bounding box.center)}] 

\node[text width=1cm, text centered] (A) at (0,0) {{\small$\{\neg m_a,$\\ $\neg m_b$, \\ $\neg m_c \}$}};

\node[text width=1cm, text centered] (C) at (0,3) {{\small$\{\neg m_a,$ \\$m_b,$\\ $\neg m_c \}$}};
\node[text width=1cm, text centered] (D) at (1.5,1.5) {{\small$\{\neg m_a,$ \\$ \neg m_b,$ \\$m_c\}$}};
\node[text width=1cm, text centered] (B) at (3,0) {{\small$\{ m_a$ \\$ \neg m_b,$ \\$\neg m_c \}$}};

\node[text width=1cm, text centered] (E) at (3,3) {{\small$\{m_a,$ \\ $m_b$\\ $\neg m_c\}$}};
\node[text width=1cm, text centered] (F) at (4.5,1.5) {{\small$\{m_a,$ \\ $\neg m_b,$ \\$m_c\}$}};
\node[text width=1cm, text centered] (G) at (1.5,4.5) {{\small$\{\neg m_a$\\ $m_b,$\\ $m_c\}$}};

\node[text width=1.8cm, text centered] (H) at (4.5,4.5) {{\small$\{m_a,$\\ $m_b,$\\ $m_c\}$}};

\draw[-,above] (A) to node {$a$} (B);
\draw[-,left] (A) to node {$b$} (C);
\draw[-,above] (A) to node {$c$} (D);

\draw[-, near start,left] (B) to node {$b$} (E);
\draw[-,above] (B) to node {$c$} (F);

\draw[-,near end, above] (C) to node {$a$} (E);
\draw[-,above] (C) to node {$c$} (G);

\draw[-,near start, above] (D) to node {$a$} (F);
\draw[-,near start, left] (D) to node {$b$} (G);

\draw[-,above] (E) to node {$c$} (H);
\draw[-,left] (F) to node {$b$} (H);
\draw[-,above] (G) to node {$a$} (H);

\end{tikzpicture}

\caption{\label{B-eSMC4} The relevant fragment of the base-extension version of the muddy children puzzle ahead of the father's announcements with the correct bases.}
\end{center}
\end{figure}

\begin{figure}[ht]
\begin{center}

\begin{tikzpicture}[-,shorten >=1pt,node distance =2cm, thick, baseline={([yshift=-1.8ex]current bounding box.center)}] 


\node[text width=1cm, text centered] (C) at (0,3) {{\small$\{\neg m_a,$ \\$m_b,$\\ $\neg m_c \}$}};
\node[text width=1cm, text centered] (D) at (1.5,1.5) {{\small$\{\neg m_a,$ \\$ \neg m_b,$ \\$m_c\}$}};
\node[text width=1cm, text centered] (B) at (3,0) {{\small$\{ m_a$ \\$ \neg m_b,$ \\$\neg m_c \}$}};

\node[text width=1cm, text centered] (E) at (3,3) {{\small$\{m_a,$ \\ $m_b$\\ $\neg m_c\}$}};
\node[text width=1cm, text centered] (F) at (4.5,1.5) {{\small$\{m_a,$ \\ $\neg m_b,$ \\$m_c\}$}};
\node[text width=1cm, text centered] (G) at (1.5,4.5) {{\small$\{\neg m_a$\\ $m_b,$\\ $m_c\}$}};

\node[text width=1.8cm, text centered] (H) at (4.5,4.5) {{\small$\{m_a,$\\ $m_b,$\\ $m_c\}$}};


\draw[-, near start,left] (B) to node {$b$} (E);
\draw[-,above] (B) to node {$c$} (F);

\draw[-,near end, above] (C) to node {$a$} (E);
\draw[-,above] (C) to node {$c$} (G);

\draw[-,near start, above] (D) to node {$a$} (F);
\draw[-,near start, left] (D) to node {$b$} (G);

\draw[-,above] (E) to node {$c$} (H);
\draw[-,left] (F) to node {$b$} (H);
\draw[-,above] (G) to node {$a$} (H);

\end{tikzpicture}

\caption{\label{B-eSMC5} The relevant fragment of the base-extension version of the muddy children puzzle after of the father's announcements with the correct bases. Bases no longer considered are removed for readability.}
\end{center}
\end{figure}

\begin{figure}[ht]
\begin{center}

\begin{tikzpicture}[-,shorten >=1pt,node distance =2cm, thick, baseline={([yshift=-1.8ex]current bounding box.center)}] 



\node[text width=1cm, text centered] (E) at (3,3) {{\small$\{m_a,$ \\ $m_b$\\ $\neg m_c\}$}};
\node[text width=1cm, text centered] (F) at (4.5,1.5) {{\small$\{m_a,$ \\ $\neg m_b,$ \\$m_c\}$}};
\node[text width=1cm, text centered] (G) at (1.5,4.5) {{\small$\{\neg m_a$\\ $m_b,$\\ $m_c\}$}};

\node[text width=1.8cm, text centered] (H) at (4.5,4.5) {{\small$\{m_a,$\\ $m_b,$\\ $m_c\}$}};





\draw[-,above] (E) to node {$c$} (H);
\draw[-,left] (F) to node {$b$} (H);
\draw[-,above] (G) to node {$a$} (H);

\end{tikzpicture}

\caption{\label{B-eSMC6} The relevant fragment of the base-extension version of the muddy children puzzle when the children solve it after the father's second announcement. Bases no longer considered are removed for readability.}
\end{center}
\end{figure}

}\else

\subsection*{Acknowledgements} 

This work has been partially supported by the UK EPSRC grants EP/S013008/1 and EP/R006865/1. We are grateful to Gabriele Brancati, Tim Button, Yll Buzoku, Alex Gheorghiu, Tao Gu, Timo Lang, Elaine Pimentel, and Will Stafford  for helpful discussions of this work.

\bibliographystyle{plain}
	\bibliography{DELpts}

\appendix

 \section{Proof Details Elided in Main Text}
    \label{App1}
    \medskip

    This construction is an adaptation of the proof of Soundness for $S5$ base-extension semantics see \cite{EckhardtPymS5}. The fundamental strategy, to give a construction that takes a model with a world at which a formula is not true and gives us bases and a set of relations such that the formula does not hold at one of these bases, remains unchanged. We have to add some more detail to ensure the resulting set of relations is an update set as defined in Definition \ref{def:UpdateRelations} and, of course, show that it, in fact, results in an update set.


     We assume an $S5$ model $M = \langle F,V\rangle$ with $F = \langle W,R_A\rangle$, $R_A$ is a set of relation $R_a$, and a world $w$ s.t. $M,w\nvDash \phi$. We now implement the steps of the proof sketch.
    
    In a model, we can have worlds with the same valuation but not maximally-consistent bases that agree on all atomic formulae. So we need to adapt our $M$ so that every world disagrees on some atomic formulae that are not relevant to $\phi$. We do so by assigning a different atomic formula $p_i$ that does not appear in $\phi$ to each world. This is possible as we have infinitely many atomic formulae. For every $v\in W$, let $p_v$ be some atomic formula s.t. it does not appear in $\phi$ and, for all worlds $u$, if $u\neq v$, then $p_v \neq p_u$. Additionally, let $p_\emptyset$ be some atomic formula s.t. it does not appear in $\phi$ and for all $v\in W$, $p_\emptyset\neq p_v$. The idea is that we want $p_v$ to hold at all worlds except $v$ and $p_\emptyset$ to hold at no worlds.

    Let $M' = \langle F, V'\rangle$ be s.t. for all $v\in W$, $V'(p_v) = W\setminus\{v\}$, $V'(p_\emptyset)=\emptyset$, and $V'(p) = V(p)$ for all other atomic formulae $p$. Note that $M', w \nvDash \phi$. This concludes our first step.

    This model is used to construct bases that correspond to its worlds. For every world $w\in W$, we define a base $\mathscr{A}_w$ in the following way:

    \[
    \begin{split}
    \mathscr{A}_w := & \{\Rightarrow p : M',w\vDash p\} \cup \\
                 & \{\Rightarrow p_v : v \in W \text{ and } v\neq w\} \cup \\
                 & \{p\Rightarrow p_w, p_w\Rightarrow p : M',w\nvDash p\} \cup\\
                 & \{p\Rightarrow p_\emptyset, p_\emptyset\Rightarrow p : M',w\nvDash p\} \\
    \end{split}
    \]

    By Lemma \ref{lem: ModalMaxCon}, we know there is a maximally-consistent $\mathscr{C}\supseteq\mathscr{A}_w$ s.t. $\Vdash_{\mathscr{C}} q_w$. We call this $\mathscr{C}$ $\mathscr{B}_w$.


    We define a function $\delta_W$ that tells us for every base which subset bases it shares with the bases that correspond to our worlds (i.e., if a base $\mathscr{B}$ has a subset that is also a subset of $\mathscr{B}_w$ and $\mathscr{B}_v$ but no other $\mathscr{B}_u$, then $\{w,v\}\in \delta_W(\mathscr{B)}$).

    \medskip 

    \begin{definition}
        \label{Def:delta}

        For a base $\mathscr{B}$ and a set of worlds $W$, 
        
        \[ 
        \delta_W(\mathscr{B}) =\left\{ \{w_1,\dots,w_j\}\middle\vert \begin{array}{l}
        \{w_1,\dots,w_j\} \subseteq W \text{ and there is } \mathscr{B}'\subseteq\mathscr{B} \text{ s.t.}\\
        \text{for all }w_i \in \{w_1,\dots,w_j\}, \mathscr{B}'\subseteq\mathscr{B}_{w_i}
              
        \end{array}\right\}
        \]
        \fillBox 
        \end{definition} \medskip


    We can show that for any possible outcome of $\delta_W$ there exist a base $\mathscr{B}$ given our construction. Note, that any set that does not include $W$ itself is not possible as every base is a superset of the $\emptyset$-base.

\medskip 

    \begin{lemma}
        \label{lem:ExistsDelta}

        For any set $\Delta \subseteq P(W)$ with $W\in \Delta$ there is a base $\mathscr{B}$ s.t. $\delta_W(\mathscr{B}) = \Delta$.
        
    \end{lemma}
    \begin{proof}
        We proof this giving construction rules for these bases. Let $\mathscr{B}$ be the set with for all $\{w_1,\dots,w_n\} \in \Delta$ with $n>1$, $(p_{w_1},\dots, p_{w_n} \Rightarrow p_\emptyset) \in \mathscr{B}$ and no other rules.

        Note that the base containing the single rule $(p_{w_1},\dots, p_{w_n} \Rightarrow p_\emptyset)$ is a subset of $\mathscr{B}_{w_1}, \dots, \mathscr{B}_{w_n}$ but no other $\mathscr{B}_{v}$.
\end{proof}
    
    $\mathfrak{R}_A$ is the set of modal relations such that, for every agent $a\in A$, $\mathfrak{R}_a \mathscr{B}\mathscr{C}$ only by the following:

\begin{enumerate}[label=(\arabic{enumi})]
    \item[—]if $R_a wv$, then $\mathfrak{R}_a\mathscr{B}_w\mathscr{B}_v$
    \item[—]if $\mathscr{B}$ and $\mathscr{C}$ are inconsistent, then $\mathfrak{R}_a\mathscr{B}\mathscr{C}$
    
    
    \item[—]If $\mathscr{B}$ and $\mathscr{C}$ \begin{itemize}
        \item[—]are consistent,
        \item[—]there is no $w\in W$ s.t. $\mathscr{B}=\mathscr{B}_w$ or $\mathscr{C}=\mathscr{B}_w$,
        \item[—]there is a $w \in W$ s.t. $\mathscr{B}\subseteq\mathscr{B}_w$ iff there is a $v\in W$ s.t. $\mathscr{C}\subseteq\mathscr{B}_v$, 

        \item[—]there is a rule $r\in\mathscr{B}$ s.t. $r\notin\mathscr{B}_w$ for all $w\in W$ iff there is a rule $r'\in\mathscr{C}$ s.t. $r'\notin\mathscr{B}_w$ for all $w\in W$.
        
        \item[—]there are bijective functions $f: \delta_W(\mathscr{B}) \to \delta_W(\mathscr{C})$ and $g: W \rightarrow W$ s.t. for every $\delta \in \delta_W(\mathscr{B)}$ and $w\in \delta$, $g(w)\in f(\delta)$ and $R_a w g(w)$, 


    \end{itemize}
         then $\mathfrak{R}_a\mathscr{B}\mathscr{C}$. 

\end{enumerate}

\vspace{.5cm}

    We start by showing that the resulting relations are equivalence relations on an arbitrary $\mathscr{R}_a$. Note that this trivially holds for all relations added by (2). 
    
    For reflexivity, it suffices to point out that $R_a$ was reflexive and so for any $\mathscr{B}_w$, we have $\mathfrak{R}_a \mathscr{B}_w\mathscr{B}_w$. For all other bases note that we can simply take functions $f(\delta)=\delta$ and $g(w)=w$ and so $\mathfrak{R}_a\mathscr{B}\mathscr{B}$. 
    
    For transitivity, from $\mathfrak{R_A}\mathscr{B}_w\mathscr{B}_v$ and $\mathfrak{R_A}\mathscr{B}_v\mathscr{B}_u$, it follow that $\mathfrak{R_A}\mathscr{B}_w\mathscr{B}_u$, simply by the transitivity of $R_a$. For the other cases note that there are functions $f$ from $\delta_W(\mathscr{B}$ to $\delta_W(\mathscr{C}$ and $f'$ $\delta_W(\mathscr{C}$ to $\delta_W(\mathscr{D}$, so we can take the new function $f'':\delta_w(\mathscr{B)\rightarrow\delta_w(\mathscr{D)}}$ with $f''(\delta)=f'(f(\delta))$ and function $g$ and $g'$,  let $g''(w)=g'(g(w))$. So, by (3) $\mathfrak{R}_a\mathscr{B}\mathscr{D}$.
    
    The Euclidean case follows analogously.

    To show this is an update relation we go through the steps (1)-(3) and show that none of these steps add relations that violate the condition of update sets. As step (2) is the only one concerning inconsistent bases and remains unchanged from our proof in \cite{EckhardtPymS5}, conditions (a) and (b) hold trivially.
    

    We proceed to show (c) and (d).

    For condition (c), take some $\mathscr{R}_a \mathscr{B}\mathscr{C}$. There are three cases to consider. 

    If $\mathscr{B}=\mathscr{B}_w$ and $\mathscr{C}=\mathscr{B}_v$ for some $w$ and $v$, then any consistent $\mathscr{B}'=\mathscr{B}_w$ and this holds trivially.

    If $\mathscr{B}'=\mathscr{B}_w$ for some $w$, then there is a $\mathscr{B}_v\supseteq\mathscr{C}$ for some $v$ s.t. $R_a wv$ and so $\mathfrak{R}_a\mathscr{B}_w\mathscr{B}_v$.

    For all other $\mathscr{B}$ and $\mathscr{B}'$, $\mathfrak{R}_a\mathscr{B}\mathscr{C}$ has to hold by step (3) and so there are $f$ and $g$ we can adapt. Additionally, $\delta_W(\mathscr{B})\subseteq \delta_W(\mathscr{B}')$. So, we can define an $f'$ and $g'$ based on them. For all $\mathscr{D}$, let $g'(\mathscr{D})=g(\mathscr{D})$ and $f'(\delta)=\{g'(\mathscr{D})|\mathscr{D}\in\delta\}$. So, for any $\mathscr{C}'\supseteq\mathscr{C}$ s.t. $\delta_W(\mathscr{C}') = \{f'(\delta)|\delta\in\delta_W(\mathscr{B}')\}$, we have $\mathfrak{R}_a \mathscr{B}'\mathscr{C}'$. We obtain such a $\mathscr{C}'$ by simply adding rules to $\mathscr{C}$ that correspond to those $\delta$ s.t. $\delta\in\delta_W(\mathscr{C}')$ but $\delta\notin\delta_W(\mathscr{C})$.

    Condition (d) follows if we straightforwarldy modify the approach for all other $\mathscr{B}$ and $\mathscr{B}'$ to deal with subsets rather than supersets. We simply remove rules rather than adding them.

    Now we need to show that $\mathfrak{R}_A$ is an update set.
    
    To show that for every base $\mathscr{B}$ there is a $\pi_\mathscr{B}$, let $\Delta$ be the set of all $\delta_W$ s.t. there are bijective functions $f: \delta_W(\mathscr{B}) \to \delta_W$ and $g: W \rightarrow W$ s.t. for every $\delta \in \delta_W(\mathscr{B)}$ and $w\in \delta$, $g(w)\in f(\delta)$ and $R_a w g(w)$. Let $\pi= \{\pi(\delta_W)| \delta_W\in \Delta\}$ and $\pi(\delta_W)$ the set of all $\mathscr{C}$ reachable from $\mathscr{B}$ s.t. $\delta_W(\mathscr{C}) = \delta_W$. $pi_\mathscr{B}$ is clearly finite as there are only a finite amount of worlds in $W$, the $\pi(\delta_W)$ only contain members of $\Omega_P$ and for all $\mathscr{C}$, $\mathscr{C}$ is reachable by $\mathscr{B}$ iff there is a $\pi(\delta_W)$ s.t. $\mathscr{C} \in \pi(\delta_W)$. 
    
    To show that any $\pi(\delta_W)$ only contains bases that are reachable from each other via the sub/superset take any $\mathscr{C}$ and $\mathscr{C}'$ in $\pi(\delta_W)$ and let $\mathscr{C}^+ = \mathscr{C}\cup\mathscr{C}'$. If $\mathscr{C}= \mathscr{C}'$ this is trivial. There are now two cases to consider: If it is not the case that $\mathscr{C}^+ = \mathscr{B}_w$ for some $w$, then simply $\mathscr{C}^+ \in \pi(\delta_W)$ and as it is the superset of both $\mathscr{C}$ and $\mathscr{C}'$ we are done. Otherwise, take some rule $r$ that is in $\mathscr{C}$ but not in $\mathscr{C}'$. The base $(\mathscr{C}^+ - r)$ is then a superset of $\mathscr{C}'$. Now we remove a rule $r'$ that is in $\mathscr{C}'$ but not in $\mathscr{C}$ and get $((\mathscr{C}^+-r)-r')$ adding $r$ back in results in $(\mathscr{C}^+-r')$ which is a superset of $\mathscr{C}$. Obviously, $((\mathscr{C}^+-r)-r')$ is a subset of both $(\mathscr{C}^+-r)$ and $(\mathscr{C}^+ -r')$. As we have only manipulated rules of $\mathscr{C}$ and $\mathscr{C}'$, $(\mathscr{C}^+ -r)$, $((\mathscr{C}^+ - r) - r')$, and $(\mathscr{C}^+ -r')$ are members of $\pi(\delta_W)$ and so $\mathscr{C}'$ can be reached from $\mathscr{C}$ through them.
    
    
    So, finally, we show that for $\mathscr{B}'\subseteq\mathscr{B}$ s.t. $\mathscr{B}'$ is reachable from $\mathscr{B}$, $\mathscr{B}$ and $\mathscr{B}'$ share access to the same bases, that is, we have an update set.

    Step (1) only add relations between maximally-consistent sets. Two maximally-consistent sets can only be subsets of each other if they are the same set and so also share the same bases they have access to.

   Step (2) only adds relations between inconsistent sets and our update condition only restrict consistent sets.


    Finally, we look at the (3) steps. 
    Note that, for any sub- or superset of a base it either has the same $\delta_W$ or one with a higher or lower length. If the length is different, the super-/subset cannot be reachable from the inital base as $(3)$ requires the length to be equal for two bases to be connected, as it requires a bijective function between them, and if it's the same, then the bases get connected to the same bases by $(3)$.
    

    This proves that the set of relations resulting from this construction is an update set and so the base-extension semantics for multi-agent $S5$-modal logic based on update sets is sound.
   
\end{document}